\theoremstyle{plain}
\newtheorem{theorem}{Theorem}[section]
\newtheorem{proposition}[theorem]{Proposition}
\newtheorem{lemma}[theorem]{Lemma}
\newtheorem{corollary}[theorem]{Corollary}
\theoremstyle{definition}
\newtheorem{definition}[theorem]{Definition}
\newtheorem{remark}[theorem]{Remark}
\newtheorem{notation}[theorem]{Notation}
\def\N{\mathbb{N}}
\def\P{\mathbb{P}}
\title[Differentiable, Holomorphic, and Analytic Functions]{
	Differentiable, Holomorphic, and Analytic Functions on Complex $\Phi$-Algebras}
\author{M. Roelands}
\address{Mathematical Institute, Leiden University, 2300 RA Leiden,
	The Netherlands}
\email{m.roelands@math.leidenuniv.nl}
\author{C. Schwanke}
\address{Department of Mathematics and Applied Mathematics, University of Pretoria, Private Bag X20, Hatfield 0028, South Africa}
\email{cmschwanke26@gmail.com}
\date{\today}
\subjclass[2020]{Primary: 46A40; Secondary: 40J05}
\keywords{differentiation, complex vector lattice, analytic function, holomorphic function}
\begin{document}
	\def\sect#1{\section*{\leftline{\large\bf #1}}}
	\def\th#1{\noindent{\bf #1}\bgroup\it}
	\def\endth{\egroup\par}
	
	\begin{abstract}
		Using the notion of order convergent nets, we develop an order-theoretic approach to differentiable functions on Archimedean complex $\Phi$-algebras. Most notably, we improve the Cauchy-Hadamard formulas for universally complete complex vector lattices given by both authors in a previous paper in order to prove that analytic functions are holomorphic in this abstract setting. 
	\end{abstract}
	
	\maketitle

	\section{Introduction}\label{S:intro}
	
	The theory of complex analysis on complex Banach spaces enjoys a long, well-known, and fruitful history, which is summarized in the text \cite{Mujica}. This theory generalizes the classical theory of functions of a complex variable by using norms as an abstraction of the ordinary modulus on the complex plane. In this paper we investigate complex analysis via an alternative generalization of the complex modulus: \emph{the modulus on Archimedean complex vector lattices}. This complex vector lattice modulus naturally leads to the notion of order convergence, which provides us with an order-theoretic perspective on complex analysis. In particular, we introduce an order-theoretic approach to complex differentiation on Archimedean complex $\Phi$-algebras. 
	
	Specifically, in Section 3 we utilize the notion of order convergent nets in complex vector lattices to develop suitable definitions for order differentiable functions in Definition~\ref{D:order derivative}. It is then illustrated that order differentiable functions satisfy the classical sum, product, chain, and, when $E$ is uniformly complete, quotient rule. A conception of holomorphic functions in the present setting is also introduced.
	
	We then introduce in Section 4 the notions of super order differentiable functions and $\sigma$-super order differentiable functions as a means to strengthen our results on order continuity, the chain rule, and  quotient rule from Section 3.
	
	In Section 6 we apply our theory from Section 3 to power series and show that if a function is analytic in our abstract setting, then it is holomorphic. In order to perform this task, we first improve our Cauchy-Hadarmard formulas from \cite{RoeSch} in Section~\ref{S:CHF}, which now include spectra of convergence of $\Omega$ that are not necessarily order bounded. The key to this improvement is our obtainment of new formulas for the finite and infinite parts of a positive element in the sup-completion, which are introduced in \cite{AzouNasri}. Our alternative approach perhaps yields simpler formulas than the ones found in \cite{AzouNasri}, which facilitate our proofs for the new and improved Cauchy-Hadamard formulas.
	
	This paper continues our development of complex analysis on complex vector lattices, which began with a study of series and power series on such spaces in \cite{RoeSch}. The reader will infer that our order-theoretical approach reveals new insights, such as how the notions of weak order units and projection bands play a compelling role in complex analysis.
	
	\section{Preliminaries}\label{S:prelims}
	
	The reader is referred to the standard texts \cite{AlipBurk, Zaanen1, Zaanen2, littleZaanen} for any unexplained terminology or basic results in vector lattice theory. As usual, $\mathbb{C}$ denotes the field of complex numbers, and the set of strictly positive integers is denoted by $\mathbb{N}$. We also write $\mathbb{N}_{0}:=\mathbb{N}\cup\{0\}$ throughout.
	
	An Archimedean (real) vector lattice $F$ is said to be \textit{square mean closed} (see \cite[page~482]{AzBouBus} or \cite[page 356]{deSchipper}) if $\sup\{ (\cos\theta)x+(\sin\theta)y:\theta\in[0,2\pi]\}$
	exists in $F$ for every $x,y\in F$, in which case we write
	\[
	x\boxplus y:=\sup\{ (\cos\theta)x+(\sin\theta)y:\theta\in[0,2\pi]\}.
	\]
	
	Given a square mean closed Archimedean (real) vector lattice $F$, the vector space complexification $F\oplus iF$ is called an \textit{Archimedean complex vector lattice} \cite[pages 356--357]{deSchipper}, and $F$ is called the \textit{real part} of $F\oplus iF$.
	
	\begin{notation}
		We denote the real part of an Archimedean complex vector lattice $E$ by $F$ throughout this paper.
	\end{notation}
	
	The \textit{modulus} on an Archimedean complex vector lattice $E$ is defined by
	\[
	|x+iy|:=x\boxplus y\quad (x,y\in F).
	\]
	The \textit{positive cone} $E_+$ of $E$ is the set of all elements which are invariant under the modulus, that is, $E_{+}:=\{z\in E:|z|=z\}$. Observe that $E_+=F_+$.
	
	In \cite{RoeSch} it was necessary to confine our theory to the context of universally complete complex vector lattices. Indeed, without the assumption of universal completeness, fundamental results fail to be true. For example, it is shown in \cite[Remark~3.7]{RoeSch} that the geometric series can fail to converge in order in Archimedean complex vector lattices that are not universally complete. Hence we will focus solely on universally complete vector lattices in Sections~\ref{S:CHF} and \ref{S:power series}, where we study power series. We are able to relax the conditions on $E$ in Sections~\ref{S:diff'n} and \ref{S:super} to Archimedean complex $\Phi$-algebras, although some instances still require uniform completeness.
	
	If $F$ is an Archimedean \textit{(real) $\Phi$-algebra}, that is, an Archimedean (real) $f$-algebra possessing a multiplicative identity, then its multiplication canonically extends to a complex multiplication on $E=F\oplus iF$. The multiplication on $E$ will be indicated by juxtaposition, and we denote the multiplicative identity of $E$ by $e$ throughout. Therefore, $E$ is endowed with a complex $\Phi$-algebra structure and can rightly be called a \textit{complex $\Phi$-algebra}. Note that $E$ is semiprime by \cite[Corollary~10.4]{dP}.
	
	As usual, if $z\in E$ is (multiplicatively) invertible, we denote its multiplicative inverse by $z^{-1}$. Moreover, we set $z^0:=e$ for all $z\in E$. For more information on complex $\Phi$-algebras we refer the reader to \cite{BusSch} .
	
	We next record some useful basic properties of $E$, which we repeatedly and freely use throughout this paper. Note that real $\Phi$-algebra analogues of statements (i)--(iv) can be found in \cite[Section 142]{Zaanen2}.
	
	For all $z,w\in E$:
	
	\begin{itemize}
		\item[(i)] $zw=wz$.
		\item[(ii)] $|zw|=|z||w|$.
		\item[(iii)] $|z|\wedge|w|=0$ if and only if $zw=0$.
		\item[(iv)] If $z$ is invertible and positive then $z^{-1}$ is positive.
	\end{itemize}
	
	We next turn to the notion of order convergent nets in $E$.
	
	\begin{definition}\label{D:ord conv}
		A net $(z_\alpha)_\alpha$ in $E$ \textit{converges in order} to $z\in E$ (we denote this by writing $z_\alpha\to z$), if there exists a positive decreasing net $(q_\beta)_\beta$ with $\inf_{\beta}q_\beta=0$ (in symbols $q_\beta\downarrow 0$) 
		such that for all $\beta$, there exists $\alpha_{0}$ for which $|z_\alpha-z|\leq q_\beta$ for all $\alpha\geq\alpha_0$.
	\end{definition}
	
	We add that a natural analog of Definition~\ref{D:ord conv} for sequences is obtained by replacing the nets $(z_\alpha)_\alpha$ and $(q_\beta)_\beta$ with sequences $(z_n)_{n\ge 0}$ and $(q_m)_{m\ge 0}$. We will use the notation $z_n\to z$ to indicate when a sequence $(z_n)_{n\ge 0}$ in $E$ converges in order to $z\in E$.
	
	\section{Differentiation for order convergence}\label{S:diff'n}

	\begin{notation}
		$E$ will denote an Archimedean complex $\Phi$-algebra in this section.
	\end{notation}
	
	Next we extend the classical notion of differentiable functions on $\mathbb{C}$ to the Archimedean complex $\Phi$-algebra setting. The notation $f\colon\mathrm{dom}(f)\to E$ is used throughout, where by $\mathrm{dom}(f)$ we indicate the domain of $f$. 
	
	In the classical theory of differentiation, the points at which a function is differentiable are contained in some open set. For the order theoretical analogue, the situation is similar, but with subtle differences, as will be pointed out where necessary. To that end, we start by introducing the following notation.
	
	\begin{notation}
		For $z,w\in F$ we write $z\ll w$ to indicate when $w-z$ is a positive invertible element in $F$.
	\end{notation}
	
	The order theoretical analogues of open and closed disks are as follows.
	\begin{definition}
		For an invertible element $r \in E_+$ and $c\in E$ define 
		
		\[
		\overset{\circ}{\Delta}(c,r) := \{z \in E \colon |z-c| \ll r\},
		\]
		which plays the role of the analogue of an open disk. Moreover, we define 
		
		\[
		\bar{\Delta}(c,r):=\{z \in E \colon |z-c|\le r\},
		\]
		which plays the role of the analogue of a closed disk. We say for $c \in E$ and $r\in E_+$ an invertible element that a set of the form $U_c:=\overset{\circ}{\Delta}(c,r)$ is an \emph{order open neighborhood} of $c$. Furthermore, a set $U \subseteq E$ is said to be \emph{order open} if for every point $c \in U$ there is an order open neighborhood of $c$ contained in $U$.  
	\end{definition}
	
	\begin{remark}
		Note that for $c \in E$ and any invertible element $r\in E_+$ the order disk $\overset{\circ}{\Delta}(c,r)$ is order open, which is not the case for the set $\{z\in E\colon |z-c|<r\}$. Consider for example $\mathbb{C}^2$ with $r:=(1,1)$ and $c:=(0,0)$. Then $(1,0)<(1,1)$, but $\{(z,w)\in \mathbb{C}^2\colon |(z,w)|<(1,1)\}$ does not contain an order open neighborhood of $(1,0)$.   
	\end{remark}
	
	With the analogue of open disks in place, we introduce differentiable functions in the current setting.
	
	\begin{definition}\label{D:order derivative}
		A function $f\colon\mathrm{dom}(f)\to E$ is said to be \emph{order differentiable at $c\in\mathrm{dom}(f)$} if there exists an order open neighborhood $U_c\subseteq \mathrm{dom}(f)$ of $c$ and an $f_c\in E$ such that, for every $h_\alpha\to 0$ with $c+h_\alpha\in U_c$ for all $\alpha$, there is a net $q_\beta\downarrow 0$ with the property that for all $\beta$, there is an $\alpha_0$ for which
		\begin{align}\label{E:order derivative}
			\bigl|f(c+h_\alpha)-f(c)-h_\alpha f_c\bigr|\le|h_\alpha|q_\beta
		\end{align}
		holds whenever $\alpha\geq\alpha_0$. In this case we call $U_c$ a $D$-\textit{neighborhood} for $f$ at $c$.
	\end{definition}
	
	\begin{remark}
		In Definition~\ref{D:order derivative}, the order openness of $U_c$ implies that there exists an invertible element $r \in E_+$ such that $\overset{\circ}{\Delta}(c,r)\subseteq U_c$. Taking $(h_\alpha)_\alpha$ to be the sequence defined by $h_n:=\frac{1}{n+1}r$ consisting of positive invertible elements, we have $h_n\to 0$ and $c+h_n\in U_c$ for every $n\in\mathbb{N}_0$. Thus we can divide both sides of the inequality \eqref{E:order derivative} by $|h_n|=h_n$ to obtain
		\[
		\bigl|\bigl(f(c+h_n)-f(c)\bigr)h_n^{-1}-f_c\bigr|\le q_\beta,
		\]
		implying that $f_c$ is unique. 
	\end{remark}
	
	\begin{definition}\label{D:f'(c)}
		We call the unique $f_c$ in Definition~\ref{D:order derivative} the \emph{order derivative of $f$ at $c$} and will from now on denote it by $f'(c)$.
	\end{definition}
	
	\begin{remark}\label{R:not super diff}
		In Defintion~\ref{D:order derivative}, the assumption that \eqref{E:order derivative} holds for all $h_\alpha \to 0$ with $c+h_\alpha\in U_c$ does not imply in general that \eqref{E:order derivative} holds for all $h_\alpha \to 0$ with $c+h_\alpha\in\mathrm{dom}(f)$, as the following example illustrates.
		
		Consider the universally complete vector lattice of all complex-valued sequences $\mathbb{C}^\mathbb{N}$. Define $f\colon \mathbb{C}^\mathbb{N}\to\mathbb{C}^\mathbb{N}$ by
		\[
		f(z_1,z_2,z_3,...):=\begin{cases} (z_1,z_2,z_3,...)\qquad \textnormal{if}\qquad |(z_1,z_2,z_3,...)|\ll e\\
			(z_2,z_3,z_4,...)\qquad \textnormal{if}\qquad |(z_1,z_2,z_3,...)|\not\ll e.
		\end{cases}
		\]
		Let $h_\alpha\to 0$ satisfy $h_\alpha\in\overset{\circ}{\Delta}(0,e)$ for all $\alpha$. Then for each $\alpha$,
		\begin{align*}
			|f(0+h_\alpha)-f(0)-eh_\alpha|&=0\leq|h_\alpha|^2
		\end{align*}
		holds. Thus $f$ is order differentiable at $0$ and $f'(0)=e$.
		
		However, it is not true that
		\[
		|f(0+h_\alpha)-f(0)-h_\alpha|\leq|h_\alpha|q_\beta
		\]
		holds for some net $q_\beta\downarrow 0$ in $\mathbb{C}^\mathbb{N}_+$
		whenever $h_\alpha\in\mathrm{dom}(f)$ for all $\alpha$ and $h_\alpha\to 0$.
		
		To verify, suppose this statement is true. Note that the sequence
		\[
		h_n=(\underbrace{0,\dots,0}_{n\ \text{terms}},2,2,2,...)
		\]
		satisfies $h_n\to 0$ and $h_n\in\mathrm{dom}(f)$ for all $n\in\mathbb{N}_0$. Moreover, for $n\geq 2$ we have
		\begin{align*}
			|f(0+h_n)-f(0)-h_n|&=|f(h_n)-h_n|\\
			&=|(\underbrace{0,\dots,0}_{n-1\ \text{terms}},2,2,2,...)-(\underbrace{0,\dots,0}_{n\ \text{terms}},2,2,...)|\\
			&=|(\underbrace{0,\dots,0}_{n-1\ \text{terms}},2,0,0,0,...)|.
		\end{align*}
		
		Next fix $\beta$. Note that for any $n\geq 2$ we have 
		\[
		|(\underbrace{0,\dots,0}_{n-1\ \text{terms}},2,0,0,0,...)|\leq|(\underbrace{0,\dots,0}_{n\ \text{terms}},2,2,2,...)|q_\beta,
		\]
		which is a contradiction. Thus we will say that $f$ is not \emph{super order differentiable} at $0$, a topic which is explored in Section~\ref{S:super}.
	\end{remark}
	
	\begin{remark}\label{R:C^2}
		One can see from Remark~\ref{R:not super diff} that the function $f\colon\mathbb{C}^\mathbb{N}\to\mathbb{C}^\mathbb{N}$ defined by
		\[
		f(z_1,z_2,z_3,...):=(z_2,z_3,z_4,...)
		\]
		is not order differentiable at $0$. Similarly, one can show that the function $f\colon\mathbb{C}^2\to\mathbb{C}^2$ defined by $f(z,w):=(w,z)$ is not order differentiable at $0$ either. This fact displays a fundamental difference between order differentiablity and Fr\'echet differentiability, as $f$ has a Fr\'echet derivative everywhere on $\mathbb{C}^2$.
	\end{remark}
	
	The following lemma extends the classical result which states that differentiable complex functions are continuous. It will be strengthened to order continuity without restriction in Lemma~\ref{L:super diff is ord cont} using the notion of super order differentiability. 
	
	\begin{lemma}\label{L:diff is cont}
		If a function $f\colon \mathrm{dom}(f)\to E$ has order derivative at $c\in\mathrm{dom}(f)$, then there exists an order open neighborhood $U_c$ of $c$ such that the restriction $f|_{U_c}$ to $U_c$ is order continuous at $c$.
	\end{lemma}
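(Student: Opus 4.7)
The plan is to take the $D$-neighborhood $U_c$ supplied by the hypothesis of order differentiability as the required neighborhood, and to deduce order continuity at $c$ by directly manipulating the defining inequality of Definition~\ref{D:order derivative}. Suppose $z_\alpha\to c$ with $z_\alpha\in U_c$ for all $\alpha$, and set $h_\alpha:=z_\alpha-c$. Then $h_\alpha\to 0$ and $c+h_\alpha\in U_c$, so by Definition~\ref{D:order derivative} there exists $q_\beta\downarrow 0$ such that for every $\beta$ one has
\[
\bigl|f(z_\alpha)-f(c)-h_\alpha f'(c)\bigr|\le|h_\alpha|\,q_\beta
\]
for all sufficiently large $\alpha$. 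Applying the triangle inequality together with property (ii) of the modulus (so that $|h_\alpha f'(c)|=|h_\alpha||f'(c)|$) yields
\[
|f(z_\alpha)-f(c)|\le |h_\alpha|\bigl(q_\beta+|f'(c)|\bigr)
\]
eventually in $\alpha$, for every $\beta$.

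Next, I would fix a single index $\beta_0$ and set $M:=q_{\beta_0}+|f'(c)|\in E_+$. The task then reduces to showing $|h_\alpha|\,M\to 0$. Since $h_\alpha\to 0$, one may choose $p_\gamma\downarrow 0$ with $|h_\alpha|\le p_\gamma$ eventually, whence $|h_\alpha|\,M\le p_\gamma M$ eventually. Because multiplication by the fixed positive element $M$ is order continuous in the Archimedean $f$-algebra $F$ (and hence on $E=F\oplus iF$), the net $p_\gamma M$ decreases to $0$, so $f(z_\alpha)-f(c)\to 0$, proving order continuity of $f|_{U_c}$ at $c$.

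The main subtlety — and the only real obstacle — is the interaction between the two independent nets $(q_\beta)_\beta$ (coming from the differentiability estimate) and $(p_\gamma)_\gamma$ (governing $h_\alpha\to 0$); trying to let both vary at once leads to awkward product indexing. The trick is to freeze a single $\beta_0$, converting the coefficient $q_{\beta_0}+|f'(c)|$ into a fixed positive element, so that the order continuity of multiplication in the $f$-algebra closes the argument. No additional assumption on $E$ (such as uniform completeness) is needed here, which is consistent with the lemma being stated for a general Archimedean complex $\Phi$-algebra.
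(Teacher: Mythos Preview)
Your proof is correct and follows essentially the same route as the paper's: take $U_c$ to be a $D$-neighborhood, set $h_\alpha=z_\alpha-c$, apply the differentiability estimate together with the triangle inequality to get $|f(z_\alpha)-f(c)|\le|h_\alpha|(q_\beta+|f'(c)|)$, and then dominate $|h_\alpha|$ by a regulating net $p_\gamma\downarrow 0$. The only presentational difference is that the paper keeps both indices alive and uses the product-indexed net $p_\gamma(q_\beta+|f'(c)|)$ directly, whereas you freeze a single $\beta_0$ to reduce to $p_\gamma M\downarrow 0$; both arrive at the same conclusion via order continuity of multiplication by a fixed positive element.
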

	
	\begin{proof} Suppose $f$ has order derivative $f'(c)$ at $c$. Then there is an order open neighborhood $U_c$ of $c$ such that for $h_\alpha\to 0$ with $c+h_\alpha\in U_c$ for all $\alpha$, there exists a net $q_\beta\downarrow 0$ such that for every $\beta$ there exists an $\alpha_0$ for which 
		\[
		\bigl|f(c+h_\alpha)-f(c)-h_\alpha f'(c)\bigr|\le |h_\alpha|q_\beta\qquad (\alpha\geq\alpha_0).
		\]
		
		Let $(z_\alpha)_\alpha$ be a net in $U_c$ converging in order to $c$. Then the net $h_\alpha:=z_\alpha-c$ converges in order to $0$.
		
		Moreover, there is a net $p_\gamma\downarrow 0$ such that for all $\gamma$ there exists $\alpha_1$ such that $|h_\alpha|\le p_\gamma$ whenever $\alpha\geq\alpha_1$. For $\alpha_2\geq\alpha_0,\alpha_1$ we have 
		\begin{align*}
			|f(z_\alpha)-f(c)|&\le|f(c+(z_\alpha-c))-f(c)-(z_\alpha-c)f'(c)|+|z_\alpha-c||f'(c)|\\&\le |z_\alpha-c|q_\beta+|z_\alpha-c||f'(c)|\le p_\gamma(q_\beta+|f'(c)|).
		\end{align*}
		Hence $f(z_\alpha)\to f(c)$.
	\end{proof}
	
	We next shift our focus to differentiating the function $z \mapsto z^{-1}$, which is the foundation for the quotient rule. Denote by $E^{-1}$ the set of invertible elements of $E$. 
	
	\begin{remark}\label{R:conv iff bdd}  A net $(z_\alpha)_\alpha$ of invertible elements converging in order to an invertible element $z$ has the property that $z_\alpha^{-1}\to z^{-1}$ if and only if there exists an index $\alpha_0$ such that $\{|z_\alpha|^{-1}\colon \alpha\ge \alpha_0\}$ is order bounded. To verify this fact, note that if $\{|z_\alpha|^{-1}\colon \alpha\ge \alpha_0\}$ is order bounded for some $\alpha_0$, then for an upper bound  $u$ of $\{|z_\alpha|^{-1}\colon \alpha\ge \alpha_0\}$, we have
		\[
		\bigl|z_\alpha^{-1}-z^{-1}\bigr|=|z_\alpha|^{-1}|z|^{-1}|z_\alpha-z|\le u |z|^{-1}|z_\alpha-z|
		\]
		for all $\alpha\ge \alpha_0$, and so $z_\alpha^{-1}\to z^{-1}$.
		
		On the other hand, if $z_\alpha^{-1}\to z^{-1}$, then there is a net $p_\beta\downarrow 0$ such that for any $\beta$ there is an $\alpha_0$ such that $\bigl|z_\alpha^{-1}-z^{-1}\bigr|\le p_\beta$ for all $\alpha\ge\alpha_0$. Hence $|z_\alpha|^{-1}\le\bigl|z_\alpha^{-1}-z^{-1}\bigr|+|z|^{-1}\le p_\beta+|z|^{-1}$ for all $\alpha\ge \alpha_0$, so $\{|z_\alpha|^{-1}\colon \alpha\ge \alpha_0\}$ is order bounded.
	\end{remark}
	
	\begin{lemma}\label{L:locally ord cts} If $E$ is uniformly complete, then the set $E^{-1}$ is an order open set. In this case, considering the function $f\colon E^{-1}\to E^{-1}$ defined by $f(z):=z^{-1}$ and $c\in E^{-1}$, there exists an order open neighborhood $U_c\subseteq E^{-1}$ of $c$ such that $f|_{U_c}$ is order continuous.
	\end{lemma}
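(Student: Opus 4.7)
The plan is a Neumann series construction of the inverse, with uniform completeness providing the convergence. Given $c\in E^{-1}$, observe that $|c|\in F_+$ is invertible with inverse $|c^{-1}|$, since $|c||c^{-1}|=|cc^{-1}|=e$ by property~(ii). I propose to take $U_c := \overset{\circ}{\Delta}(c,\frac{1}{2}|c|)$, which is a well-defined order open neighborhood of $c$, and to show that this single choice witnesses both the order openness of $E^{-1}$ and the local order continuity of $f$.

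For the first assertion, fix $z \in U_c$. Since $|z-c|\ll \frac{1}{2}|c|$ implies $|z-c|\le \frac{1}{2}|c|$, setting $v := c^{-1}(z-c)$ yields $|v|=|c^{-1}||z-c|\le \frac{1}{2}e$. The partial sums $S_N := \sum_{n=0}^{N}(-v)^n$ of the Neumann series then satisfy
\[
|S_N - S_M| \le \sum_{n=M+1}^{N}|v|^n \le \frac{1}{2^M}e
\]
for $N\ge M$, so $(S_N)$ is uniformly Cauchy with respect to $e$; uniform completeness of $E$ then produces an $S\in E$ with $S_N \to S$ in order. The telescoping identity $(e+v)S_N = e - (-v)^{N+1}$, together with $|(-v)^{N+1}|\le 2^{-(N+1)}e\to 0$ and order continuity of left multiplication by $e+v$, yields $(e+v)S=e$. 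Therefore $z=c(e+v)$ is invertible with inverse $Sc^{-1}$, proving $U_c\subseteq E^{-1}$ and hence that $E^{-1}$ is order open.

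For the second assertion, I appeal to the reverse triangle inequality for the modulus: for every $w\in U_c$ we have $|w|\ge |c|-|w-c|\ge \frac{1}{2}|c|$, and hence $|w|^{-1}\le 2|c|^{-1}$. Thus any net $(w_\alpha)$ in $U_c$ enjoys the uniform bound $|w_\alpha|^{-1}\le 2|c|^{-1}$, so Remark~\ref{R:conv iff bdd} immediately delivers $w_\alpha^{-1}\to z^{-1}$ whenever $w_\alpha\to z$ in $U_c$, establishing order continuity of $f|_{U_c}$ at each point of $U_c$.

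The step I expect to be most delicate is the passage from uniform Cauchyness to order convergence of the Neumann series and then to the limit of the identity $(e+v)S_N = e - (-v)^{N+1}$; this is the place where uniform completeness is essential, and it also underlies the fact that $(-v)^{N+1}$ tends to $0$ in order. Beyond that, the computation is a matter of tracking bounds, and the uniform estimate $|w|\ge \frac{1}{2}|c|$ on $U_c$ is tailor-made for Remark~\ref{R:conv iff bdd}.
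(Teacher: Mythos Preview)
Your proof is correct and uses the same neighborhood $U_c=\overset{\circ}{\Delta}(c,\tfrac12|c|)$ and the same appeal to Remark~\ref{R:conv iff bdd} for the continuity statement as the paper does. The difference lies in how you obtain invertibility of $z\in U_c$. The paper argues that $\tfrac12|c|\le|z|$ forces $|z|$ to be invertible by \cite[Theorems~11.1~\&~11.4]{dP}, and then writes $z^{-1}=\bar z\,|z|^{-2}$; you instead build $z^{-1}$ directly from the Neumann series for $(e+v)^{-1}$ with $v=c^{-1}(z-c)$, using uniform completeness to sum the $e$-uniformly Cauchy partial sums. Your route is more self-contained (no external reference needed) and makes explicit where uniform completeness enters, at the cost of a short convergence argument; the paper's route is quicker but hides the work inside de~Pagter's results. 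One small point: when you pass to the limit in $(e+v)S_N=e-(-v)^{N+1}$, you do not need full order continuity of multiplication---since $S_N\to S$ and $(-v)^{N+1}\to 0$ relatively uniformly with regulator $e$, the bound $|(e+v)(S_N-S)|\le|e+v|\,|S_N-S|$ already suffices in an Archimedean space.
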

	
	\begin{proof}
		Assume that $E$ is uniformly complete. To show that $E^{-1}$ is order open, let $c\in E^{-1}$. If $z\in U_c:=\overset{\circ}{\Delta}(c,\frac{1}{2}|c|)$, then $\frac{1}{2}|c|\leq|z|$. Thus $|z|$ is invertible by \cite[Theorems~11.1~\&~11.4]{dP}. Then $z$ is also invertible with $z^{-1}=\bar{z}|z|^{-2}$, where the complex conjugate $\bar{z}$ is defined as usual. It follows that $E^{-1}$ is order open.
		
		Moreover, if $(z_\alpha)_\alpha$ is a net in $\overset{\circ}{\Delta}(c,\frac{1}{2}|c|)$ such that $z_\alpha \to c$, then $|z_\alpha|^{-1}\leq 2|c|^{-1}$ for every $\alpha$, so $z_\alpha^{-1}\to c^{-1}$ as explained in Remark~\ref{R:conv iff bdd}. Hence $f|_{U_c}$ is order continuous.
	\end{proof}
	
	\begin{remark}
		The set $E^{-1}$ is not necessarily order open if $E$ is not uniformly complete. For example, first consider the Archimedean real $\Phi$-algebra $F=PP[0,1]$ of all continuous piecewise polynomials on $[0,1]$. Let $F_1:=F$, and for $n\in\mathbb{N}$, let $F_{n+1}$ be the Archimedean real $\Phi$-algebra of all functions on $[0,1]$ that are of the form
		\[
		f(x):=\sum_{k=1}^m p_k(x)\sqrt{q_k(x)}
		\]
		for some $m\in\mathbb{N}$, $p_k\in F_n$, and $q_k\in (F_n)_+\ (k\in\{1,...,m\})$. Then $G:=\bigcup_{n=1}^{\infty}F_n$ is an Archimedean real $\Phi$-algebra. Moreover, if $f,g\in G$, then there exist $n\in\mathbb{N}$ such that $f,g\in F_n$. Since $F_n$ is a $\Phi$-algebra, we have $f^2+g^2\in F_n$ as well, and hence $\sqrt{f^2+g^2}\in F_{n+1}$. It follows that $G$ is square mean closed and so $G_\mathbb{C}$ is an Archimedean complex $\Phi$-algebra. Of course, the constant function $e$ with value 1 is invertible in $G_\mathbb{C}$. Let $r\in G_+$ be any invertible element and define
		\[
		z(x):=1+x\wedge{\textstyle\frac{1}{2}}r(x)\quad  (x \in [0,1]).
		\]
		Note that $z \in G$ satisfies $z\in\overset{\circ}{\Delta}(e,r)$. Moreover, since $r$ is positive, invertible, and continuous, there exists a $\delta>0$ such that $x<\frac{1}{2}r(x)$ for all $x\in[0,\delta)$. Thus $z(x)=1+x$ on $[0,\delta)$, and so $z$ is not invertible in $G$. Hence $G_\mathbb{C}^{-1}$ is not order open. 
	\end{remark}
	
	Our next lemma confirms that the map $z \mapsto z^{-1}$ on a uniformly complete $E$ indeed possesses an order derivative at every point in its domain $E^{-1}$.
	
	\begin{lemma}\label{L:derivative of x^{-1}}
		Suppose $E$ is uniformly complete. Define the function $f\colon E^{-1}\to E^{-1}$ by $f(z):=z^{-1}$. Then $f'(c):=-c^{-2}$ is the order derivative of $f$ at $c$ for every $c\in E^{-1}$.
	\end{lemma}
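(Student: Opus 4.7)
The plan is to work with the explicit order open neighborhood supplied by Lemma~\ref{L:locally ord cts}, namely $U_c:=\overset{\circ}{\Delta}(c,\tfrac{1}{2}|c|)\subseteq E^{-1}$, and to verify Definition~\ref{D:order derivative} with $f_c:=-c^{-2}$ by a direct algebraic manipulation followed by a standard size estimate.

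First I would compute the key identity. Since $E$ is commutative and $c,c+h_\alpha\in E^{-1}$, we have $(c+h_\alpha)^{-1}-c^{-1}=-c^{-1}h_\alpha(c+h_\alpha)^{-1}$, from which
\[
f(c+h_\alpha)-f(c)-h_\alpha f_c
= -c^{-1}h_\alpha(c+h_\alpha)^{-1}+h_\alpha c^{-2}
= h_\alpha^{2}c^{-2}(c+h_\alpha)^{-1}.
\]
Taking the modulus and using properties (ii) and (iv) of $E$ listed in Section~\ref{S:prelims}, this yields
\[
\bigl|f(c+h_\alpha)-f(c)-h_\alpha f_c\bigr|=|h_\alpha|^{2}|c|^{-2}|c+h_\alpha|^{-1}.
\]

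Next I would control $|c+h_\alpha|^{-1}$. Because $c+h_\alpha\in U_c$, we have $|h_\alpha|\ll \tfrac{1}{2}|c|$, so $|c+h_\alpha|\ge |c|-|h_\alpha|\ge \tfrac{1}{2}|c|$; since $\tfrac{1}{2}|c|$ is invertible (as $c$ is), the argument from Remark~\ref{R:conv iff bdd} (or directly \cite[Theorems~11.1~\&~11.4]{dP} together with uniform completeness) gives $|c+h_\alpha|^{-1}\le 2|c|^{-1}$. Substituting,
\[
\bigl|f(c+h_\alpha)-f(c)-h_\alpha f_c\bigr|\le 2|h_\alpha|^{2}|c|^{-3}.
\]

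Finally I would absorb one factor of $|h_\alpha|$ into the small net. Since $h_\alpha\to 0$, pick a net $q'_\beta\downarrow 0$ and, for each $\beta$, an index $\alpha_0$ with $|h_\alpha|\le q'_\beta$ for $\alpha\ge\alpha_0$. Setting $q_\beta:=2|c|^{-3}q'_\beta$, we get $q_\beta\downarrow 0$ and
\[
\bigl|f(c+h_\alpha)-f(c)-h_\alpha f_c\bigr|\le|h_\alpha|\,q_\beta\qquad(\alpha\ge\alpha_0),
\]
which is exactly \eqref{E:order derivative}. So $f'(c)=-c^{-2}$, as claimed.

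The only mildly delicate point is the bound $|c+h_\alpha|^{-1}\le 2|c|^{-1}$: this is where uniform completeness is genuinely used, since without it the inverse of $|c+h_\alpha|$ need not exist in $E$ (compare the counterexample just before the lemma). Everything else is a rearrangement of the classical $\mathbb{C}$-argument, made legitimate by the $\Phi$-algebra identities (i)--(iv).
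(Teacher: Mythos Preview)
Your proof is correct and follows essentially the same approach as the paper's: the same neighborhood $U_c=\overset{\circ}{\Delta}(c,\tfrac12|c|)$, the same algebraic identity yielding $|h_\alpha|^{2}|c|^{-2}|c+h_\alpha|^{-1}$, and the same bound $|c+h_\alpha|^{-1}\le 2|c|^{-1}$ combined with $|h_\alpha|\le p_\beta$ to produce the required estimate $2|h_\alpha||c|^{-3}p_\beta$. Your explicit remark on where uniform completeness enters is a nice addition.
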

	
	\begin{proof}
		For $c\in E^{-1}$ we have $U_c:=\overset{\circ}{\Delta}(c,\frac{1}{2}|c|) \subseteq E^{-1}$ as seen in the proof of Lemma~\ref{L:locally ord cts}. Let $h_\alpha\to 0$ be such that $c+h_\alpha\in U_c$ for all $\alpha$. There exists a net $(p_\beta)_\beta$ with $p_\beta\downarrow 0$ such that for every $\beta$, there exists an $\alpha_0$ such that for all $\alpha\geq\alpha_0$ we have $|h_\alpha|\leq p_\beta$. Thus for each $\beta$ there is an $\alpha_0$ such that 
		\begin{align*}
			|f(c+h_\alpha)-f(c)+h_\alpha c^{-2}|&=|-c^{-2}(c+h_\alpha)^{-1}h_\alpha c +c^{-2}(c+h_\alpha)^{-1}h_\alpha(c+h_\alpha)|\\&=|h_\alpha|^2|c|^{-2}|c+h_\alpha|^{-1}\\&\le 2|h_\alpha||c|^{-3}p_\beta
		\end{align*}
		for all $\alpha\ge\alpha_0$. It follows that $f'(c)=-c^{-2}$ at every $c\in E^{-1}$.  
	\end{proof}
	
	The following theorem extends the four essential differentiablity theorems from classical complex analysis to the Archimedean complex $\Phi$-algebra setting. The proof of these rules are slight variations of standard arguments. We later present a variation of the chain rule, see Theorem~\ref{T: super Diff rules}($iii$), where the use of $D$-neighborhoods is not needed in the assumption.
	
	\begin{theorem}\label{T: Diff rules}
		Let $f\colon \mathrm{dom}(f)\to E$ and $g\colon \mathrm{dom}(g)\to E$ be functions. The following hold. 
		\begin{itemize}
			\item[$(i)$](sum rule) If $c\in\mathrm{dom}(f)\cap\mathrm{dom}(g)$ and $f$ and $g$ have order derivatives $f'(c)$ and $g'(c)$ at $c$, respectively, then $f'(c)+g'(c)$ is the order derivative of the sum map $z\mapsto f(z)+g(z)$ at $c$.\vskip .2 cm
			
			\item[$(ii)$](product rule) If $c\in\mathrm{dom}(f)\cap\mathrm{dom}(g)$ and $f$ and $g$ have order derivatives $f'(c)$ and $g'(c)$ at $c$, respectively, then $f'(c)g(c)+f(c)g'(c)$ is the order derivative of the product map $z\mapsto f(z)g(z)$ at $c$. \vskip .2 cm
			
			\item[$(iii)$](chain rule) Suppose that $f$ and $g$ are functions so that $\mathrm{range}(g) \subseteq \mathrm{dom}(f)$. Assume that $g$ has order derivative $g'(c)$ at $c\in\mathrm{dom}(g)$ and $f$ has order derivative $f'(g(c))$ at $g(c)$. If there exist $D$-neighborhoods $U_c$ for $g$ at $c$ and $V_{g(c)}$ for $f$ at $g(c)$ such that $g(U_c)\subseteq V_{g(c)}$, then $f'(g(c))g'(c)$ is the order derivative of the composite map $z\mapsto f(g(z))$ at $c$. \vskip .2 cm
			
			\item[$(iv)$](quotient rule) Suppose $c\in\mathrm{dom}(f)\cap\mathrm{dom}(g)$ and $f$ and $g$ have order derivatives $f'(c)$ and $g'(c)$ at $c$, respectively, and $\mathrm{range}(g) \subseteq E^{-1}$. If $E$ is uniformly complete and there exists $D$-neighborhoods $U_c$ for $g$ at $c$ and $V_{g(c)}$ for $z\mapsto z^{-1}$ at $g(c)$ such that $g(U_c)\subseteq V_{g(c)}$, then $\Bigl(f'(c)g(c)-f(c)g'(c)\Bigr)g(c)^{-2}$ is the order derivative of the quotient map $z\mapsto f(z)g(z)^{-1}$ at $c$.
		\end{itemize}
	\end{theorem}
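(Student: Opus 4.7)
The plan is to adapt the four classical differentiation rules to the present setting by replacing limits with the net estimate \eqref{E:order derivative}. Throughout, the main tool will be an indexing device: if $p_\beta\downarrow 0$ and $q_\gamma\downarrow 0$ in $F_+$ and $a\in F_+$ is fixed, then the nets $p_\beta+q_\gamma$, $ap_\beta$, and $p_\beta q_\gamma$ all decrease to $0$ in order on appropriate product directed sets. This is what will allow the various bounds obtained along the way to be assembled into a single expression of the form $|h_\alpha|\cdot(\text{net decreasing to }0)$.

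For the sum rule, I would start from $D$-neighborhoods $\overset{\circ}{\Delta}(c,r_1)$ and $\overset{\circ}{\Delta}(c,r_2)$ for $f$ and $g$, choose a positive invertible $r\in E_+$ dominated by both $r_1$ and $r_2$ so that $\overset{\circ}{\Delta}(c,r)$ is a common $D$-neighborhood, and then obtain the sum estimate by applying the triangle inequality to the two derivative estimates. For the product rule, the key is the identity
\[
(fg)(c+h)-(fg)(c)-h\bigl[f'(c)g(c)+f(c)g'(c)\bigr]=A_h+B_h+C_h,
\]
where $A_h:=[f(c+h)-f(c)-hf'(c)]g(c+h)$, $B_h:=f(c)[g(c+h)-g(c)-hg'(c)]$, and $C_h:=hf'(c)[g(c+h)-g(c)]$. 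Moduli of $A_h$ and $B_h$ are bounded directly by the derivative estimates, and the factors $|g(c+h_\alpha)|$ and $|g(c+h_\alpha)-g(c)|$ that appear in $A_h$ and $C_h$ are controlled using the order continuity of $g$ at $c$ supplied by Lemma~\ref{L:diff is cont}.

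For the chain rule, I would set $k_\alpha:=g(c+h_\alpha)-g(c)$. The derivative estimate for $g$ gives $|k_\alpha|\le|h_\alpha|(q_\beta+|g'(c)|)$ eventually, so $k_\alpha\to 0$; the hypothesis $g(U_c)\subseteq V_{g(c)}$ then guarantees $g(c)+k_\alpha\in V_{g(c)}$, so that the derivative estimate for $f$ at $g(c)$ can be applied with increment $k_\alpha$. Splitting
\[
f(g(c+h_\alpha))-f(g(c))-h_\alpha f'(g(c))g'(c)=\bigl[f(g(c)+k_\alpha)-f(g(c))-k_\alpha f'(g(c))\bigr]+f'(g(c))\bigl[k_\alpha-h_\alpha g'(c)\bigr]
\]
and bounding each bracket then yields the chain rule: the first uses the just-obtained bound on $|k_\alpha|$ combined with the derivative estimate for $f$, and the second is directly controlled by the derivative estimate for $g$.

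The quotient rule then follows by writing $f(z)g(z)^{-1}$ as a product and combining the product rule with the chain rule applied to $g$ and $z\mapsto z^{-1}$, invoking Lemma~\ref{L:derivative of x^{-1}} for the derivative of the latter; this is the unique place that uses uniform completeness. The main obstacle will be the net bookkeeping — in the product and chain rules one must turn quantities such as $|g(c+h_\alpha)|$, which is not a priori order bounded, into contributions to a single net decreasing to $0$ in order via Lemma~\ref{L:diff is cont}, while juggling several product-directed indices simultaneously. A minor but real technical point is that the intersection of two $D$-neighborhoods must itself contain a $D$-neighborhood, which in an Archimedean $\Phi$-algebra amounts to producing a positive invertible element dominated by $r_1\wedge r_2$.
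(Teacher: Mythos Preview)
Your proposal is correct and follows essentially the same route as the paper: a common $D$-neighborhood via $r_1\wedge r_2$ for the sum, a three-term algebraic splitting for the product, the increment $k_\alpha=g(c+h_\alpha)-g(c)$ for the chain rule, and the combination of Lemma~\ref{L:derivative of x^{-1}} with $(iii)$ and $(ii)$ for the quotient. The only cosmetic differences are that the paper uses the decomposition $[f(c+h)-f(c)][g(c+h)-g(c)]+[f(c+h)-f(c)-hf'(c)]g(c)+f(c)[g(c+h)-g(c)-hg'(c)]$ for the product rule (bounding the first factor directly from the derivative estimate rather than invoking Lemma~\ref{L:diff is cont}), and that in an Archimedean $\Phi$-algebra $r_1\wedge r_2$ is itself positive invertible by \cite[Theorem~142.2(iii)]{Zaanen2}, so no smaller $r$ need be produced.
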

	
	\begin{proof}
		$(i)$ If $\overset{\circ}{\Delta}(c,r_1) \subseteq \mathrm{dom}(f)$ and $\overset{\circ}{\Delta}(c,r_2) \subseteq \mathrm{dom}(g)$ with $r_1$ and $r_2$ positive and invertible, then by \cite[Theorem~142.2(iii)]{Zaanen2} $r:=r_1 \wedge r_2$ is a positive invertible element, and we have $U_c:=\overset{\circ}{\Delta}(c,r) \subseteq \mathrm{dom}(f+g)=\mathrm{dom}(f)\cap \mathrm{dom}(g)$. Let $h_\alpha\to 0$ be such that $c+h_\alpha\in U_c$ for all $\alpha$. If $q_\beta\downarrow 0$ and $p_\gamma\downarrow 0$ are so that for fixed $\beta$ and $\gamma$ there is an $\alpha_0$ for which
		\[
		\bigl|f(c+h_\alpha)-f(c)-h_\alpha f'(c)\bigr|\le |h_\alpha|q_\beta\quad\mbox{and}\quad\bigl|g(c+h_\alpha)-g(c)-h_\alpha g'(c)\bigr|\le |h_\alpha|p_\gamma\qquad(\alpha\geq\alpha_0),
		\]
		then 
		\[
		\bigl|(f+g)(c+h_\alpha)-(f+g)(c)-h_\alpha (f'(c)+g'(c))\bigr|\le|h_\alpha|(q_\beta+p_\gamma).
		\]
		Hence $f'(c)+g'(c)$ is the order derivative of $z\mapsto f(z)+g(z)$ at $c$.
		
		$(ii)$ Since $\mathrm{dom}(fg)=\mathrm{dom}(f)\cap \mathrm{dom}(g)$, there is a positive invertible element $r$ such that, by the same argument as in $(i)$, we have $U_c:=\overset{\circ}{\Delta}(c,r) \subseteq \mathrm{dom}(fg)$. Let $h_\alpha\to 0$ be such that $c+h_\alpha\in U_c$ for all $\alpha$. As $h_\alpha\to 0$, there is a sequence $q_\beta\downarrow 0$ such that for any $\beta$ there is an $\alpha_0$ such that $|h_\alpha|\le q_\beta$ for all $\alpha\geq\alpha_0$. Let $p_\gamma\downarrow 0$ and $l_\lambda\downarrow 0$ be so that for fixed $\gamma$ and $\lambda$ there is an $\alpha_1$ such that
		\[
		\bigl|f(c+h_\alpha)-f(c)-h_\alpha f'(c)\bigr|\le |h_\alpha|p_\gamma\quad\mbox{and}\quad\bigl|g(c+h_\alpha)-g(c)-h_\alpha g'(c)\bigr|\le |h_\alpha|l_\lambda\qquad(\alpha\geq \alpha_1).
		\]
		Net let $\alpha_2\geq\alpha, \alpha_1$. By adding and subtracting $f(c)g(c+h_\alpha)$ and
		\[
		\Bigl(f(c+h_\alpha)-f(c)\Bigr)g(c)
		\]
		inside the modulus in the first line below, we obtain
		\begin{align*}
			\Bigl|f(c+h_\alpha)g(c+h_\alpha)-&f(c)g(c)-h_\alpha\Bigl(f'(c)g(c)+f(c)g'(c)\Bigr)\Bigr|\\&\le
			\Bigl|f(c+h_\alpha)-f(c)\Bigr|\Bigl|g(c+h_\alpha)-g(c)\Bigr|+|h_\alpha|p_\gamma|g(c)|+|h_\alpha|l_\lambda|f(c)|\\&\le|h_\alpha|\bigl((p_m\gamma+|f'(c)|)(|h_\alpha|l_\gamma+|h_\alpha||g'(c)|)+p_\gamma|g(c)|+l_\lambda|f(c)|\bigr)\\&\le|h_\alpha|\Bigl((p_\gamma+|f'(c)|)(q_\beta(l_\gamma+|g'(c)|)+p_\gamma|g(c)|+l_\lambda|f(c)|\Bigr)\\&=|h_\alpha|r_{\beta,\gamma,\lambda}
		\end{align*}
		whenever $\alpha\geq\alpha_2$, and where $r_{\beta,\gamma,\lambda}\downarrow 0$ is the net 
		\[
		r_{\beta,\gamma,\lambda}:=(p_\gamma+|f'(c)|)(q_\beta(l_\lambda+g'(c)))+p_\gamma|g(c)|+l_\lambda|f(c)|.
		\]
		Hence $f'(c)g(c)+f(c)g'(c)$ is the order derivative of $z\mapsto f(z)g(z)$ at $c$.
		
		$(iii)$ Let $U_c:=\overset{\circ}{\Delta}(c,r)$ be a $D$-neighborhood for $g$ at $c$ and $V_{g(c)}$ be a $D$-neighborhood for $f$ at $g(c)$ such that $g(U_c)\subseteq V_{g(c)}$. Then $U_c \subseteq \mathrm{dom}(g)\cap g^{-1}(\mathrm{dom}(f))=\mathrm{dom}(f\circ g)$. Let $h_\alpha\to 0$ be so that $c+h_\alpha\in U_c$ for every $\alpha$. Suppose that $q_\beta\downarrow 0$ regulates the order differentiability of $g$ at $c$; that is for fixed $\beta$ there exists an $\alpha_{0}$ for which 
		\[
		\Bigl|g(c+h_\alpha)-g(c)-h_\alpha g'(c)\Bigr|\le |h_\alpha|q_\beta\qquad(\alpha\geq\alpha_0).
		\]
		Similarly, assume $p_\gamma\downarrow 0$ regulates the order differentiability of $f$ at $g(c)$. Then using the order continuity of $g|_{U_c}$ at $c$ from Lemma~\ref{L:diff is cont} and the assumption that $g(U_c)\subseteq V_{g(c)}$, we obtain
		\[
		\Bigl|f\bigl(g(c)+\bigl(g(c+h_\alpha)-g(c)\bigr)\bigr)-f(g(c))-\bigl(g(c+h_\alpha)-g(c)\bigr)f'(g(c))\Bigr|\le|g(c+h_\alpha)-g(c)|p_\gamma
		\]
		for $\alpha\geq\alpha_0$. By adding in and subtracting $f'(g(c))g(c+h_\alpha)$ and $f'(g(c))g(c)$ below, it follows that 
		\begin{align*}
			|f(g(c+h_\alpha))-f(g(c))-h_\alpha f'(g(c))g'(c)|&\le |g(c+h_\alpha)-g(c)|p_\gamma+|h_\alpha||f'(g(c))|q_\beta\\&\le|h_\alpha|(q_\beta p_\gamma+|g'(c)|p_\gamma+|f'(g(c))|q_\beta)\\&=|h_\alpha|l_{\beta,\gamma},
		\end{align*}
		for all $\alpha\geq\alpha_0$, and where $l_{\beta,\gamma}\downarrow 0$ is the net
		\[
		l_{\beta,\gamma}:=q_\beta p_\gamma+|g'(c)|p_\gamma+|f'(g(c))|q_\beta.
		\]
		Hence $f'(g(c))g'(c)$ is the order derivative of $z\mapsto f(g(z))$ at $c$.
		
		$(iv)$ Suppose $E$ is uniformly complete. Assume there exist $D$-neighborhoods $U_c$ for $g$ at $c$ and $V_{g(c)}$ for the map $z\mapsto z^{-1}$ at $g(c)$ such that $g(U_c) \subseteq V_{g(c)}$. There is an invertible element $r \in E_+$ such that $U_c:=\overset{\circ}{\Delta}(c,r) \subseteq \mathrm{dom}(f)\cap \mathrm{dom}(g)=\mathrm{dom}(f/g)$, by the same argument used in part $(i)$ and $(ii)$. Let $h_\alpha\to 0$ be so that $c+h_\alpha\in U_c$ for every $\alpha$. By Lemma~\ref{L:derivative of x^{-1}} and the chain rule $(iii)$ it follows that $-g'(c) g(c)^{-2}$ is the order derivative for $z\mapsto g(z)^{-1}$ at $c$, and by the product rule $(ii)$ we have that
		\[
		f'(c) g(c)^{-1}-f(c)g'(c) g(c)^{-2}=\Bigl(f'(c)g(c)-f(c)g'(c)\Bigr)g(c)^{-2}
		\]
		is the order derivative of $z\mapsto f(z)g(z)^{-1}$ at $c$.
	\end{proof}
	
	We conclude this section with the definition of the order theoretical analogue of a holomorphic function. 
	
	\begin{definition}
		Let $U\subseteq E$ be an order open set. A function $f \colon U \to E$ is said to be \emph{holomorphic} on $U$, if $f$ is order differentiable at every $c \in U$.
	\end{definition}
	
	An example of a holomorphic function is the map $z \mapsto z^{-1}$ on $E^{-1}$ for uniformly complete $E$ by Lemmas~\ref{L:locally ord cts}\&\ref{L:derivative of x^{-1}}.
	
	\section{Super and $\sigma$-super order differentiability}\label{S:super}
	
	We introduce the concepts of super and $\sigma$-super order differentiability in this section as a means to strengthen the conclusions of Lemma~\ref{L:locally ord cts} and Theorem~\ref{T: Diff rules}.
	
	\begin{definition}\label{D:super order derivative}
		We call a function $f\colon\mathrm{dom}(f)\to E$   \emph{super order differentiable at $c\in\mathrm{dom}(f)$} if $f$ is order differentiable at $c$ and the inequality \eqref{E:order derivative} in Definition~\ref{D:order derivative} holds for all nets $h_\alpha\to 0$ such that $c+h_\alpha\in\mathrm{dom}(f)$ for all $\alpha$.
	\end{definition}
	
	By following the proof of Lemma~\ref{L:diff is cont}, one readily deduces the lemma below.
	
	\begin{lemma}\label{L:super diff is ord cont}
		If a function $f\colon \mathrm{dom}(f)\to E$ is super order differentiable at $c\in\mathrm{dom}(f)$, then $f$ is order continuous at $c$.
	\end{lemma}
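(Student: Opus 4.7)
The plan is to mirror the argument used for Lemma~\ref{L:diff is cont}, exploiting the fact that super order differentiability removes the need to confine the incremental net $(h_\alpha)_\alpha$ to a $D$-neighborhood of $c$. This is precisely the ingredient that forced Lemma~\ref{L:diff is cont} to produce only a \emph{local} continuity statement; once that restriction is lifted, genuine order continuity at $c$ follows without modification to the underlying estimate.

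In detail, I would start with an arbitrary net $(z_\alpha)_\alpha$ in $\mathrm{dom}(f)$ with $z_\alpha\to c$, and introduce the auxiliary net $h_\alpha:=z_\alpha-c$, which satisfies $h_\alpha\to 0$ and $c+h_\alpha\in\mathrm{dom}(f)$ for all $\alpha$. By the super order differentiability hypothesis applied to this specific $(h_\alpha)_\alpha$, there exists $q_\beta\downarrow 0$ such that for each $\beta$ there is an $\alpha_0$ with
\[
\bigl|f(c+h_\alpha)-f(c)-h_\alpha f'(c)\bigr|\le |h_\alpha|q_\beta\qquad (\alpha\ge\alpha_0).
\]
Next, using $h_\alpha\to 0$, I pick a regulating net $p_\gamma\downarrow 0$ such that for each $\gamma$ there is an $\alpha_1$ with $|h_\alpha|\le p_\gamma$ for $\alpha\ge\alpha_1$. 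For $\alpha$ beyond both $\alpha_0$ and $\alpha_1$ the triangle inequality yields
\[
|f(z_\alpha)-f(c)|\le |h_\alpha|q_\beta+|h_\alpha||f'(c)|\le p_\gamma\bigl(q_\beta+|f'(c)|\bigr),
\]
and since $p_\gamma(q_\beta+|f'(c)|)\downarrow 0$ jointly in $(\beta,\gamma)$, this is exactly the definition of $f(z_\alpha)\to f(c)$.

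The argument is essentially a rerun of the Lemma~\ref{L:diff is cont} computation, so I do not anticipate any genuine obstacle; the only conceptual point worth stressing in the write-up is that the strengthening from ``local order continuity on $U_c$'' to ``order continuity at $c$'' is entirely mediated by the clause in Definition~\ref{D:super order derivative} allowing $h_\alpha$ to range over all approach nets inside $\mathrm{dom}(f)$, rather than just inside a $D$-neighborhood. For this reason the lemma can legitimately be presented as a direct corollary of the proof of Lemma~\ref{L:diff is cont}.
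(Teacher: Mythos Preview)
Your proposal is correct and is exactly the approach the paper takes: the paper simply states that the lemma follows by rerunning the proof of Lemma~\ref{L:diff is cont}, and your write-up spells out precisely that argument, correctly noting that the super order differentiability hypothesis is what allows the net $(h_\alpha)_\alpha$ to range over all of $\mathrm{dom}(f)$ rather than just a $D$-neighborhood.
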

	
	Next we provide an analogue of Theorem~\ref{T: Diff rules}$(i)-(iii)$ for super order differentiable functions. Notice that, unlike Theorem~\ref{T: Diff rules}$(iii)$, no assumptions regarding the $D$-neighborhoods of $g$ and $f$ are required in the super chain rule below. The proof is similar to that of Theorem ~\ref{T: Diff rules}$(i)-(iii)$.
	
	\begin{theorem}\label{T: super Diff rules}
		Let $f\colon\mathrm{dom}(f)\to E$ and $g\colon \mathrm{dom}(g)\to E$ be functions. The following hold. 
		\begin{itemize}
			\item[$(i)$](super sum rule) If $c\in\mathrm{dom}(f)\cap\mathrm{dom}(g)$ and $f$ and $g$ are super order differentiable at $c$, then $f+g$ is super order differentiable at $c$. \vskip .2 cm
			
			\item[$(ii)$](super product rule) If $c\in\mathrm{dom}(f)\cap\mathrm{dom}(g)$ and $f$ and $g$ are super order differentiable at $c$, then $fg$ is super order differentiable at $c$. \vskip .2 cm
			
			\item[$(iii)$](super chain rule) Suppose that $f$ and $g$ are functions so that $\mathrm{range}(g) \subseteq \mathrm{dom}(f)$. If $g$ is order differentiable at $c$ and $f$ is super order differentiable at $c$, then $f\circ g$ is order differentiable at $c$. If in addition $g$ is super order differentiable at $c$, then $f\circ g$ is super order differentiable at $c$. \vskip .2 cm
		\end{itemize}
	\end{theorem}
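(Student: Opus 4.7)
My overall strategy is to mirror the proofs of Theorem~\ref{T: Diff rules}$(i)$--$(iii)$, but to carry out the same estimates without restricting to a $D$-neighborhood. In each case, the existence of an order derivative at $c$ is already granted by Theorem~\ref{T: Diff rules}, so the only new content is to verify that the defining inequality from Definition~\ref{D:order derivative} holds whenever $c+h_\alpha\in\mathrm{dom}(f+g)$, $\mathrm{dom}(fg)$, or $\mathrm{dom}(f\circ g)$ respectively, rather than merely when $c+h_\alpha$ lies in some specific $D$-neighborhood.

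For the super sum rule I would take an arbitrary $h_\alpha\to 0$ with $c+h_\alpha\in\mathrm{dom}(f)\cap\mathrm{dom}(g)$. Super order differentiability of $f$ and $g$ at $c$ yields nets $p_\gamma\downarrow 0$ and $l_\lambda\downarrow 0$ governing the usual estimates for all such $h_\alpha$, and the triangle inequality as in Theorem~\ref{T: Diff rules}$(i)$ produces the net $p_\gamma+l_\lambda\downarrow 0$ controlling the sum. The super product rule is handled in the same way: decomposing
\[
f(c+h_\alpha)g(c+h_\alpha)-f(c)g(c)-h_\alpha\bigl(f'(c)g(c)+f(c)g'(c)\bigr)
\]
exactly as in the proof of Theorem~\ref{T: Diff rules}$(ii)$, the three resulting pieces can be estimated using the super order differentiability of $f$ and $g$ together with a bound $|h_\alpha|\leq q_\beta$ for some $q_\beta\downarrow 0$, which controls $|f(c+h_\alpha)-f(c)|$ and $|g(c+h_\alpha)-g(c)|$.

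The super chain rule is the main event. For the first assertion I would fix a $D$-neighborhood $U_c$ witnessing the order differentiability of $g$ at $c$ and take $h_\alpha\to 0$ with $c+h_\alpha\in U_c$. By Lemma~\ref{L:diff is cont}, $g(c+h_\alpha)-g(c)\to 0$. The crucial observation is that
\[
g(c)+\bigl(g(c+h_\alpha)-g(c)\bigr)=g(c+h_\alpha)\in\mathrm{range}(g)\subseteq\mathrm{dom}(f),
\]
so the super order differentiability of $f$ at $g(c)$ applies directly to the net $g(c+h_\alpha)-g(c)$, with no need for any $D$-neighborhood $V_{g(c)}$. The rest of the estimate from the proof of Theorem~\ref{T: Diff rules}$(iii)$ then passes through verbatim, yielding the order derivative $f'(g(c))g'(c)$. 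For the second assertion, when $g$ is additionally super order differentiable at $c$, I would first observe that $\mathrm{range}(g)\subseteq\mathrm{dom}(f)$ forces $\mathrm{dom}(f\circ g)=\mathrm{dom}(g)$, and then run the same estimate for any $h_\alpha\to 0$ with $c+h_\alpha\in\mathrm{dom}(g)$, invoking Lemma~\ref{L:super diff is ord cont} to guarantee $g(c+h_\alpha)\to g(c)$ so that the super order differentiability of $f$ is again applicable.

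The main obstacle---and the very purpose of Definition~\ref{D:super order derivative}---is handling the composition when $g(c+h_\alpha)$ may escape any specified $D$-neighborhood of $g(c)$ for $f$. In Theorem~\ref{T: Diff rules}$(iii)$ that possibility is ruled out by the hypothesis $g(U_c)\subseteq V_{g(c)}$, whereas here the stronger differentiability property of $f$ absorbs it via a membership argument rather than a containment argument. No new technical ideas beyond careful domain bookkeeping are required.
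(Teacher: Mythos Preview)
Your proposal is correct and matches the paper's approach: the paper itself simply states that the proof is similar to that of Theorem~\ref{T: Diff rules}$(i)$--$(iii)$, and your write-up makes explicit precisely the modifications the paper has in mind. In particular, you have correctly isolated the key point in part $(iii)$, namely that super order differentiability of $f$ at $g(c)$ lets one feed in the net $g(c+h_\alpha)-g(c)$ directly (using only $g(c+h_\alpha)\in\mathrm{range}(g)\subseteq\mathrm{dom}(f)$), thereby dispensing with the hypothesis $g(U_c)\subseteq V_{g(c)}$ needed in Theorem~\ref{T: Diff rules}$(iii)$.
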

	
	At this point, one might suspect that the map $z\mapsto f(z)g(z)^{-1}$ is super order differentiable whenever $f$ and $g$ are. However, this is not the case in general. In fact, the map $z\mapsto f(z)g(z)^{-1}$ can even fail to be order continuous.
	
	\begin{remark}\label{R:inverse nets don't converge} It is not true in general that the map $z\mapsto z^{-1}$ on an Archimedean complex $\Phi$-algebra is order continuous. To demonstrate this fact, recall from Remark~\ref{R:conv iff bdd} that a net $(z_\alpha)_\alpha$ of positive invertible elements converging in order to a positive invertible element $z$ has the property that $z_\alpha^{-1}\to z^{-1}$ if and only if there exists an index $\alpha_0$ such that $\{|z_\alpha|^{-1}\colon \alpha\ge \alpha_0\}$ is order bounded. 
		With this logical equivalence in mind, consider the space $\mathbb{C}^\mathbb{N}$ of complex valued sequences. For $(k,l)\in\mathbb{N}\times\mathbb{N}$ define $f_{k,l}\in \mathbb{C}^\mathbb{N}$ by
		\[
		f_{k,l}(n):=\begin{cases}1&\mbox{if $1\le n\le k$}\\l^{-1}&\mbox{for all $n>k$}\end{cases},
		\]
		and by ordering the elements of $\mathbb{N}\times\mathbb{N}$ coordinatewise, we obtain the net $(f_{k,l})_{(k,l)}$. Furthermore, for $k\in\mathbb{N}$ define $g_k\in\mathbb{C}^\mathbb{N}$ by
		\[
		g_k(n):=\begin{cases}0&\mbox{if $1\le n\le k$}\\1&\mbox{for all $n>k$}\end{cases},
		\]
		and note that for the constant 1 sequence $e$ we have $|f_{k,l}-e|\le g_k$ for all $l\in\mathbb{N}$, so $f_{k,l}\to e$ as $g_k\downarrow 0$. It follows that there is no $(k_0,l_0)\in \mathbb{N}\times\mathbb{N}$ such that $\bigl\{f_{k,l}^{-1}\colon (k,l)\ge(k_0,l_0)\bigr\}$ is order bounded, since the subset $\bigl\{f_{k_0,l}^{-1}\colon l\ge l_0\bigr\}$ consists of sequences whose values on $n>k_0$ are unbounded. Hence $f_{k,l}^{-1}\not\to e$ by what was discussed in the first part of this remark.
	\end{remark} 
	
	Despite the lack of order continuity for the map $z\mapsto f(z)g(z)^{-1}$ in general, we prove in Proposition~\ref{P: sigma-continuity of z^{-1}} that this map, when defined on a universally complete complex vector lattice, is always $\sigma$-order continuous. We stress here that universally complete complex vector lattices are Archimedean complex $\Phi$-algebras, as noted in \cite{RoeSch}.
	
	It is readily checked that any band in a universally complete complex vector lattice $E$ is itself a universally complete complex vector lattice.
	
	\begin{notation}
		Suppose $E$ is universally complete. Given $z\in E$, we denote by $z^\ast$ the multiplicative inverse of $z$ in the band $B_z$ generated by $z$ in $E$.
	\end{notation}
	
	\begin{proposition}\label{P: sigma-continuity of z^{-1}}
		If $E$ is universally complete, then the function $f\colon E^{-1}\to E^{-1}$ defined by $f(z):=z^{-1}$ is $\sigma$-order continuous.
	\end{proposition}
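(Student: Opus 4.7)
The plan is to combine the algebraic identity
\[
z_n^{-1} - z^{-1} = z^{-1}(z - z_n)z_n^{-1},
\]
giving $|z_n^{-1} - z^{-1}| = |z|^{-1}|z_n|^{-1}|z_n - z|$, with a band decomposition that compensates for the potential unboundedness of $|z_n|^{-1}$ highlighted in Remark~\ref{R:inverse nets don't converge}. The universal completeness of $E$ will be used to assemble the estimates on the two bands into a single regulator.

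I would start by fixing a regulator $p_m \downarrow 0$ for $z_n \to z$, with $|z_n - z| \leq p_m$ for $n \geq n_m$, and reindex so that $n_m = m$. For each $m$, let $P_m$ be the band projection onto the band generated by $(p_m - \tfrac{1}{2}|z|)^+$ and set $Q_m := I - P_m$. On the ``good'' band $Q_m E$ we have $Q_m p_m \leq \tfrac{1}{2}Q_m|z|$, hence $Q_m|z_n| \geq \tfrac{1}{2}Q_m|z|$ for $n \geq m$; invertibility of $Q_m|z|$ inside $Q_m E$ (since $z\in E^{-1}$ makes $|z|$ a weak order unit of $E$) then yields
\[
Q_m|z_n^{-1} - z^{-1}| \leq 2\, Q_m|z|^{-2}\, p_m \to 0 \quad \text{in order}.
\]

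For the ``bad'' band $P_m E$, note that $(p_m - \tfrac{1}{2}|z|)^+ \downarrow 0$, so $P_m u \downarrow 0$ for every $u \in E_+$. Using universal completeness of $E$ I would form $G_m := \sup_{n \geq m} P_m|z_n^{-1} - z^{-1}|$ in $E$ and argue $G_m \downarrow 0$; combined with the good-band estimate this produces a single regulator decreasing to $0$ that dominates $|z_n^{-1} - z^{-1}|$ for $n \geq m$. The main obstacle is precisely this last step: the family $\{|z_n^{-1}| : n \geq m\}$ may be unbounded on $P_m E$, so verifying that $G_m$ is a well-defined element of $E$ (rather than lying only in the sup-completion) and that $G_m \downarrow 0$ will require either careful use of the lateral and Dedekind completeness of $E$, or invoking the Maeda--Ogasawara representation $E \cong C^\infty(K,\mathbb{C})$ for a Stonean space $K$, under which $\sigma$-order convergence corresponds to pointwise convergence on a comeager subset of $K$ and the claim reduces to the scalar fact that $z_n(x) \to z(x) \neq 0$ implies $z_n(x)^{-1} \to z(x)^{-1}$.
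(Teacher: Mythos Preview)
Your proposal has a genuine gap exactly where you flag it: the formation of $G_m := \sup_{n \geq m} P_m|z_n^{-1} - z^{-1}|$ in $E$. Universal completeness guarantees suprema only for order-bounded sets or for pairwise disjoint families, and the family $\{P_m|z_n^{-1}| : n \geq m\}$ is in general neither, so $G_m$ need not lie in $E$ at all. Neither of your suggested escapes is actually executed: the Maeda--Ogasawara route would succeed, but it is a substantial detour from an intrinsic order-theoretic argument, and ``careful use of lateral and Dedekind completeness'' is a hope rather than a proof. As it stands, the two-band split $E = Q_mE \oplus P_mE$ extracts no usable information on $P_mE$, so the argument stalls.

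The paper closes this gap by refining your good/bad split into a countable \emph{disjoint} decomposition. With $B_m$ the band generated by $(|c|-p_m)^+$ (which plays the role of your good band $Q_mE$), one forms the slices $\mathbb{Q}_m := \mathbb{P}_{B_m} - \mathbb{P}_{B_{m-1}}$. These are pairwise disjoint band projections summing to the identity, since $p_m\downarrow 0$ and $|c|$ is a weak order unit. On each slice one has, for all but finitely many $n$, the bound $\mathbb{Q}_m|z_n|^{-1} \leq \mathbb{Q}_m\bigl((|c|-p_m)^+\bigr)^{\!*}$; throwing in the finitely many exceptional terms yields a bound $u_m \in \mathbb{Q}_mE$ on $\mathbb{Q}_m|z_n|^{-1}$ valid for \emph{every} $n$. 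Now lateral completeness applies directly to the disjoint family $(u_m)_m$ and produces a single $u \in E_+$ with $|z_n|^{-1} \leq u$ for all $n$. The proof then concludes via the identity you already wrote down, $|z_n^{-1} - c^{-1}| \leq u\,|c|^{-1}|z_n - c|$. The missing idea in your sketch is precisely this slicing: it converts the intractable ``bad band'' into a disjoint-family problem where universal completeness can be invoked to manufacture the global bound on $|z_n|^{-1}$ that your approach lacks.
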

	
	\begin{proof}
		Let $c\in E^{-1}$, and let $(z_n)_{n\geq 0}$ be a sequence in $E^{-1}$ such that $z_n\to c$. Then there exists a sequence $(p_m)_{m\geq 0}$ with $p_m\downarrow 0$ such that for every $m\ge 0$, there exists an $N\ge 0$ such that for all $n\geq N$ we have $|z_n-c|\leq p_m$. For each $m\ge 1$, let $B_m$ denote the principal band generated by $(|c|-p_m)^+$, and set $B_0=\{0\}$. Furthermore, let $\mathbb{P}_m$ be the band projection onto $B_m$, and set $\mathbb{Q}_m=\mathbb{P}_m-\mathbb{P}_{m-1}$. Then for $m<n$ we have
		\[
		0\leq\mathbb{Q}_m(e)\wedge\mathbb{Q}_n(e)=(\mathbb{P}_m(e)-\mathbb{P}_{m-1}(e))\wedge(\mathbb{P}_n(e)-\mathbb{P}_{n-1}(e))\leq\mathbb{P}_{n-1}(e)\wedge(e-\mathbb{P}_{n-1}(e))=0,
		\]
		so $(\mathbb{Q}_m)_{m\ge 1}$ is a pairwise disjoint sequence of band projections.
		
		Let $m\in\mathbb{N}$ be arbitrary. Let $N$ be such that $|z_n-c|\leq p_m$ holds for all $n\geq N$. It follows that
		\[
		(|c|-p_m)^+\leq|z_n|
		\]
		holds for all $n\geq N$, and thus
		\[
		\mathbb{Q}_m|z_n^{-1}|\leq\mathbb{Q}_m((|c|-p_m)^+)^\ast
		\]
		holds for all $n\geq N$. If we define $u_m:=\left(\bigvee_{k\leq N}\mathbb{Q}_m|z_k^{-1}|\right)\vee\mathbb{Q}_m((|c|-p_m)^+)^\ast$ for each $m\in\mathbb{N}$, it follows that $\mathbb{Q}_m|z_n^{-1}|\leq u_m$ for all $n\geq 0$. Since $(u_m)_{m\geq 1}$ is a pairwise disjoint sequence, $u:=\sup_{m\ge 1}u_m$ exists in $E_+$ as $E$ is universally complete. For every $m\in\mathbb{N}$ and every $n \in \mathbb{N}_0$ we have that $\mathbb{Q}_m|z_n^{-1}|\leq u$, thus 
		\begin{align*}
			|z_n|^{-1}=\lim_{M\to\infty}\mathbb{P}_M|z_n|^{-1}=\lim_{M\to\infty}\sum_{m\leq M}\mathbb{Q}_m|z_n|^{-1}=\lim_{M\to\infty}\sup_{m\leq M}\mathbb{Q}_m|z_n|^{-1}\leq u.
		\end{align*}
		for each $n\in\mathbb{N}_{0}$. Finally, observe that
		\[
		|z_n^{-1}-c^{-1}|=|z_n-c||c^{-1}||z_n^{-1}|\leq|z_n-c||c^{-1}|u,
		\]
		proving that $z_n^{-1}\to c^{-1}$.
	\end{proof}
	
	The assumption of universal completeness in Proposition~\ref{P: sigma-continuity of z^{-1}} cannot be relaxed, as the following remark illustrates.
	
	\begin{remark}\label{uc is needed}
		Consider the Dedekind complete complex $\Phi$-algebra $\ell^\infty_\mathbb{C}$. We look at the sequence $(z_k)_{k\geq 0}$ defined by $z_k := f_{k,k}$ as in Remark~\ref{R:inverse nets don't converge}. Then $z_k \to e$ and $z_k^{-1} \in \ell^\infty_\mathbb{C}$ for every $k \in \mathbb{N}_0$, but $z_k^{-1} \not\to e$ as no tail of $(z_k^{-1})_{k\ge 0}$ is order bounded. Hence the map $z\mapsto z^{-1}$ is not $\sigma$-order continuous on the invertible elements of $\ell^\infty_\mathbb{C}$.
	\end{remark}
	
	The $\sigma$-order continuity of the map $z\mapsto z^{-1}$ on a universally complete space
	motivates the following definition.
	
	\begin{definition}\label{D:sigma super ord diff}
		We call a function $f\colon\mathrm{dom}(f)\to E$   $\sigma$-\emph{super order differentiable at $c$} if $f$ is order differentiable at $c$ and the inequality \eqref{E:order derivative} in Definition~\ref{D:order derivative} holds for all sequences $h_n\to 0$ such that $c+h_n\in\mathrm{dom}(f)$ for all $n\geq 0$.
	\end{definition}
	
	The notion of $\sigma$-super order differentiability allows us to recover a super quotient rule. We begin by proving that the map $z\to z^{-1}$ is $\sigma$-super order differentiable on universally complete complex vector lattices.
	
	\begin{lemma}\label{L:super derivative of x^{-1}} Let $E$ be universally complete, and define the function $f\colon E^{-1}\to E^{-1}$ by $f(z):=z^{-1}$. Then $f$ is $\sigma$-super order differentiable at $c$ for every $c\in E^{-1}$.
	\end{lemma}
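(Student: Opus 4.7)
The plan is to combine Lemma~\ref{L:derivative of x^{-1}} with the $\sigma$-order continuity of inversion established in Proposition~\ref{P: sigma-continuity of z^{-1}}. Since universal completeness implies uniform completeness, Lemma~\ref{L:derivative of x^{-1}} already gives that $f$ is order differentiable at every $c\in E^{-1}$ with $f'(c)=-c^{-2}$. What remains is to verify that the inequality in Definition~\ref{D:sigma super ord diff} holds for \emph{every} sequence $h_n\to 0$ with $c+h_n\in E^{-1}$, rather than only for sequences lying inside the specific $D$-neighborhood $\overset{\circ}{\Delta}(c,\tfrac{1}{2}|c|)$ exploited in the proof of Lemma~\ref{L:derivative of x^{-1}}.

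The key computation I would carry out is the algebraic identity
\[
(c+h_n)^{-1}-c^{-1}+h_nc^{-2}=h_n^2 c^{-2}(c+h_n)^{-1},
\]
obtained from $(c+h_n)^{-1}-c^{-1}=-h_nc^{-1}(c+h_n)^{-1}$ by commutativity of the complex $\Phi$-algebra $E$. Taking moduli and using $|z^{-1}|=|z|^{-1}$ then gives
\[
\bigl|f(c+h_n)-f(c)-h_n f'(c)\bigr|=|h_n|^2\,|c|^{-2}\,|c+h_n|^{-1}.
\]
To convert this into the desired estimate $\le |h_n|q_m$ with $q_m\downarrow 0$, I would invoke Proposition~\ref{P: sigma-continuity of z^{-1}} applied to the sequence $c+h_n\to c$, which yields $(c+h_n)^{-1}\to c^{-1}$. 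This produces a sequence $u_m\downarrow 0$ for which eventually $|c+h_n|^{-1}\le |c|^{-1}+u_m$. Combined with a regulator $p_m\downarrow 0$ controlling $|h_n|$, the decreasing positive sequence $q_m:=p_m|c|^{-2}(|c|^{-1}+u_m)$ satisfies $q_m\downarrow 0$ (being dominated by $p_m|c|^{-2}(|c|^{-1}+u_0)\downarrow 0$), and for each fixed $m$ one has eventually
\[
|h_n|^2\,|c|^{-2}\,|c+h_n|^{-1}\le |h_n|\,q_m,
\]
which is exactly the required inequality.

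The main obstacle is precisely the loss of the uniform estimate $|c+h_n|^{-1}\le 2|c|^{-1}$ that was available in the proof of Lemma~\ref{L:derivative of x^{-1}}. Without the $D$-neighborhood restriction, the sequence $c+h_n$ can wander far from $c$ in order, and the only substitute for the uniform bound is the $\sigma$-order continuity of inversion from Proposition~\ref{P: sigma-continuity of z^{-1}}. This is precisely why universal completeness is essential here; without it, Remark~\ref{uc is needed} shows that $(c+h_n)^{-1}$ may fail to converge to $c^{-1}$ and the argument breaks down.
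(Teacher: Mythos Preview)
Your proposal is correct and follows essentially the same route as the paper: the same algebraic identity yielding $|f(c+h_n)-f(c)+h_nc^{-2}|=|h_n|^2|c|^{-2}|c+h_n|^{-1}$, followed by an appeal to Proposition~\ref{P: sigma-continuity of z^{-1}} to control $|c+h_n|^{-1}$. The only minor difference is that the paper extracts from the \emph{proof} of Proposition~\ref{P: sigma-continuity of z^{-1}} the single uniform bound $u:=\sup_{n\ge 0}|c+h_n|^{-1}\in E_+$ and uses the regulator $|c|^{-2}up_m$, whereas you invoke the \emph{conclusion} of that proposition (convergence of inverses) and then pass back to eventual boundedness via $|c+h_n|^{-1}\le |c|^{-1}+u_m$; the two are interchangeable by Remark~\ref{R:conv iff bdd}.
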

	
	\begin{proof}
		For $c\in E^{-1}$, note that $\overset{\circ}{\Delta}(c,\frac{1}{2}|c|) \subseteq E^{-1}$. Indeed, if $z \in E$ such that $|z-c|\ll \frac{1}{2}|c|$, it follows that $\frac{1}{2}|c| \le |z|$. Hence $z \in E^{-1}$. Let $h_n\to 0$ be such that $c+h_n\in E^{-1}$ for all $n\in\mathbb{N}_{0}$. Similarly to what was shown in Proposition~\ref{P: sigma-continuity of z^{-1}}, we have that $u:=\sup_{n\ge 0}|c+h_n|^{-1}$ exists in $E_+$ as $E$ is universally complete. There exists a sequence $(p_m)_{m\geq 0}$ with $p_m\downarrow 0$ such that for every $m\ge 0$, there exists an $N\ge 0$ such that for all $n\geq N$ we have $|h_n|\leq p_m$. Thus for $m\ge 0$ there is an $N\ge 0$ such that 
		\begin{align*}
			|f(c+h_n)-f(c)+h_n c^{-2}|&=|-c^{-2}(c+h_n)^{-1}h_n c +c^{-2}(c+h_n)^{-1}h_n(c+h_n)|\\&=|h_n|^2|c|^{-2}|c+h_n|^{-1}\\&\le |h_n||c|^{-2}u p_m
		\end{align*}
		for all $n\ge N$. It follows that $f$ is $\sigma$-super order differentiable at $c$.  \end{proof}
	
	\begin{theorem}\label{T: super quotient rule}(super quotient rule)
		Let $E$ be universally complete and let $f\colon\mathrm{dom}(f)\to E$ and $g\colon \mathrm{dom}(g)\to E$ be functions. Suppose $c\in\mathrm{dom}(f)\cap\mathrm{dom}(g)$ and $f$ and $g$ are $\sigma$-super order differentiable at $c$ and $\mathrm{range}(g) \subseteq E^{-1}$. Then the quotient map $z\mapsto f(z)g(z)^{-1}$ is $\sigma$-super order differentiable at $c$.
	\end{theorem}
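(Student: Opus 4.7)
The plan is to factor the quotient as $z\mapsto f(z)\cdot g(z)^{-1}$ and deduce $\sigma$-super order differentiability from $\sigma$-super analogues of the product and chain rules together with Lemma~\ref{L:super derivative of x^{-1}}, which provides $\sigma$-super order differentiability of the inversion map $\iota\colon z\mapsto z^{-1}$ on $E^{-1}$.

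First, I would formally record $\sigma$-super versions of Theorem~\ref{T: super Diff rules}$(ii)$--$(iii)$. Their proofs are obtained by reading those of Theorem~\ref{T: super Diff rules}$(ii)$--$(iii)$ with every net $(h_\alpha)_\alpha$ satisfying $c+h_\alpha\in\mathrm{dom}(\cdot)$ replaced by a sequence $(h_n)_{n\ge 0}$ with $c+h_n\in\mathrm{dom}(\cdot)$, and every decreasing regulator net $(q_\beta)_\beta$ replaced by a decreasing sequence $(q_m)_{m\ge 0}$. The product rule argument transfers verbatim. The chain rule additionally requires the sequential analogue of Lemma~\ref{L:super diff is ord cont}, namely that $\sigma$-super order differentiability at $c$ implies $\sigma$-order continuity at $c$; this is proven exactly as Lemma~\ref{L:super diff is ord cont} by restricting to sequences throughout. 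The sequential continuity guarantees that for any $h_n\to 0$ with $c+h_n\in\mathrm{dom}(g)$, the increments $k_n:=g(c+h_n)-g(c)$ satisfy $k_n\to 0$, so that the $\sigma$-super hypothesis on the outer function at $g(c)$ can be applied to $(k_n)_{n\ge 0}$.

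With these tools in place, the argument runs in two steps. Since $\mathrm{range}(g)\subseteq E^{-1}=\mathrm{dom}(\iota)$, applying the $\sigma$-super chain rule to $\iota\circ g$, using Lemma~\ref{L:super derivative of x^{-1}} for $\iota$ at $g(c)$, yields that $z\mapsto g(z)^{-1}$ is $\sigma$-super order differentiable at $c$ with derivative $-g'(c)g(c)^{-2}$. The $\sigma$-super product rule applied to $f$ and $z\mapsto g(z)^{-1}$ then shows that $z\mapsto f(z)g(z)^{-1}$ is $\sigma$-super order differentiable at $c$, with the expected derivative $\bigl(f'(c)g(c)-f(c)g'(c)\bigr)g(c)^{-2}$.

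The only non-mechanical step is the $\sigma$-super chain rule: one must construct the regulator purely from the sequential data, without any appeal to $D$-neighborhoods (as was needed in Theorem~\ref{T: Diff rules}$(iii)$). Once the sequential continuity is established for $\sigma$-super order differentiable functions, however, the estimates align line for line with the proof of Theorem~\ref{T: Diff rules}$(iii)$, and the remainder is routine bookkeeping.
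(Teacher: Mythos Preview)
Your proposal is correct and follows essentially the same route as the paper: apply a $\sigma$-super chain rule to $\iota\circ g$ using Lemma~\ref{L:super derivative of x^{-1}}, then a $\sigma$-super product rule to $f$ and $g^{-1}$. The paper's proof simply cites Theorem~\ref{T: super Diff rules}$(ii)$--$(iii)$ and Lemma~\ref{L:super derivative of x^{-1}} directly, implicitly relying on the $\sigma$-super analogues you take the trouble to spell out; your version is in fact more careful on this point, since the net-based super rules as stated do not literally apply when the outer map $\iota$ is only $\sigma$-super order differentiable.
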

	
	\begin{proof}
		Let $h_n\to 0$ be so that $c+h_n\in\mathrm{dom}(f/g)$ for every $n\in\mathbb{N}_{0}$. By Lemma~\ref{L:super derivative of x^{-1}} and the super chain rule Theorem~\ref{T: super Diff rules}$(iii)$ it follows that the function $z\mapsto g(z)^{-1}$ is  $\sigma$-super order differentiable at $c$. It then follows from the super product rule Theorem~\ref{T: super Diff rules}$(ii)$ that $z\mapsto f(z)g(z)^{-1}$ is $\sigma$-super order differentiable at $c$.
	\end{proof}
	
	\section{improved Cauchy-Hadamard formulas}\label{S:CHF}
	
	We improve our Cauchy-Hadamard formulas from \cite{RoeSch} in this section and therefore focus solely on universally complete spaces, as we did in \cite{RoeSch}.
	
	\begin{notation}
		$E$ will denote a universally complete complex vector lattice in this section (with corresponding real part $F$ as already designated).
	\end{notation}
	
	We next provide some useful basic properties of $E$, which we use freely throughout the rest of the paper. Note that (ii) follows from (i), which is a consequence of \cite[Theorem~2.37]{AlipBurk}. Together, \cite[Theorem~2.44]{AlipBurk} and \cite[Theorem~18.13]{Zaanen1} imply (iii)--(v). The veracity of (vi) is explained in \cite[Remark~3.3]{RoeSch}.
	
	For all $z,w\in E$ and any order projection $\P$ on $E$,
	
	\begin{itemize}
		\item[(i)] $\P(zw)=z\P(w)$.
		\item[(ii)] $\P(zw)=\P(z)\P(w)$.
		\item[(iii)] $\mathbb{P}(|z|)=|\mathbb{P}(z)|$, that is, $\mathbb{P}$ is a complex vector lattice homomorphism.
		\item[(iv)] $\mathbb{P}(\sup A)=\sup[\mathbb{P}(A)]$ for all $\varnothing\neq A\subseteq F$ bounded above.
		\item[(v)] $\P$ is order continuous.
		\item[(vi)] $z$ is invertible if and only if $|z|$ is a weak order unit. 
	\end{itemize}
	
	In particular, $E$ is uniformly complete. Thus for every $x\in E_+$, and $n\in\N$, there exists a unique $y\in E_+$ such that $y^n=x$ (see \cite[Corollary~6]{BeuHuij}). This $n$th root of $x$ is denoted by $x^{1/n}$ in the remainder of this manuscript.
	
	\begin{remark}
		For sequences, our definition of order convergence seems different at first glance from the definition given in \cite[Ch.~4 S.~10]{littleZaanen} and used in \cite{RoeSch}, but it is in fact, equivalent if $E$ is Dedekind complete. Indeed, if $q_m \downarrow 0$ is such that for all $m\ge 0$ there exists $N\ge 0$ so that $|z_n - z|\le q_m$ whenever $n\ge N$, then for all $m\ge 0$, define $p_m:=\sup_{n\ge m}|z_n-z|$, which are well-defined elements in $E$ by Dedekind completeness. Clearly, the sequence $(p_m)_{m\ge 0}$ is downward directed and satisfies $|z_m-z|\le p_m$ for all $m\ge 0$. By Dedekind completeness once more, let $p := \inf_{m\ge 0}p_m$. For any $k \ge 0$ there is an $m_k$ such that $p_{m_k}\le q_k$ and so, we have $p \le \inf_{k\ge 0}p_{m_k} \le \inf_{k \ge 0 }q_k = 0$, showing that $p_m\downarrow 0$. Note that the converse implication clearly holds.
	\end{remark}
	
	For an order bounded sequence $(x_n)_{n\geq 0}$ in $F$, we as usual write
	\[
	\limsup_{n\to\infty}x_n:=\inf_{n\geq 0}\sup_{m\geq n}x_n\qquad \text{and}\qquad \liminf_{n\to\infty}x_n:=\sup_{n\geq 0}\inf_{m\geq n}x_n.
	\]
	Observe that $\limsup_{n\to\infty}x_n$ and $\liminf_{n\to\infty}x_n$ are the order limits of the sequences 
	\[
	\left(\sup_{m\geq n}x_n\right)_{n\geq 0}\qquad \textnormal{and}\qquad \left(\inf_{m\geq n}x_n\right)_{n\geq 0}, 
	\]
	respectively.
	
	We proceed with some notes on the sup-completion $F^s$ of $F$, a notion introduced in \cite[Section 1]{Donner}. Using the terminology from \cite[Definition 1.1]{Donner}, a \textit{cone} $(C,+)$ is a commutative monoid, that is closed under a nonnegative real number scalar multiplication, for which the following hold:
	
	\begin{itemize}
		\item[(i)] $\alpha(x+y)=\alpha x+\alpha y\quad (\alpha\in[0,\infty),\ x,y\in C)$,
		\item[(ii)] $(\alpha+\beta)x=\alpha x+\beta x\quad (\alpha,\beta\in[0,\infty),\ x\in C)$,
		\item[(iii)] $\alpha(\beta x)=(\alpha\beta)x\quad (\alpha,\beta\in[0,\infty),\ x\in C)$,
		\item[(iv)] $1x=x\quad (x\in C)$, and
		\item[(v)] $0x=0\quad (x\in C)$.
	\end{itemize}
	In (v) above, the first $0$ denotes the real number zero, while the second $0$ designates the identity element of $(C,+)$.
	
	Following \cite{Donner}, we will from now on denote a cone $(C,+)$ by $C$, for short. Given a cone $C$, we denote as in \cite{Donner} the set
	\[
	C_0:=\{x\in C\colon x\ \textnormal{possesses an inverse under}\ +\}.
	\]
	The nonnegative real number scalar multiplication restricted to $C_0$ can be extended to all of the real numbers by defining
	\[
	\alpha x:=-\alpha(-x)\quad \bigl(\alpha\in(-\infty,0),\ x\in C_0\bigr).
	\]
	It is readily checked that $C_0$ is a vector space under $+$ and this expanded scalar multiplication.
	
	A cone $C$ possessing a partial ordering $\leq$ is called an \textit{ordered cone} if the following hold:
	
	\begin{itemize}
		\item[(vi)] $x\leq y$ implies $x+z\leq y+z\quad (x,y,z\in C)$, and
		\item[(vii)] $x\leq y$ implies $\alpha x\leq \alpha y\quad \bigl(\alpha\in[0,\infty),\ x,y\in C\bigr)$.
	\end{itemize}
	An ordered cone that is a lattice with respect to its partial ordering is called a \textit{lattice cone}. A lattice cone $C$ is called \textit{Dedekind complete} if every nonempty subset of $C$ which is bounded above (respectively, bounded below) possesses a supremum (respectively, infimum) in $C$.
	
	\begin{proposition}\label{P:sup comp}\cite[Theorem~1.4]{Donner}
		There exists an essentially unique cone $F^s$ (called the \textit{sup-completion} of $F$) for which the following hold.
		
		\begin{itemize}
			\item[$(i)$] $F^s$ is Dedekind complete.
			
			\item[$(ii)$] $F=(F^s)_0$ with coinciding algebraic and order structures.
			
			\item[$(iii)$] For each $y\in F^s$, we have
			\[
			y=\sup\{x\in F\colon  x\leq y\}.
			\]
			
			\item[$(iv)$] $z+(x\wedge y)=(z+x)\wedge(z+y)\quad (x\in F,\ y,z\in F^s)$.
			
			\item[$(v)$] $\{y\in F^s\colon \textnormal{there exists}\ x\in F\ \textnormal{such that}\ y\leq x\}\subseteq F$.
			
			\item[$(vi)$] $F^s$ has a largest element.
			
			\item[$(vii)$] For any $\varnothing\neq A,B\subseteq F^s$ for which $\sup A=\sup B$, we have
			\[
			\underset{a\in A}{\sup}(a\wedge x)=\underset{b\in B}{\sup}(b\wedge x)
			\]
			for any $x\in F$.
		\end{itemize}
	\end{proposition}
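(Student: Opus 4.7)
The plan is to construct $F^s$ concretely in the spirit of Donner, realizing it as the collection of all nonempty, downward closed, upward directed subsets $A\subseteq F$, ordered by inclusion; each such $A$ encodes the ``formal supremum'' of its elements. The embedding $F\hookrightarrow F^s$ sends $x\in F$ to the principal ideal $\downarrow x:=\{y\in F\colon y\le x\}$, the largest element required by (vi) is $F$ itself, and ``new'' elements of $F^s$ arise from unbounded upward directed down-sets.

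With this model in hand I would define $A+B:=\downarrow\{a+b\colon a\in A,\ b\in B\}$, scalar multiplication $\alpha A:=\downarrow\{\alpha a\colon a\in A\}$ for $\alpha>0$ and $0\cdot A:=\downarrow 0$, meet $A\wedge B:=A\cap B$, and join $A\vee B:=\downarrow(A\cup B)$. Routine checking then reduces the cone and ordered-cone axioms to set-theoretic manipulations. Dedekind completeness (i) follows because arbitrary intersections, and bounded unions, of down-closed directed sets are again down-closed and directed. Property (iii) is tautological since every $A\in F^s$ equals $\bigcup_{x\in A}\downarrow x$. Property (v) uses Dedekind completeness of $F$: if $A\subseteq\downarrow x$ in $F^s$, then $A$ is bounded above in $F$, so $\sup A$ exists in $F$ and $A=\downarrow(\sup A)$ is a principal ideal. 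For (ii), $A$ admits an additive inverse iff $A=\downarrow x$ for some $x\in F$, since any inverse $B$ of $A$ forces each $a\in A$ to satisfy $a\le -b$ for some $b\in B$, making $A$ bounded above and thus principal by (v); the algebraic and order structures of $F$ and $(F^s)_0$ then coincide by construction.

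The main obstacle is the modular identity (iv) and the consistency condition (vii). For (iv), the inclusion $\subseteq$ is immediate, while $\supseteq$ follows from the fact that the invertibility of $\downarrow z$ established in (ii) makes the translation $A\mapsto\downarrow z+A$ an order-automorphism of $F^s$ (with inverse $A\mapsto\downarrow(-z)+A$), and any such order-automorphism preserves meets. Property (vii) is the subtlest: if $\sup A=\sup B$ in $F^s$ and $x\in F$, then the truncations $a\wedge x$ and $b\wedge x$ land in $F$, and one shows that $\sup_{a\in A}(a\wedge x)=\sup_{b\in B}(b\wedge x)$ by reducing to the case $A=\{\sup A\}$ via (iv), which guarantees that truncation by $x$ commutes with taking suprema of directed families. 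Essential uniqueness follows from a universal-property argument: any cone satisfying (i)--(vii) is canonically isomorphic to the above model via the map $y\mapsto\{x\in F\colon x\le y\}$, whose inverse sends $A$ to $\sup A$, and whose well-definedness and additivity on the constructed cone are exactly what (iii), (iv), and (vii) supply.
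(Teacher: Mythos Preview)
The paper does not give a proof of this proposition at all; it is quoted directly from Donner with only the citation \cite[Theorem~1.4]{Donner}. Your sketch is an outline of Donner's original construction of $F^s$ as a cone of directed down-sets, so in spirit you are reproducing the source the paper defers to rather than offering a genuinely different route.

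That said, a few steps in your outline do not go through as written. Your join $A\vee B:=\downarrow(A\cup B)$ need not be upward directed: if $a\in A$ and $b\in B$ are incomparable and $a\vee b\notin A\cup B$, the set has no common upper bound for $a,b$. The correct join is $\downarrow\{a\vee b:a\in A,\ b\in B\}$. Similarly, for Dedekind completeness a bounded union of directed down-sets is not directed in general; you must pass to the directed closure of the union. Finally, your derivation of (vii) from (iv) is not convincing: property (iv) concerns how \emph{translation} by an element of $F^s$ interacts with a binary meet, whereas (vii) asserts that \emph{truncation} $a\mapsto a\wedge x$ commutes with arbitrary suprema. In the down-set model, (vii) is established directly by showing that for a family $\{A_i\}\subseteq F^s$ with supremum $S$ and $x\in F$ one has $\sup_i(A_i\cap\downarrow x)=S\cap\downarrow x$; this uses the explicit description of $S$ as the directed closure of $\bigcup_i A_i$ together with the lattice identity $(a\vee b)\wedge x=(a\wedge x)\vee(b\wedge x)$ in $F$, not (iv).
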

	
	As noted in \cite[(P6)]{AzouNasri}, the following immediate consequence of Proposition~\ref{P:sup comp}$(vii)$ holds.
	
	\begin{proposition}\label{P: sup comp 2}
		For any $\varnothing\neq A\subseteq F^s$ and $x\in F$, we have
		\[
		\underset{a\in A}{\sup}(a\wedge x)=(\sup A)\wedge x.
		\]
	\end{proposition}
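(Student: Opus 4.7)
The plan is to deduce this by a direct application of Proposition~\ref{P:sup comp}$(vii)$ with a singleton set in the role of $B$. First I would note that $\sup A$ exists in $F^s$: indeed, $F^s$ has a largest element by Proposition~\ref{P:sup comp}$(vi)$, so $A$ is bounded above, and $F^s$ is Dedekind complete by Proposition~\ref{P:sup comp}$(i)$, so $\sup A$ is a well-defined element of $F^s$.

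Next I would set $B := \{\sup A\}$. Then trivially $\sup B = \sup A$, so the hypothesis of Proposition~\ref{P:sup comp}$(vii)$ is satisfied for the pair $(A,B)$. Applying that proposition yields
\[
\sup_{a\in A}(a\wedge x) \;=\; \sup_{b\in B}(b\wedge x) \;=\; (\sup A)\wedge x,
\]
which is precisely the claimed identity.

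There is no real obstacle here; the only care needed is in confirming that $\sup A$ is guaranteed to exist as an element of $F^s$ (not merely in some enlargement), which is why I would invoke parts $(i)$ and $(vi)$ of Proposition~\ref{P:sup comp} at the outset. Everything else is a one-line substitution into $(vii)$.
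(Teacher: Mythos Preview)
Your argument is correct and is exactly the ``immediate consequence of Proposition~\ref{P:sup comp}$(vii)$'' that the paper alludes to without spelling out. The only addition you make is the explicit justification that $\sup A$ exists in $F^s$ via parts $(i)$ and $(vi)$, which the paper records separately just after the statement.
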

	
	A salient observation for our purposes is that Proposition~\ref{P:sup comp}$(i)$\&$(vi)$ together imply that $\sup A$ exists in $F^s$ for any nonempty $A\subseteq F^s$.
	
	We utilize the following proposition from \cite[Theorem~1.4]{Donner}.
	
	\begin{proposition}\label{P:dist lattice}
		$F^s$ is a distributive lattice.
	\end{proposition}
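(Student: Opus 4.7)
The plan is to establish the distributive identity $a \wedge (b \vee c) = (a \wedge b) \vee (a \wedge c)$ for arbitrary $a, b, c \in F^s$; the dual law then follows by standard lattice-theoretic reasoning once this inequality is in hand. The inequality $(a \wedge b) \vee (a \wedge c) \leq a \wedge (b \vee c)$ holds in every lattice, so the substantive task is the reverse direction.

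To obtain $a \wedge (b \vee c) \leq (a \wedge b) \vee (a \wedge c)$, I would invoke the density property Proposition~\ref{P:sup comp}$(iii)$, which represents any element of $F^s$ as the supremum of its $F$-minorants. It therefore suffices to prove that every $t \in F$ with $t \leq a \wedge (b \vee c)$ satisfies $t \leq (a \wedge b) \vee (a \wedge c)$. Fix such a $t$. Since $t \leq b \vee c$, we have $t = t \wedge (b \vee c)$, and Proposition~\ref{P: sup comp 2} applied to the two-element set $A = \{b, c\} \subseteq F^s$ with $x = t \in F$ gives
\[
t \;=\; t \wedge (b \vee c) \;=\; (t \wedge b) \vee (t \wedge c).
\]
From $t \leq a$ we deduce $t \wedge b \leq a \wedge b$ and $t \wedge c \leq a \wedge c$, whence $t \leq (a \wedge b) \vee (a \wedge c)$, as required.

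The crux of the argument is that Proposition~\ref{P: sup comp 2} already encodes the distributivity of $\wedge$ over finite joins in $F^s$ whenever the meet-factor lies in $F$; combining this with the density of $F$ in $F^s$ from Proposition~\ref{P:sup comp}$(iii)$ suffices to lift distributivity to all of $F^s$. I anticipate no substantial obstacle beyond recognizing that Proposition~\ref{P: sup comp 2} specialized to the two-point set $\{b,c\}$ is exactly the tool needed, and there are no well-definedness concerns since $t \wedge b$ and $t \wedge c$ both lie in $F$ by Proposition~\ref{P:sup comp}$(v)$.
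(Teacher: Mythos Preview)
Your argument is correct. The paper, however, does not supply its own proof of this proposition: it simply records the statement as a consequence of \cite[Theorem~1.4]{Donner}, the same source from which the defining properties in Proposition~\ref{P:sup comp} are drawn. What you have done is give a short self-contained derivation of distributivity from two of those listed properties, namely the density of $F$ in $F^s$ (Proposition~\ref{P:sup comp}$(iii)$) and the meet-distributivity over arbitrary suprema against elements of $F$ (Proposition~\ref{P: sup comp 2}). This is a genuine, if modest, addition: it shows that distributivity is not an independent axiom but follows from the other structural features of the sup-completion already recorded in the paper. The closing remark that $t \wedge b, t \wedge c \in F$ is true but inessential, since the entire argument can be read inside $F^s$.
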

	
	Next we begin to enhance our Cauchy-Hadamard formulas originally given in \cite{RoeSch} to be able to consider unbounded spectra of convergence. For this task, we rely on the sup-completion of $F$. By upgrading our Cauchy-Hadarmard formulas, we are able to prove that analytic functions on $E$ are holomorphic. We begin with some relevant definitions. For more information on series and power series on universally complete complex vector lattices, we refer the reader to \cite{RoeSch}.
	
	\begin{definition}
		We say that a series $\sum_{n=0}^\infty a_n$ \textit{converges in order} if the sequence of partial sums $(\sum_{n=0}^ma_m)_{m\geq 0}$ converges in order in $E$. If $\sum_{n=0}^\infty|a_n|$ converges in order, then we say that $\sum_{n=0}^\infty a_n$ \textit{converges absolutely in order}.
	\end{definition}
	
	\begin{definition}
		A \textit{power series} on $E$, centered at $c\in E$, is of the form $\sum_{n=0}^\infty a_n(z-c)^n$, where $a_n\in E$ for every $n\in\mathbb{N}_{0}$ and $z$ is a variable in $E$. We say that a power series $\sum_{n=0}^\infty a_n(z-c)^n$ \textit{converges uniformly in order} on a region $D\subseteq E$ if there exists a sequence $p_m\downarrow 0$ such that for every $m\in\mathbb{N}_{0}$, there exists a $K\in\mathbb{N}_{0}$ such that
		\[
		\underset{z\in D}{\sup}\left|\sum_{n=0}^{k}a_n(z-c)^n-\sum_{n=0}^\infty a_n (z-c)^n\right|\leq p_m
		\]
		holds for every $k\geq K.$
	\end{definition}
	
	\begin{definition}
		Let $\sum_{n=0}^\infty a_n(z-c)^n$ be a power series. We define the \textit{spectrum of convergence} of the power series as 
		\[
		\Omega:=\left\{r\in E_+\colon \sum_{n=0}^\infty a_n(z-c)^n \text{ converges uniformly in order on}\ \bar{\Delta}(c,r) \right\}.
		\]
		We also define $\rho:=\sup\Omega$ to be the \emph{radius of convergence} of the power series, which exists in the sup-completion $F^s$ of $F$. 
	\end{definition}
	
	We next provide a series of remarks, definitions, and results which we use to improve the Cauchy-Hadarmard formulas given in \cite[Theorem~3.11]{RoeSch}.
	
	\begin{remark}
		By \cite[Proposition~3]{Azouzi}, any band projection $\mathbb{P}\colon F\to F$ can be extended to a left-order continuous, additive, and positively homogeneous map $\bar{\mathbb{P}}\colon F^s\to F^s$ via the formula
		\[
		\bar{\mathbb{P}}(y):=\sup\{\mathbb{P}(x)\colon x\in F,\ x\leq y\}\qquad (y\in F^s).
		\]
		Given $0\leq u\in F^s$, we define as in \cite{Azouzi} a map $\mathbb{P}_u\colon F\to F$ by
		\[
		\mathbb{P}_u(x):=\underset{n\ge 1}{\sup}\{x\wedge nu\}
		\]
		for all $x\in F_+$ and then
		\[
		\mathbb{P}_u(x):=\mathbb{P}_u(x^+)-\mathbb{P}_u(x^-)
		\]
		for all $x\in F$. By \cite[Lemma 4]{Azouzi}, the map $\mathbb{P}_u$ is a band projection on $F$; in fact, $\mathbb{P}_u=\mathbb{P}_{\mathbb{P}_u(e)}$. We denote the corresponding band $B_{\mathbb{P}_u(e)}$ by $B_u$, for short. Recall that $B_u$ is itself a (real) universally complete vector lattice.
	\end{remark}
	
	\begin{lemma}\label{L:B_u^s}
		Let $0\leq u\in F^s$. Then  $\bar{\mathbb{P}}_u(u)=u$ and $u\in B_u^s$.
	\end{lemma}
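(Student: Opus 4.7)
The plan is to handle the two assertions in sequence, as the first provides the identity that makes the second natural.

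For $\bar{\mathbb{P}}_u(u) = u$, I would begin from Proposition~\ref{P:sup comp}$(iii)$, which gives $u = \sup\{x \in F : x \leq u\}$. Since $u \geq 0$, taking positive parts does not change this supremum, so $u = \sup\{x \in F_+ : x \leq u\}$. The key observation is that for $x \in F_+$ with $x \leq u$, we have $x \leq nu$ for every $n \geq 1$, hence $x \wedge nu = x$, and therefore
\[
\mathbb{P}_u(x) = \sup_{n \geq 1}(x \wedge nu) = x.
\]
This gives the lower bound
\[
\bar{\mathbb{P}}_u(u) = \sup\{\mathbb{P}_u(x) : x \in F,\ x \leq u\} \geq \sup\{x : x \in F_+,\ x \leq u\} = u.
\]
For the reverse inequality, I would use that $\mathbb{P}_u$ is a band projection on $F$: for any $x \in F$ with $x \leq u$, one has $\mathbb{P}_u(x) \leq \mathbb{P}_u(x^+) \leq x^+$, and since $u \geq 0$, also $x^+ \leq u$. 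Taking the supremum over such $x$ yields $\bar{\mathbb{P}}_u(u) \leq u$.

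For $u \in B_u^s$, I would leverage what was just proved. Since every $x \in F_+$ with $x \leq u$ satisfies $\mathbb{P}_u(x) = x$, it lies in the band $B_u$. Consequently
\[
u = \sup\{x \in B_u : 0 \leq x \leq u\},
\]
the supremum being taken in $F^s$. Thus $u$ is realized as the supremum (in $F^s$) of a nonempty subset $A \subseteq B_u$. Because $B_u$ is itself a universally complete real vector lattice, its sup-completion $B_u^s$ is Dedekind complete with a largest element (Proposition~\ref{P:sup comp}), so $\sup_{B_u^s} A$ exists. Under the natural embedding $B_u^s \hookrightarrow F^s$ arising from the inclusion $B_u \hookrightarrow F$, this supremum is identified with $\sup_{F^s} A = u$, so $u$ belongs to the image of $B_u^s$ in $F^s$.

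The one delicate point — and the main obstacle — is justifying that the embedding $B_u^s \hookrightarrow F^s$ preserves suprema of subsets of $B_u$. I would address this by invoking the essential uniqueness in Proposition~\ref{P:sup comp}: the subset of $F^s$ consisting of suprema of nonempty subsets of $B_u$, equipped with the inherited order and cone structure, satisfies properties $(i)$–$(vii)$ of that proposition relative to $B_u$, and hence can be identified with $B_u^s$. Under this identification, the element $u$ lies in $B_u^s$.
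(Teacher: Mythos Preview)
Your proof is correct and follows essentially the same approach as the paper: both establish $\bar{\mathbb{P}}_u(u)=u$ via the observation that $x\wedge nu=x$ whenever $x\le u$, and both deduce $u\in B_u^s$ from the fact that $u$ is the supremum of elements of $B_u$. The paper simply asserts the final implication (``since $\mathbb{P}_u(x_\alpha)\in B_u$ for all $\alpha$, we conclude that $u\in B_u^s$''), whereas you are more scrupulous in flagging and addressing the identification of $B_u^s$ inside $F^s$; your resolution via the essential uniqueness clause of Proposition~\ref{P:sup comp} is the right way to make this precise.
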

	
	\begin{proof}
		Let $(x_\alpha)_\alpha$ be a net in $F$ such that $x_\alpha\uparrow u$. From the left-order continuity of $\bar{\mathbb{P}}_u$ and Proposition~\ref{P:sup comp}$(iii)$ we have
		\begin{align*}
			\mathbb{P}_u(x_\alpha)=\bar{\mathbb{P}}_u(x_\alpha)\uparrow\bar{\mathbb{P}}_u(u)&=\sup\{\mathbb{P}_u(x)\colon x\in F,\ x\leq u\}\\
			&=\sup\Bigl\{\underset{n\ge 1}{\sup}\{x\wedge nu\}\colon x\in F,\ x\leq u\Bigr\}\\
			&=\sup\{x\colon x\in F,\ x\leq u\}\\
			&=u.
		\end{align*}
		Since $\mathbb{P}_u(x_\alpha)\in B_u$ for all $\alpha$, we conclude that $u\in B_u^s$.
	\end{proof}
	
	Next we provide an alternative mechanism for dividing positive elements of a sup-completion into finite and infinite parts to that found in \cite{AzouNasri}. As stated in the introduction, this new approach perhaps yields simpler formulas, which facilitate many of the proofs in this section.
	
	\begin{definition}
		For $0\leq u\in F^s$, we define
		\[
		u_\infty:=\underset{n\ge 1}{\inf} n^{-1}u
		\]
		and
		\[
		u_{\mathcal{F}}:=\sup\{x-x\wedge u_\infty:\ x\in F_+,\ x\leq u\}.
		\]
	\end{definition}
	
	Note that $x\wedge u_\infty\in F$ for every $x\in F$ by Proposition~\ref{P:sup comp}$(v)$. Ergo the element $u_{\mathcal{F}}$ is well-defined in $F^s$.
	
	The following lemma is required for our proof of Theorem~\ref{T:uF and u infty}. It is an immediate consequence of the definition of $u_\infty$.
	
	\begin{lemma}\label{L:muinfty}
		Given $0\leq u\in F^s$ and $m\in\mathbb{N}$, we have $mu_\infty=u_\infty$.
	\end{lemma}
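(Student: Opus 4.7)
The plan is to prove both inequalities $mu_\infty \leq u_\infty$ and $u_\infty \leq mu_\infty$ using only the cone axioms listed in the preliminaries, since $u_\infty$ lies in $F^s$ and may not admit an additive inverse, so I must avoid any step that would implicitly require subtraction.

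For the inequality $mu_\infty \leq u_\infty$, I would exploit that the sequence $(n^{-1}u)_{n\geq 1}$ is cofinal after rescaling: for every $k \geq 1$, I have $u_\infty \leq (mk)^{-1}u$ by definition of $u_\infty$ as an infimum, and then applying axiom (vii) (monotonicity of scalar multiplication) yields $m u_\infty \leq m(mk)^{-1}u = k^{-1}u$. Taking the infimum over $k \geq 1$ on the right gives $m u_\infty \leq u_\infty$.

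For the reverse inequality $u_\infty \leq m u_\infty$, I write $m = 1 + (m-1)$ and use axiom (ii) to obtain $m u_\infty = u_\infty + (m-1) u_\infty$. Since $0 \leq u_\infty$, axiom (vii) with $\alpha = m-1 \geq 0$ gives $0 = (m-1) \cdot 0 \leq (m-1) u_\infty$, and then axiom (vi) applied to this inequality yields $u_\infty = u_\infty + 0 \leq u_\infty + (m-1) u_\infty = m u_\infty$.

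Combining the two inequalities yields $m u_\infty = u_\infty$ in the Dedekind complete lattice cone $F^s$. There is no real obstacle here; the only subtlety worth flagging is the aforementioned need to stay within the cone operations because elements of $F^s \setminus F$ are not invertible under addition, which rules out the shortcut of cancelling $u_\infty$ on both sides.
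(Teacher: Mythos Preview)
Your argument is correct. The paper itself gives no proof, stating only that the lemma ``is an immediate consequence of the definition of $u_\infty$''; your two inequalities spell out precisely the cofinality argument the authors had in mind, with appropriate care that all manipulations stay within the ordered-cone axioms (i)--(vii) since $u_\infty$ need not lie in $F$.
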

	
	Our next result represents an alternative way of breaking positive elements of a sup-completion into disjoint finite and infinite parts to that found in \cite{AzouNasri}.
	
	\begin{theorem}\label{T:uF and u infty}
		Let $0\leq u\in F^s$. Then the following hold.
		
		\begin{itemize}
			\item[$(i)$] $u=u_{\mathcal{F}}+u_\infty$.
			
			\item[$(ii)$] $u_{\mathcal{F}}\wedge u_\infty=0$.
			
			\item[$(iii)$] $u_\infty=0$ or $u_\infty\in F^s\setminus F$.
			
			\item[$(iv)$] $u_{\mathcal{F}}\in F_+$.
		\end{itemize}
	\end{theorem}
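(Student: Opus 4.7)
My plan is to handle the four parts in the order (iii), (i), (iv), (ii), organizing everything around the band projection $\mathbb{P}_{u_\infty}$ on $F$. By Lemma~\ref{L:muinfty} the formula $\mathbb{P}_{u_\infty}(x)=\sup_{n\ge 1}\{x\wedge nu_\infty\}$ collapses to $\mathbb{P}_{u_\infty}(x)=x\wedge u_\infty$ for $x\in F_+$, so the complementary band projection $\mathbb{Q}:=I_F-\mathbb{P}_{u_\infty}$ acts on $x\in F_+$ by $\mathbb{Q}(x)=x-x\wedge u_\infty$ and satisfies $\mathbb{Q}(x)\wedge u_\infty=\mathbb{P}_{u_\infty}(\mathbb{Q}(x))=0$. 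In particular $u_{\mathcal{F}}=\sup\{\mathbb{Q}(x)\colon x\in F_+,\,x\le u\}=\bar{\mathbb{Q}}(u)$. Part (iii) is now instantaneous: if $u_\infty\in F=(F^s)_0$, then subtracting $u_\infty$ from both sides of the identity $u_\infty=2u_\infty$ (Lemma~\ref{L:muinfty}) inside the group $(F^s)_0$ yields $u_\infty=0$.

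For part (i), the forward inequality $u\le u_{\mathcal{F}}+u_\infty$ comes from the decomposition $x=\mathbb{Q}(x)+\mathbb{P}_{u_\infty}(x)\le u_{\mathcal{F}}+u_\infty$ for every $x\in F_+$ with $x\le u$, combined with $u=\sup\{x\in F_+\colon x\le u\}$ from Proposition~\ref{P:sup comp}(iii). The reverse $u_{\mathcal{F}}+u_\infty\le u$ is the delicate step: for $x\in F_+$ with $x\le u$ and $y\in F_+$ with $y\le u_\infty$, the disjointness $\mathbb{Q}(x)\wedge y\le\mathbb{Q}(x)\wedge u_\infty=0$ in $F$ gives $\mathbb{Q}(x)+y=\mathbb{Q}(x)\vee y\le u$; passing the two suprema (over $x$ and $y$) through the addition in $F^s$, justified by the left-order continuity of addition on directed sets developed in \cite{Donner}, then yields $u_{\mathcal{F}}+u_\infty=\sup_{x,y}(\mathbb{Q}(x)+y)\le u$.

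For part (iv), I would first establish $\bar{\mathbb{P}}_{u_\infty}(u_{\mathcal{F}})=0$: for $z\in F_+$ with $z\le u_{\mathcal{F}}=\bar{\mathbb{Q}}(u)$, Proposition~\ref{P: sup comp 2} gives $z=z\wedge u_{\mathcal{F}}=\sup_x(z\wedge\mathbb{Q}(x))$ in $F$, and since $z\wedge\mathbb{Q}(x)\wedge u_\infty\le\mathbb{Q}(x)\wedge u_\infty=0$, order continuity of $\mathbb{P}_{u_\infty}$ on $F$ yields $z\wedge u_\infty=\mathbb{P}_{u_\infty}(z)=0$. Combining this with $(u_{\mathcal{F}})_\infty\le u_\infty$ (from $u_{\mathcal{F}}\le u$) and the identity $\bar{\mathbb{P}}_{u_\infty}(y)=y$ whenever $y\in F^s_+$ satisfies $y\le u_\infty$ (since then $\mathbb{P}_{u_\infty}(x)=x$ for all $x\in F_+$ with $x\le y$, so Proposition~\ref{P:sup comp}(iii) applies) forces $(u_{\mathcal{F}})_\infty=\bar{\mathbb{P}}_{u_\infty}((u_{\mathcal{F}})_\infty)\le\bar{\mathbb{P}}_{u_\infty}(u_{\mathcal{F}})=0$; in a universally complete lattice this zero-infinite-part condition forces $u_{\mathcal{F}}\in F$. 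Part (ii) then follows: setting $z:=u_{\mathcal{F}}\wedge u_\infty$, which lies in $F_+$ by Proposition~\ref{P:sup comp}(v) since $z\le u_{\mathcal{F}}\in F$, Proposition~\ref{P: sup comp 2} gives $z=\sup_x(z\wedge\mathbb{Q}(x))$, and each $z\wedge\mathbb{Q}(x)\le u_\infty\wedge\mathbb{Q}(x)=0$ (using $z\le u_\infty$), whence $z=0$. The main obstacle I foresee is the reverse half of (i) together with the concluding step of (iv): both depend on subtler properties of $F^s$---left-order continuity of addition and the implication $y_\infty=0\Rightarrow y\in F$ for $y\in F^s_+$ when $F$ is universally complete---which are not explicitly recorded in the preliminaries and may require invoking \cite{Donner} directly, or a representation of $F$ on an extremally disconnected compact space.
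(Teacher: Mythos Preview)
Your band-projection framework is sound, and the arguments for (iii), the forward inequality in (i), and (ii)-given-(iv) are correct. The two obstacles you flag are genuine, though, and the paper's proof is organized precisely to avoid them.

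For the reverse inequality in (i), the paper sidesteps any appeal to left-order continuity of addition with the one-line estimate
\[
u_{\mathcal{F}}+u_\infty\;\le\; u+u_\infty\;=\;u+\inf_{n\ge1}n^{-1}u\;\le\;\inf_{n\ge1}(1+n^{-1})u\;=\;u,
\]
using only $u_{\mathcal{F}}\le u$ and the behaviour of scalar multiplication in $F^s$.

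The more substantial divergence is in (iv). Your argument reduces to the implication ``$y_\infty=0\Rightarrow y\in F$'' for $y\in F^s_+$, which is exactly the Corollary the paper deduces \emph{from} Theorem~\ref{T:uF and u infty}; establishing it independently does require a representation theorem or further structure from \cite{Donner}, as you suspect. The paper avoids this entirely by proving (ii) \emph{before} (iv): it computes $(x-x\wedge u_\infty)\wedge u_\infty=0$ directly from Proposition~\ref{P:sup comp}(iv) and Lemma~\ref{L:muinfty}, then passes the supremum through $\wedge(u_\infty\wedge e)$ via Proposition~\ref{P: sup comp 2} (the $\wedge e$ being needed so that one argument lies in $F$). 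With (ii) in hand, (iv) is a two-line contradiction: if $u_{\mathcal{F}}\notin F$ then $(u_{\mathcal{F}})_\infty>0$ by (iii), yet $(u_{\mathcal{F}})_\infty\le u_{\mathcal{F}}\wedge u_\infty=0$.

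In fact your own intermediate step---that $z\wedge u_\infty=0$ for every $z\in F_+$ with $z\le u_{\mathcal{F}}$---is essentially the paper's computation for (ii); combined with the $\wedge e$ trick it yields (ii) directly, after which (iv) follows as above with no external input. So your approach becomes self-contained simply by reversing the order of (ii) and (iv).
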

	
	\begin{proof}
		$(i)$ Note that
		\begin{align*}
			u_{\mathcal{F}}+u_\infty&=\sup\{x-x\wedge u_\infty\colon x\in F_+, x\leq u\}+u_\infty\\
			&\geq\sup\{x+u_\infty-x\wedge u_\infty\colon x\in F_+, x\leq u\}\\
			&\geq\sup\{x\colon x\in F_+, x\leq u\}\\
			&=u,
		\end{align*}
		where the last equality follows from Proposition~\ref{P:sup comp}$(iii)$.
		
		Moreover, we have
		\begin{align*}
			u_{\mathcal{F}}+u_\infty&\leq u+u_\infty=u+\underset{n\ge 1}{\inf}\ n^{-1}u\leq\underset{n\ge 1}{\inf}\ (u+n^{-1}u)=\underset{n\ge 1}{\inf}\ (1+n^{-1})u=u.
		\end{align*}
		Thus $(i)$ holds.
		
		$(ii)$ If $x\in F_+$ and $x\leq u$, then by Proposition~\ref{P:sup comp}$(iv)$ and Lemma~\ref{L:muinfty}, we have
		\begin{align*}
			(x-x\wedge u_\infty)\wedge u_\infty&=x\wedge(u_\infty+x\wedge u_\infty)-x\wedge u_\infty\\
			&=x\wedge\Bigl((x+u_\infty)\wedge 2u_\infty\Bigr)-x\wedge u_\infty\\
			&=x\wedge\Bigl((x+u_\infty)\wedge u_\infty\Bigr)-x\wedge u_\infty\\
			&=x\wedge u_\infty-x\wedge u_\infty\\
			&=0.
		\end{align*}
		Then by Proposition~\ref{P: sup comp 2} and the string of equalities above, we have
		\begin{align*}
			0&\leq u_{\mathcal{F}}\wedge(u_\infty\wedge e)\\
			&=\bigl(\sup\{x-x\wedge u_\infty\colon x\in F_+,\ x\leq u\}\bigr)\wedge(u_\infty\wedge e)\\
			&=\sup\{(x-x\wedge u_\infty)\wedge(u_\infty\wedge e):\ x\in F_+,\ x\leq u\}\\
			&\leq\sup\{(x-x\wedge u_\infty)\wedge u_\infty:\ x\in F_+,\ x\leq u\}\\
			&=0.
		\end{align*}
		Thus $u_{\mathcal{F}}\wedge u_\infty\wedge e=0$. But since $e$ is a positive invertible element, we have $u_{\mathcal{F}}\wedge u_\infty=0$, as claimed.
		
		$(iii)$ Evident since $E$ is Archimedean.
		
		$(iv)$ We use proof by contradiction. To this end, suppose that $u_{\mathcal{F}}\in F^s\setminus F$. Then by part $(iii)$ of this theorem we have $(u_{\mathcal{F}})_\infty\in F^s\setminus F$, and hence $(u_{\mathcal{F}})_\infty>0$. Thus
		\begin{align*}
			u_{\mathcal{F}}\wedge u_\infty&\geq(u_{\mathcal{F}})_\infty\wedge u_\infty=\left(\underset{n\ge 1}{\inf}\ n^{-1}u_{\mathcal{F}}\right)\wedge\left(\underset{n\ge 1}{\inf}\ n^{-1}u\right)=\underset{n\ge 1}{\inf}\ n^{-1}u_{\mathcal{F}}=(u_{\mathcal{F}})_\infty > 0,
		\end{align*}
		a contradiction with part $(ii)$ of this theorem. Thus $u_{\mathcal{F}}\in F$ and hence $u_{\mathcal{F}}\in F_+$, which establishes $(iv)$.
	\end{proof}
	
	As an immediate consequence of Theorem~\ref{T:uF and u infty}, we have the following corollary.
	
	\begin{corollary}
		Let $0\leq u\in F^s$. The following are equivalent:
		\begin{itemize}
			\item[($i$)] $u\in F_+$,
			\item[($ii$)] $u_\infty=0$, and
			\item[($iii$)] $u=u_{\mathcal{F}}$.
		\end{itemize}
	\end{corollary}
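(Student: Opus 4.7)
The plan is to prove the equivalence by a short cyclic implication $(i) \Rightarrow (ii) \Rightarrow (iii) \Rightarrow (i)$, using Theorem~\ref{T:uF and u infty} as the main input. Each step should be essentially one line once the right ingredient from the theorem is invoked, so the corollary really is an immediate consequence.

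For $(i) \Rightarrow (ii)$, I would take $u \in F_+$ and show $u_\infty := \inf_{n\geq 1} n^{-1}u = 0$ by the Archimedean property. The subtlety is that the infimum is computed in $F^s$, not in $F$. To handle this, I would take any $v \in F^s$ with $0 \leq v \leq n^{-1}u$ for all $n \geq 1$; since $v \leq u \in F$, Proposition~\ref{P:sup comp}$(v)$ forces $v \in F$, and then Archimedeanness of $F$ gives $v = 0$. Hence $u_\infty = 0$ in $F^s$.

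For $(ii) \Rightarrow (iii)$, I would just apply Theorem~\ref{T:uF and u infty}$(i)$: $u = u_{\mathcal{F}} + u_\infty = u_{\mathcal{F}} + 0 = u_{\mathcal{F}}$. For $(iii) \Rightarrow (i)$, I would apply Theorem~\ref{T:uF and u infty}$(iv)$ directly, which states $u_{\mathcal{F}} \in F_+$; since $u = u_{\mathcal{F}}$, we get $u \in F_+$.

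There is no real obstacle here; the only point requiring a moment of care is the verification that $\inf_{n\geq 1} n^{-1}u = 0$ in the sup-completion rather than merely in $F$, which is handled cleanly by Proposition~\ref{P:sup comp}$(v)$. Everything else is a direct reading-off from parts $(i)$ and $(iv)$ of Theorem~\ref{T:uF and u infty}.
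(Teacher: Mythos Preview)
Your proposal is correct and matches the paper's intent: the paper gives no explicit proof, simply labeling the corollary an ``immediate consequence of Theorem~\ref{T:uF and u infty},'' and your cyclic argument spells out precisely that. One minor observation: for $(i)\Rightarrow(ii)$ you could shortcut your Archimedean computation by invoking Theorem~\ref{T:uF and u infty}$(iii)$ directly---since $u\in F_+$ forces $u_\infty\le u\in F$ and hence $u_\infty\in F$ by Proposition~\ref{P:sup comp}$(v)$, part $(iii)$ then rules out $u_\infty\in F^s\setminus F$---but your direct argument is equally valid and in fact replicates the reasoning behind part $(iii)$ itself.
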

	
	The finite and infinite parts of elements in the sup-completion, as was shown in Theorem~\ref{T:uF and u infty}, yields the corresponding threefold band decomposition of $F$. 
	
	\begin{theorem}\label{T:3parts}
		For $0\leq u\in F^s$, we have
		\[
		F= B_{u_{\mathcal{F}}}\oplus B_{u_\infty}\oplus B_u^d. 
		\]
	\end{theorem}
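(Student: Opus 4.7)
The plan is to reduce the claim to proving $B_u = B_{u_{\mathcal{F}}} \oplus B_{u_\infty}$, since combining this with the standard splitting $F = B_u \oplus B_u^d$ (available because $F$ is Dedekind complete) immediately produces the stated threefold decomposition. Because $e$ is a weak order unit of $F$ and every band is determined by the projection of $e$, it suffices to establish the sharper numerical identity $\mathbb{P}_u(e) = \mathbb{P}_{u_{\mathcal{F}}}(e) + \mathbb{P}_{u_\infty}(e)$ with the two summands disjoint.

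For disjointness, I would first observe that Lemma~\ref{L:muinfty} collapses $\mathbb{P}_{u_\infty}(e)$ to $e \wedge u_\infty$, and the same lemma combined with Theorem~\ref{T:uF and u infty}$(ii)$ and positive homogeneity of $\wedge$ in the lattice cone $F^s$ yields $n u_{\mathcal{F}} \wedge u_\infty = n u_{\mathcal{F}} \wedge n u_\infty = n(u_{\mathcal{F}} \wedge u_\infty) = 0$ for every $n \ge 1$. Applying Proposition~\ref{P: sup comp 2} to the set $\{e \wedge n u_{\mathcal{F}}\}_{n \ge 1} \subseteq F^s$ against the fixed element $e \wedge u_\infty \in F$ then shows $\mathbb{P}_{u_{\mathcal{F}}}(e) \wedge \mathbb{P}_{u_\infty}(e) = \sup_n(e \wedge n u_{\mathcal{F}} \wedge u_\infty) = 0$. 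Moreover, since $u_{\mathcal{F}}, u_\infty \le u$ in $F^s$, the inequality $\mathbb{P}_{u_{\mathcal{F}}}(e) + \mathbb{P}_{u_\infty}(e) \le \mathbb{P}_u(e)$ follows from monotonicity of the projection in its generator together with the just-proved disjointness.

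The reverse inequality is the technical heart of the argument, and I would extract it from two invocations of Proposition~\ref{P:sup comp}$(iv)$. First, taking $z := u_\infty$, $x := n u_{\mathcal{F}}$, $y := e$, and using $u_\infty + n u_{\mathcal{F}} = nu$ (Lemma~\ref{L:muinfty}), the proposition yields $nu \wedge (e + u_\infty) = u_\infty + (e \wedge n u_{\mathcal{F}})$, hence $e \wedge nu \le \mathbb{P}_{u_{\mathcal{F}}}(e) + u_\infty$; taking suprema gives $\mathbb{P}_u(e) \le \mathbb{P}_{u_{\mathcal{F}}}(e) + u_\infty$. Second, applying the proposition with $z := \mathbb{P}_{u_{\mathcal{F}}}(e)$, $x := e - \mathbb{P}_{u_{\mathcal{F}}}(e)$, $y := u_\infty$, and using that the commuting band projections $\mathbb{P}_{u_{\mathcal{F}}}$ and $\mathbb{P}_{u_\infty}$ have disjoint ranges (so $\mathbb{P}_{u_\infty}(e - \mathbb{P}_{u_{\mathcal{F}}}(e)) = \mathbb{P}_{u_\infty}(e)$), I obtain $e \wedge (\mathbb{P}_{u_{\mathcal{F}}}(e) + u_\infty) = \mathbb{P}_{u_{\mathcal{F}}}(e) + \mathbb{P}_{u_\infty}(e)$. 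Since also $\mathbb{P}_u(e) \le e$, combining the two estimates closes the bound and establishes the identity.

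The main obstacle is that in the nontrivial case $u_\infty$ genuinely lives in $F^s \setminus F$ by Theorem~\ref{T:uF and u infty}$(iii)$, so standard vector-lattice manipulations involving sums and meets are not at my disposal; everything must be routed through the distributivity of the lattice $F^s$ (Proposition~\ref{P:dist lattice}) and through Proposition~\ref{P:sup comp}$(iv)$, which supplies essentially the only usable interaction between $+$ and $\wedge$ across the $F/F^s$ boundary.
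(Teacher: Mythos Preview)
Your proof is correct and follows the same overall architecture as the paper: reduce to $B_u = B_{u_{\mathcal{F}}} \oplus B_{u_\infty}$, establish disjointness via Lemma~\ref{L:muinfty} and Theorem~\ref{T:uF and u infty}$(ii)$, and then verify the sum. The technical execution differs slightly. The paper works with a general $0 \le x \in B_u$ and obtains $\mathbb{P}_u(x) = \mathbb{P}_{u_{\mathcal{F}}}(x) + \mathbb{P}_{u_\infty}(x)$ in a single chain by writing $nu = nu_{\mathcal{F}} + u_\infty$ and invoking distributivity (Proposition~\ref{P:dist lattice}) together with $u_{\mathcal{F}} \wedge u_\infty = 0$ to split $x \wedge nu$ additively. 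You instead reduce to the single element $e$ (legitimate since $e$ is a weak order unit and bands are determined by their component of $e$) and route the reverse inequality through two applications of Proposition~\ref{P:sup comp}$(iv)$: first to bound $\mathbb{P}_u(e)$ above by $\mathbb{P}_{u_{\mathcal{F}}}(e) + u_\infty$, and then to identify $e \wedge (\mathbb{P}_{u_{\mathcal{F}}}(e) + u_\infty)$ with $\mathbb{P}_{u_{\mathcal{F}}}(e) + \mathbb{P}_{u_\infty}(e)$. Your route avoids the slightly delicate ``meet distributes over disjoint sums in $F^s$'' step that the paper bundles into a citation of several results at once, at the cost of an extra intermediate inequality; both arguments are of comparable length and rely on the same circle of lemmas.
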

	
	\begin{proof}
		Let $0\leq u\in F^s$. We first show that $B_{u_{\mathcal{F}}}\cap B_{u_\infty}=\{0\}$. To this end, suppose $x\in B_{u_{\mathcal{F}}}\cap B_{u_\infty}$, with $x\geq 0$. Then $x=\mathbb{P}_{u_{\mathcal{F}}}(x)=\mathbb{P}_{u_\infty}(x)$. Using Lemma~\ref{L:muinfty}, Theorem~\ref{T:uF and u infty}$(ii)$, and Proposition~\ref{P: sup comp 2}, we obtain
		\begin{align*}
			0\leq x&=x\wedge x=\mathbb{P}_{u_{\mathcal{F}}}(x)\wedge\mathbb{P}_{u_\infty}(x)=\underset{n\ge 1}{\sup}\{x\wedge nu_{\mathcal{F}}\}\wedge\underset{n\ge 1}{\sup}\{x\wedge nu_\infty\}\\
			&=\underset{n\ge 1}{\sup}\{x\wedge nu_{\mathcal{F}}\}\wedge(x\wedge u_\infty)=\underset{n\ge 1}{\sup}\{(x\wedge nu_{\mathcal{F}})\wedge(x\wedge u_\infty)\}\\&\leq\underset{n\ge 1}{\sup}\{n(u_{\mathcal{F}}\wedge u_\infty)\}=0.
		\end{align*}
		Hence $x=0$. It follows that $B_{u_{\mathcal{F}}}\cap B_{u_\infty}=\{0\}$.
		
		We next show that $B_u=B_{u_{\mathcal{F}}}+B_{u_\infty}$, first handling the inclusion $B_{u_{\mathcal{F}}}+B_{u_\infty}\subseteq B_u$. To this end, let $x+y\in B_{u_{\mathcal{F}}}+B_{u_\infty}$ with $x,y\geq 0$. Then
		\begin{align*}
			x+y&=\mathbb{P}_{u_{\mathcal{F}}}(x)+\mathbb{P}_{u_\infty}(y)=\underset{n\ge 1}{\sup}\{x\wedge nu_{\mathcal{F}}\}+\underset{m\ge 1}{\sup}\{y\wedge mu_\infty\}\\
			&\leq\underset{n\ge 1}{\sup}\{x\wedge nu\}+\underset{m\ge 1}{\sup}\{y\wedge mu\}\leq 2\mathbb{P}_u(x\vee y),
		\end{align*}
		and so $x+y\in B_u$.
		
		For the reverse inclusion, note that, from Theorem~\ref{T:uF and u infty}$(i)$  and \cite[Lemma 1.3]{Donner}, we have
		\[
		u=u_{\mathcal{F}}+u_\infty=u_{\mathcal{F}}\vee u_\infty+u_{\mathcal{F}}\wedge u_\infty.
		\]
		It thus follows from Theorem~\ref{T:uF and u infty}$(ii)$ that $u=u_{\mathcal{F}}\vee u_\infty$. Hence for any $0\leq x\in B_u$, we have from Lemma~\ref{L:muinfty}, Theorem~\ref{T:uF and u infty}$(ii)$, Proposition~\ref{P:sup comp}$(ii)$, and Proposition~\ref{P:dist lattice} that
		\begin{align*}
			x&=\mathbb{P}_u(x)=\underset{n\ge 1}{\sup}\{x\wedge nu\}=\underset{n\ge 1}{\sup}\{x\wedge(nu_{\mathcal{F}}+nu_\infty)\}=\underset{n\ge 1}{\sup}\{x\wedge nu_{\mathcal{F}}+x\wedge nu_\infty\}\\&=\underset{n\ge 1}{\sup}\{x\wedge nu_{\mathcal{F}}+x\wedge u_\infty\}=\underset{n\ge 1}{\sup}\{x\wedge nu_{\mathcal{F}}\}+x\wedge u_\infty=\underset{n\ge 1}{\sup}\{x\wedge nu_{\mathcal{F}}\}+\underset{m\ge 1}{\sup}\{x\wedge mu_\infty\}\\&=\mathbb{P}_{u_{\mathcal{F}}}(x)+\mathbb{P}_{u_\infty}(x).
		\end{align*}
		It hence follows that $B_u=B_{u_{\mathcal{F}}}+ B_{u_\infty}$. We therefore obtain
		\[
		F=B_u\oplus B_u^d=B_{u_{\mathcal{F}}}\oplus B_{u_\infty}\oplus B_u^d.
		\]
	\end{proof}
	
	We are almost ready to present our new and improved Cauchy-Hadamard formulas but require some lemmas first. 
	\begin{lemma}\label{L:P_Fu=u_F}
		Let $0\leq u\in F^s$. Then $\bar{\mathbb{P}}_{u_{\mathcal{F}}}(u)=u_{\mathcal{F}}$.
	\end{lemma}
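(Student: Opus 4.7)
The plan is to prove the two inequalities $\bar{\mathbb{P}}_{u_{\mathcal{F}}}(u)\ge u_{\mathcal{F}}$ and $\bar{\mathbb{P}}_{u_{\mathcal{F}}}(u)\le u_{\mathcal{F}}$ separately, using only the formula $\bar{\mathbb{P}}_{u_{\mathcal{F}}}(y)=\sup\{\mathbb{P}_{u_{\mathcal{F}}}(x)\colon x\in F,\ x\le y\}$ together with the decomposition $u=u_{\mathcal{F}}+u_\infty$ from Theorem~\ref{T:uF and u infty}.

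For the forward inequality, Theorem~\ref{T:uF and u infty}$(iv)$ gives $u_{\mathcal{F}}\in F_+$, and since $u_\infty\ge 0$, Theorem~\ref{T:uF and u infty}$(i)$ yields $u_{\mathcal{F}}\le u_{\mathcal{F}}+u_\infty=u$. Thus $u_{\mathcal{F}}$ itself appears in the supremum defining $\bar{\mathbb{P}}_{u_{\mathcal{F}}}(u)$, and clearly $\mathbb{P}_{u_{\mathcal{F}}}(u_{\mathcal{F}})=u_{\mathcal{F}}$, giving $\bar{\mathbb{P}}_{u_{\mathcal{F}}}(u)\ge u_{\mathcal{F}}$ at once.

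For the reverse inequality, I would fix an arbitrary $x\in F$ with $x\le u$ and show $\mathbb{P}_{u_{\mathcal{F}}}(x)\le u_{\mathcal{F}}$. Since $\mathbb{P}_{u_{\mathcal{F}}}$ is positive and $x\le u$ together with $0\le u$ implies $x^+\le u$, the inequality $\mathbb{P}_{u_{\mathcal{F}}}(x)\le\mathbb{P}_{u_{\mathcal{F}}}(x^+)$ reduces the task to $x\in F_+$ with $x\le u$. For such an $x$, I write the key decomposition
\[
x=(x-x\wedge u_\infty)+(x\wedge u_\infty),
\]
where $x\wedge u_\infty\in F$ by Proposition~\ref{P:sup comp}$(v)$ so that both summands lie in $F_+$. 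The first summand satisfies $x-x\wedge u_\infty\le u_{\mathcal{F}}$ directly from the definition of $u_{\mathcal{F}}$, and therefore lies in the ideal, hence the band, $B_{u_{\mathcal{F}}}$. The second summand satisfies $(x\wedge u_\infty)\wedge u_{\mathcal{F}}\le u_\infty\wedge u_{\mathcal{F}}=0$ by monotonicity of $\wedge$ and Theorem~\ref{T:uF and u infty}$(ii)$, so it belongs to $B_{u_{\mathcal{F}}}^d$. Applying $\mathbb{P}_{u_{\mathcal{F}}}$ gives $\mathbb{P}_{u_{\mathcal{F}}}(x)=x-x\wedge u_\infty\le u_{\mathcal{F}}$, and taking the supremum over all admissible $x$ yields $\bar{\mathbb{P}}_{u_{\mathcal{F}}}(u)\le u_{\mathcal{F}}$.

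The main thing to watch is keeping computations honest between $F$ and $F^s$: the meet $x\wedge u_\infty$ is formed in $F^s$ but must be recognized as an element of $F$ so that the $F$-band projection $\mathbb{P}_{u_{\mathcal{F}}}$ is defined on it, and this is exactly what Proposition~\ref{P:sup comp}$(v)$ supplies. Beyond that, the argument is just assembly of Theorem~\ref{T:uF and u infty} with the definition of $\bar{\mathbb{P}}_{u_{\mathcal{F}}}$.
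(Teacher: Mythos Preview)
Your proof is correct. The forward inequality is essentially the same as the paper's (the paper phrases it as $\bar{\mathbb{P}}_{u_{\mathcal{F}}}(u)\ge\bar{\mathbb{P}}_{u_{\mathcal{F}}}(u_{\mathcal{F}})=u_{\mathcal{F}}$, invoking Lemma~\ref{L:B_u^s}, but your direct argument via $u_{\mathcal{F}}\le u$ amounts to the same thing).

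For the reverse inequality the two arguments genuinely differ. The paper expands $\mathbb{P}_{u_{\mathcal{F}}}(x)=\sup_{n\ge 1}(x\wedge nu_{\mathcal{F}})$, passes to $\sup_{n\ge 1}(u\wedge nu_{\mathcal{F}})$, and then computes this supremum in $F^s$ using Proposition~\ref{P:sup comp}$(iv)$ and the disjointness $u_{\mathcal{F}}\wedge u_\infty=0$ to obtain $u_{\mathcal{F}}$. Your approach instead decomposes each $x\in F_+$ with $x\le u$ as $(x-x\wedge u_\infty)+(x\wedge u_\infty)$ and recognizes the two pieces as lying in $B_{u_{\mathcal{F}}}$ and $B_{u_{\mathcal{F}}}^d$ respectively, so that $\mathbb{P}_{u_{\mathcal{F}}}(x)=x-x\wedge u_\infty\le u_{\mathcal{F}}$ drops out of the very definition of $u_{\mathcal{F}}$. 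Your route actually identifies $\mathbb{P}_{u_{\mathcal{F}}}(x)$ exactly rather than merely bounding it, and keeps all lattice computations inside $F$ once Proposition~\ref{P:sup comp}$(v)$ has placed $x\wedge u_\infty$ there; the paper's route is a single chain of (in)equalities but requires working carefully with infima and the distributive-type identity of Proposition~\ref{P:sup comp}$(iv)$ inside the cone $F^s$.
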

	
	\begin{proof}
		First note that $\bar{\mathbb{P}}_{u_{\mathcal{F}}}(u)\geq\bar{\mathbb{P}}_{u_{\mathcal{F}}}(u_{\mathcal{F}})=u_{\mathcal{F}}$ 
		by Lemma~\ref{L:B_u^s}. Moreover, using Proposition~\ref{P:sup comp}$(iv)$ and Theorem~\ref{T:uF and u infty}$(ii)$, we have
		\begin{align*}
			\bar{\mathbb{P}}_{u_{\mathcal{F}}}(u)&=\sup\left\{\mathbb{P}_{u_{\mathcal{F}}}(x):\ x\in F_+,\ x\leq u\right\}=\sup\left\{\underset{n\ge 1}{\sup}\{x\wedge nu_{\mathcal{F}}\}:\ x\in F_+,\ x\leq u\right\}\\
			&\leq\underset{n\ge 1}{\sup}\{u\wedge nu_{\mathcal{F}}\}=\underset{n\ge 1}{\sup}\{(u_{\mathcal{F}}+u_\infty)\wedge nu_{\mathcal{F}}\}=\underset{n\ge 1}{\sup}\{u_{\mathcal{F}}+u_\infty\wedge(n-1)u_{\mathcal{F}}\}\\
			&=u_{\mathcal{F}},
		\end{align*}
		which proves the lemma.
	\end{proof}
	
	As in \cite{Donner}, we set $y^+:=y\vee 0$ for $y\in F^s$.
	
	\begin{lemma}\label{L:largestelmt}
		If $0\leq u\in F^s$, then $u_\infty$ is the largest element of $B_{u_\infty}^s$.
	\end{lemma}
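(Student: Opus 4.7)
The plan is to establish two inequalities: $u_\infty \le M$ and $M \le u_\infty$, where $M$ denotes the largest element of $B_{u_\infty}^s$. Such an $M$ exists because $B_{u_\infty}$ is itself a universally complete vector lattice and Proposition~\ref{P:sup comp}$(i)$\&$(vi)$ applied to its sup-completion guarantee a maximum.

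First, I would show $u_\infty \in B_{u_\infty}^s$ by applying Lemma~\ref{L:B_u^s} with $u$ replaced by the positive element $u_\infty \in F^s$; this immediately yields $\bar{\mathbb{P}}_{u_\infty}(u_\infty) = u_\infty$ and $u_\infty \in B_{u_\infty}^s$, hence $u_\infty \le M$.

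For the reverse inequality $M \le u_\infty$, the key observation I would establish is that every positive element of the band $B_{u_\infty}$ is already dominated by $u_\infty$. Indeed, for $x \in (B_{u_\infty})_+$, using the definition of $\mathbb{P}_{u_\infty}$ together with Lemma~\ref{L:muinfty} (which gives $nu_\infty = u_\infty$ for every $n \in \mathbb{N}$), one computes
\[
x \;=\; \mathbb{P}_{u_\infty}(x) \;=\; \underset{n \ge 1}{\sup}\{x \wedge n u_\infty\} \;=\; x \wedge u_\infty \;\le\; u_\infty.
\]
Then, applying Proposition~\ref{P:sup comp}$(iii)$ inside the sup-completion of the universally complete vector lattice $B_{u_\infty}$, I would write $M = \sup\{x \in B_{u_\infty} \colon x \le M\}$. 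Since $M \ge 0$, I may replace each such $x$ by $x^+ \in (B_{u_\infty})_+$ (which still satisfies $x \le x^+ \le M$) to reduce the supremum to one over positive elements of $B_{u_\infty}$, each of which is bounded by $u_\infty$ by the above computation. This yields $M \le u_\infty$, completing the argument.

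The one subtle step to verify carefully is that the ordering on $B_{u_\infty}^s$ is compatible with the ordering inherited from $F^s$, so that the bound $x \le u_\infty$ obtained in $F^s$ transfers to a legitimate upper bound inside $B_{u_\infty}^s$ when forming suprema. This should follow from the essential uniqueness of the sup-completion in Proposition~\ref{P:sup comp} and the canonical embedding $B_{u_\infty}^s \hookrightarrow F^s$, and it is the only nontrivial point in an otherwise short argument.
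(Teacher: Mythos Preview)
Your proposal is correct and uses essentially the same approach as the paper: the key step in both is the computation $\mathbb{P}_{u_\infty}(x)=\sup_{n\ge 1}\{x\wedge nu_\infty\}=x\wedge u_\infty\le u_\infty$ for $x\in F_+$, which relies on Lemma~\ref{L:muinfty}. The only cosmetic difference is that the paper bounds an arbitrary $y\in B_{u_\infty}^s$ directly via the extended projection $\bar{\mathbb{P}}_{u_\infty}(y^+)$ rather than introducing the maximum $M$ and invoking Proposition~\ref{P:sup comp}$(iii)$ as you do; your remark about compatibility of the orderings on $B_{u_\infty}^s$ and $F^s$ is a fair caveat that the paper leaves implicit.
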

	
	\begin{proof}
		Let $y\in B_{u_\infty}^s$. Then using Lemma~\ref{L:muinfty} in the fourth equality below, we obtain
		\begin{align*}
			y&\leq y^+=\bar{\mathbb{P}}_{u_\infty}(y^+)
			=\sup \{\mathbb{P}_{u_\infty}(x):\ x\in F_+,\ x\leq y^+\}\\
			&=\sup\left\{\underset{n\ge 1}{\sup}\{x\wedge nu_\infty\}:\ x\in F_+,\ x\leq y^+\right\}=\sup\left\{x\wedge u_\infty:\ x\in F_+,\ x\leq y^+\right\}\leq u_\infty.
		\end{align*}
	\end{proof}
	
	We next make use of the multiplication on $F^s$ introduced by Azouzi and Nasri in \cite[Section 3.2]{Azouzi}. This extended multiplication is defined for $0\leq x,y\in F^s$ as
	\[
	xy:=\sup\{vw:\ v,w\in F_+,\ v\leq x,\ w\leq y\}.
	\]
	
	The following useful lemma from \cite{Azouzi} will also be applied in our proofs.
	
	\begin{lemma}\label{L:23}\textbf{\textnormal{(}\cite[Lemma~23]{Azouzi}\textnormal{)}} Let $(x_\alpha)_\alpha, (y_\beta)_\beta$ be nets in $F^s_+$ such that $x_\alpha\uparrow x$ and $y_\beta\uparrow y$ for $0\leq x,y\in F^s$. Then $x_\alpha y_\alpha\uparrow xy$.
	\end{lemma}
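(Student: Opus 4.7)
The plan is to first reinterpret the diagonal notation $x_\alpha y_\alpha$ as the product net $(x_\alpha y_\beta)_{(\alpha,\beta)}$ on the product directed set, since the two indexing sets are not assumed to coincide. The task then splits into the two standard halves of a supremum identity.

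Upward monotonicity of $(x_\alpha y_\beta)$ and the bound $x_\alpha y_\beta \le xy$ follow immediately from the definition $xy := \sup\{vw : v, w \in F_+,\ v \le x,\ w \le y\}$, since any pair $(v, w) \in F_+ \times F_+$ witnessing the extended product $x_\alpha y_\beta$ also witnesses $xy$. So the substance lies in establishing the reverse inequality $xy \le \sup_{\alpha, \beta} x_\alpha y_\beta$.

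To that end, I would fix an arbitrary test pair $v, w \in F_+$ with $v \le x$ and $w \le y$, and show $vw \le \sup_{\alpha, \beta} x_\alpha y_\beta$. The key is truncation: set $v_\alpha := v \wedge x_\alpha$ and $w_\beta := w \wedge y_\beta$. Both lie in $F_+$ by Proposition~\ref{P:sup comp}$(v)$ since they are dominated by $v \in F$. Applying Proposition~\ref{P: sup comp 2} with the fixed element $v \in F$ together with $v \le x = \sup_\alpha x_\alpha$ yields
\[
\sup_\alpha v_\alpha = v \wedge \sup_\alpha x_\alpha = v \wedge x = v,
\]
so $v_\alpha \uparrow v$ in $F_+$, and analogously $w_\beta \uparrow w$ in $F_+$. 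I would then invoke separate order continuity of multiplication in the Archimedean $f$-algebra $F$ (applied twice) to conclude $v_\alpha w_\beta \uparrow vw$ in $F_+$. Since $v_\alpha w_\beta \le x_\alpha y_\beta$ by the same monotonicity as before (using that the extended multiplication coincides with the $\Phi$-algebra multiplication on $F_+ \times F_+$), this delivers $vw \le \sup_{\alpha, \beta} x_\alpha y_\beta$, and taking the supremum over all admissible $v, w$ finishes the argument.

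The delicate step is the interface between the $\Phi$-algebra multiplication on $F$ and the extended multiplication on $F^s$: one needs the truncated products $v_\alpha w_\beta$ to live genuinely in $F_+$ so that separate order continuity of multiplication (a property of $F$, not of $F^s$) is applicable, and one needs the inequality $v_\alpha w_\beta \le x_\alpha y_\beta$ to remain valid after re-embedding into $F^s$. Both hinge on Proposition~\ref{P:sup comp}$(v)$ keeping the truncations inside $F$, and on the definition of the extended multiplication as the supremum of $F_+$-products.
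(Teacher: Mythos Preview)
The paper does not prove this lemma; it is quoted verbatim from \cite{Azouzi} and used as a black box. Your proposal therefore cannot be compared against a proof in the paper, but it stands on its own as a correct and self-contained argument.

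Your reinterpretation of the diagonal notation as the product net is the right move, and the two halves of the argument are sound. The only places worth a second look are: (a) your claim that $v_\alpha := v\wedge x_\alpha \in F_+$ relies on Proposition~\ref{P:sup comp}$(v)$ and the bound $v_\alpha \le v\in F$, which is fine even though $x_\alpha$ itself may lie in $F^s\setminus F$; and (b) separate order continuity of multiplication in $F$, which holds here because $F$ is a Dedekind complete (indeed universally complete) $f$-algebra, so that $v_\alpha\uparrow v$ in $F_+$ implies $v_\alpha w_\beta\uparrow v w_\beta$ for fixed $w_\beta\in F_+$, and a second application in the other variable gives $v_\alpha w_\beta\uparrow vw$ on the product directed set. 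With those two points checked, the chain $v_\alpha w_\beta \le x_\alpha y_\beta$ (from the definition of the extended product) and the passage to $\sup$ over admissible $(v,w)$ complete the argument exactly as you describe.
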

	
	As an immediate consequence of Lemma~\ref{L:23}, we have the following.
	
	\begin{lemma}\label{L:23 leq}
		Let $(y_\alpha)_\alpha$ be a net in $F^s_+$ such that $y_\alpha\uparrow y$ for $0\leq y\in F^s$. If $0\leq x,u\in F^s$ and $xy_\alpha\leq u$ holds for every $\alpha$, then $xy\leq u$.
	\end{lemma}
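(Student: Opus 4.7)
The plan is to derive Lemma~\ref{L:23 leq} as a direct application of Lemma~\ref{L:23}. The idea is to take the constant net $x_\alpha := x$ (indexed by the same directed set as $(y_\alpha)_\alpha$), which trivially satisfies $x_\alpha \uparrow x$ in $F^s_+$ since every element of a poset is the supremum of the constant net at itself. Combined with the hypothesis $y_\alpha \uparrow y$, Lemma~\ref{L:23} then yields $x y_\alpha = x_\alpha y_\alpha \uparrow xy$ in $F^s_+$.

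Once the net $(xy_\alpha)_\alpha$ is known to increase to $xy$, the conclusion is immediate: by definition of supremum, $xy = \sup_\alpha (xy_\alpha)$, and since each $xy_\alpha \leq u$, the element $u$ is an upper bound for the net, forcing $xy \leq u$. This last step uses only the order structure on $F^s$ (which is Dedekind complete by Proposition~\ref{P:sup comp}$(i)$, ensuring suprema behave as expected) and not any special property of the sup-completion beyond what was already exploited in the proof of Lemma~\ref{L:23}.

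There is no serious obstacle here: the entire content of the lemma is pushed into Lemma~\ref{L:23}, whose statement already handles the nontrivial fact that the extended multiplication on $F^s$ is jointly upward order continuous in both arguments. What the present lemma adds is only the cosmetic observation that one of the arguments may be held fixed, which is the degenerate case of Lemma~\ref{L:23} with a constant net. Accordingly, the write-up should be no more than two or three lines, invoking Lemma~\ref{L:23} with the constant net on the first factor and then passing to the supremum against the upper bound $u$.
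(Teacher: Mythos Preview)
Your proposal is correct and matches the paper's approach exactly: the paper simply states that Lemma~\ref{L:23 leq} is an immediate consequence of Lemma~\ref{L:23}, which is precisely your argument with the constant net $x_\alpha := x$. There is nothing to add.
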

	
	In order to formulate a Cauchy-Hadamard formula which considers unbounded spectra of convergence, we need a notion of a generalized inverse for positive elements in the sup-completion. We would like to remind the reader that for $z \in E$ the notation $z^*$ refers to the multiplicative inverse of $z$ in the band $B_z$.
	
	\begin{definition}
		Let $x\in F^s_+$. The \emph{generalized inverse} of $x$ in $F^s_+$, denoted by $x^{-1}$, is defined by $x^{-1} :=(x_{\mathcal{F}})^\ast + \infty_x^d$,
		where $\infty_x^d$ denotes the largest element of $(B_x^d)^s$.
	\end{definition}
	
	\begin{theorem}[\textbf{Cauchy-Hadarmard}]\label{T:CHF}
		Let $\sum_{n=0}^\infty a_n(z-c)^n$ be a power series on $E$. Also let $L:=\limsup_{n\to\infty}|a_n|^{1/n}$, which exists in $F^s$. The following identities hold.
		\begin{itemize}
			\item[($i$)] $B_L^d=B_{\rho_\infty}$.
			\item[($ii$)] $B_{L_\infty}=B_{\rho}^d$.
			\item[($iii$)] $(L_{\mathcal{F}})^\ast=\rho_{\mathcal{F}}$.
		\end{itemize}
		In particular, we have that $L^{-1}=\rho$ for the generalised inverse of $L$ in $F^s$.
	\end{theorem}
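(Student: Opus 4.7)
The plan is to exploit the threefold band decomposition $F = B_{L_{\mathcal{F}}} \oplus B_{L_\infty} \oplus B_L^d$ from Theorem~\ref{T:3parts} and analyze $\rho$ on each piece, reducing to the bounded Cauchy-Hadamard result of \cite{RoeSch} wherever possible. The key technical fact is that band projections are order continuous complex vector lattice homomorphisms (properties (iii)--(v) of the list preceding this theorem), so they commute with $\limsup$, with absolute order convergence of series, and with the suprema used to define $\Omega$ and $\rho$.

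For part $(iii)$, I would project the coefficients onto the band $B_{L_{\mathcal{F}}}$. Since $L_{\mathcal{F}} \in F_+$ by Theorem~\ref{T:uF and u infty}$(iv)$, the projected $\limsup$ is order bounded, and $B_{L_{\mathcal{F}}}$ is itself a universally complete complex vector lattice; hence the original Cauchy-Hadamard formula of \cite{RoeSch} applies on $B_{L_{\mathcal{F}}}$ and yields $(L_{\mathcal{F}})^\ast = \rho_{\mathcal{F}}$.

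For parts $(i)$ and $(ii)$, I would argue via direct root-test style estimates on the remaining bands. On $B_L^d$ the projection of $L$ vanishes, so for any positive invertible $r$ in $B_L^d$ one has $\limsup |a_n|^{1/n} r = 0$, and the geometric majorant $\sum |a_n| r^n$ converges uniformly in order on $\bar{\Delta}(c, r)$; since $r$ can be taken arbitrarily large in $B_L^d$, this forces $\rho$ to be unbounded on $B_L^d$, giving $B_L^d \subseteq B_{\rho_\infty}$, and the reverse inclusion follows because unboundedness of $\rho$ on any band prevents $L$ from having a positive projection there. Part $(ii)$ is the mirror argument: on $B_{L_\infty}$ the limsup is not dominated by any element of $F$, so the partial sums of $|a_n| r^n$ cannot converge uniformly on $\bar{\Delta}(c, r)$ for any positive invertible $r$ in that band, forcing $\rho = 0$ on $B_{L_\infty}$, i.e.\ $B_{L_\infty} \subseteq B_\rho^d$, with the reverse inclusion obtained by localizing the root-test bound to $B_\rho^d$.

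The concluding identity $L^{-1} = \rho$ then assembles from the three parts together with Lemma~\ref{L:largestelmt}: by $(i)$, $(B_L^d)^s = B_{\rho_\infty}^s$, whose largest element is $\rho_\infty$ by Lemma~\ref{L:largestelmt}, so $\infty_L^d = \rho_\infty$ and hence $L^{-1} = (L_{\mathcal{F}})^\ast + \infty_L^d = \rho_{\mathcal{F}} + \rho_\infty = \rho$ by Theorem~\ref{T:uF and u infty}$(i)$. The main obstacle I anticipate is keeping track of the root-test inequalities in the sup-completion $F^s$ when $L$ is not in $F$; one must verify that band projections (suitably extended to $F^s$ as in the remark preceding Lemma~\ref{L:B_u^s}) commute with $\limsup_{n\to\infty}|a_n|^{1/n}$ and with the formation of the spectrum of convergence, and must carefully handle uniform order convergence of a power series when both the coefficients and the radius are simultaneously projected onto a band.
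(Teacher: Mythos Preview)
Your overall strategy matches the paper's: decompose via Theorem~\ref{T:3parts}, apply the bounded Cauchy--Hadamard from \cite{RoeSch} on $B_{L_{\mathcal{F}}}$, and handle the other two bands by direct root-test estimates; the final assembly of $L^{-1}=\rho$ is identical to the paper's. Two points deserve attention.

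First, the logical order. You propose to prove $(iii)$ before $(i)$ and $(ii)$, but the identification $(L_{\mathcal{F}})^\ast=\rho_{\mathcal{F}}$ is not an immediate consequence of the old Cauchy--Hadamard applied inside $B_{L_{\mathcal{F}}}$. What that result gives is $(L_{\mathcal{F}})^\ast=\bar{\mathbb{P}}_{L_{\mathcal{F}}}(\rho)$, the radius of the projected series; to conclude that this equals $\rho_{\mathcal{F}}$ one needs $B_{L_{\mathcal{F}}}=B_{\rho_{\mathcal{F}}}$, which is exactly what follows from comparing the two threefold decompositions once $(i)$ and $(ii)$ are in hand (then Lemma~\ref{L:P_Fu=u_F} gives $\bar{\mathbb{P}}_{\rho_{\mathcal{F}}}(\rho)=\rho_{\mathcal{F}}$). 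The ``commutation obstacle'' you flag is therefore not an independent technicality but is resolved precisely by proving $(i)$ and $(ii)$ first; the paper does them in that order for this reason.

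Second, the reverse inclusion $B_\rho^d\subseteq B_{L_\infty}$ in $(ii)$ is the most delicate step, and ``localizing the root-test bound to $B_\rho^d$'' does not quite capture it. Knowing that $x\wedge r=0$ for every $r\in\Omega$ gives no direct lower bound on $L$ restricted to $B_x$. The paper's argument is indirect: from $(i)$ one has $B_\rho^d\perp B_L^d$, so $B_\rho^d\subseteq B_{L_{\mathcal{F}}}\oplus B_{L_\infty}$, and one then rules out any nontrivial overlap with $B_{L_{\mathcal{F}}}$ by producing, in any such overlap, the element $\epsilon\,\mathbb{P}(L_{\mathcal{F}}^\ast)$ which satisfies $\limsup|a_n|^{1/n}\cdot\epsilon\,\mathbb{P}(L_{\mathcal{F}}^\ast)=\epsilon\,\mathbb{P}(e)\ll e$ and hence lies in $\Omega$, contradicting membership in $B_\rho^d$. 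Your sketch for $(i)$ and the forward inclusion of $(ii)$ is fine and matches the paper's use of $L\rho\le e$ together with Lemma~\ref{L:muinfty}.
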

	
	\begin{proof}
		
		Towards proving $(i)$, let $r_\alpha\uparrow\rho$, where $r_\alpha\in\Omega$ for every $\alpha$. Fixing $\alpha$, we have that $r_\alpha\in\Omega$ implies $\sum_{n=0}^\infty a_nr_\alpha^n$ converges in order. Thus by \cite[Theorem~3.6(ii)]{RoeSch}, we have
		\[
		\underset{n\to\infty}{\limsup}(|a_n|^{1/n}r_\alpha)\leq e.
		\]
		
		Let $(x_n)_{n\geq 0}$ be a sequence in $E_+$ and set $x:=\underset{n\to\infty}{\limsup}\ x_n$. Note that $x$ is well-defined in $F^s$, and assume that $y\in E_+$. It follows from Lemma~\ref{L:23} and \cite[Lemma~24$(iii)$]{AzouNasri}, which states that
		\[
		x(y\vee z)=(xy)\vee(xz)\qquad (0\leq x,y,z\in F^s),
		\]
		that
		\begin{equation}\label{E: limsup mult}
			xy=\underset{n\to\infty}{\limsup}(x_ny).
		\end{equation}
		We thus obtain
		\begin{align*}
			Lr_\alpha=\underset{n\to\infty}{\limsup}\left(|a_n|^{1/n}r_\alpha\right)
			&\leq e.
		\end{align*}
		It follows from Lemma~\ref{L:23 leq} that $L\rho\leq e$, and thus $L\rho_\infty\leq e$.
		Then by Lemma~\ref{L:muinfty}, we have $nL\rho_\infty\leq e$ for each $n\in\mathbb{N}$, hence $L\rho_\infty=0$ since $F$ is Archimedean. It follows that $L\wedge\rho_\infty=0$. Thus if $x\in B_{\rho_\infty}$, then by Lemma~\ref{L:largestelmt} we have
		\[
		|x|\wedge L\leq\rho_\infty\wedge L=0,
		\]
		so $x\in B_L^d$.
		
		Next let $0\leq r\in B_L^d$. Then $\underset{n\to\infty}{\limsup}(|a_n|^{1/n}mr)=mLr=0\ll e$ for all $m\in\mathbb{N}$, so by \cite[Theorem~3.6(i)]{RoeSch} we have $mr\in\Omega$ for each $m\in\mathbb{N}$. Then $mr\leq\rho$ and hence $r\leq\frac{1}{m}\rho$ for every $m\in\mathbb{N}$. Thus we get $r\leq\rho_\infty$. Hence $r\in B_{\rho_\infty}$ and $(i)$ holds.
		
		We proceed with the proof of $(ii)$. For this task, let $r\in\Omega$. Then as argued in $(i)$ we have $Lr\leq e$. It follows that $L_\infty r\leq e$,
		and thus $L_\infty\rho\leq e$ holds by Lemma~\ref{L:23 leq}. By Lemma~\ref{L:muinfty}, we have that
		\[
		nL_\infty\rho\leq e
		\]
		holds for all $n\in\mathbb{N}$. This implies that $L_\infty\rho=0$, which in turn yields $L_\infty\wedge\rho=0$. Thus if $x\in B_{L_\infty}$, then by Lemma~\ref{L:largestelmt} we have
		\[
		|x|\wedge\rho\leq L_\infty\wedge\rho=0. 
		\]
		Hence we get $x\in B^d_{\rho}$.
		
		Next, note that by Theorem~\ref{T:3parts} and $(iii)$ we have $B_{\rho}^d\perp B_L^d$. It follows from Theorem~\ref{T:3parts} that $B_{\rho}^d\subseteq B_{L_{\mathcal{F}}}\oplus B_{L_\infty}$. We will show that $B_{\rho}^d\subseteq B_{L_\infty}$ by illustrating that $B_{\rho}^d\cap B_{L_{\mathcal{F}}}=\{0\}$. To this end, suppose that $B:=B_{\rho}^d\cap B_{L_{\mathcal{F}}}$ is a nontrivial band, and let $\mathbb{P}$ be its associated order projection. Fix $0<\epsilon<1$. Since $\epsilon L_{\mathcal{F}}^\ast$ is a positive invertible element in $B_{L_{\mathcal{F}}}$, we have that $x:=\mathbb{P}(\epsilon L_{\mathcal{F}}^\ast)$ is a positive invertible element in $B$. From $x\in B_{L_{\mathcal{F}}}$, we have $x\wedge L_\infty=0$ by Theorem~\ref{T:uF and u infty}$(ii)$. Thus from \eqref{E: limsup mult} we have
		\begin{align*}
			\underset{n\to\infty}{\limsup}(|a_n|^{1/n}x)=Lx=L_{\mathcal{F}}x=L_{\mathcal{F}}\mathbb{P}(\epsilon L_{\mathcal{F}}^\ast)=\epsilon\mathbb{P}(e)\ll e.
		\end{align*}
		Hence by \cite[Theorem~3.6(i)]{RoeSch} we get that $x\in\Omega$, and so $x\in B_{\rho}$. We thus have that $x=0$. This is a contradiction since we above deduced that $x$ was a positive invertible element in the nontrivial band $B$. Hence $B=\{0\}$, and so $B_{\rho}^d\subseteq B_{L_\infty}$. This proves $(ii)$.
		
		We conclude this proof by verifying $(iii)$. To this end, note that by Lemma~\ref{L:P_Fu=u_F} and the left-order continuity of $\bar{\mathbb{P}}_{L_{\mathcal{F}}}$ we have
		\[
		L_{\mathcal{F}}=\bar{\mathbb{P}}_{L_{\mathcal{F}}}(L)=\limsup_{n\to\infty}\,\mathbb{P}_{L_{\mathcal{F}}}(|a_n|)^{1/n}.
		\]
		Setting $b_n:=\mathbb{P}_{L_{\mathcal{F}}}(a_n)$ for every $n\in\mathbb{N}$, we have that $(b_n)_{n\ge 1}$ is order bounded and
		\[
		L_{\mathcal{F}}=\limsup_{n\to\infty}|b_n|^{1/n}.
		\]
		Then by \cite[Theorem~3.11(i)]{RoeSch}, we obtain
		\[
		(L_{\mathcal{F}})^*= \sup_{r\in\Omega}\, \mathbb{P}_{L_{\mathcal{F}}}(r)=\bar{\mathbb{P}}_{L_{\mathcal{F}}}(\rho).
		\]
		Using Theorem~\ref{T:3parts}, we have the following two band decompositions for $F$
		\[
		F=B_{L_{\mathcal{F}}}\oplus B_{L_\infty}\oplus B_L^d=B_{\rho_{\mathcal{F}}}\oplus B_{\rho_\infty}\oplus B^d_{\rho},
		\]
		and by $(i)$ and $(ii)$ we have $B_{L_{\mathcal{F}}}=B_{\rho_{\mathcal{F}}}$. Thus by Lemma~\ref{L:P_Fu=u_F} we obtain
		\[
		(L_{\mathcal{F}})^*=\underset{r\in\Omega}{\sup}\, \mathbb{P}_{L_{\mathcal{F}}}(r)=\bar{\mathbb{P}}_{L_{\mathcal{F}}}(\rho)=\bar{\mathbb{P}}_{\rho_{\mathcal{F}}}(\rho)=\rho_{\mathcal{F}},
		\]
		proving $(iii)$.
		
		The generalised inverse $L^{-1}$ of $L$ in $F^s$ is given by $(L_{\mathcal{F}})^* + \infty_L^d$, where $\infty_L^d$ is the largest element in $(B_L^d)^s$. By part $(iii)$ we have that $(L_{\mathcal{F}})^* = \rho_{\mathcal{F}}$, and by part $(i)$,  we have that $\infty_L^d$ is the largest element in $B_{\rho_\infty}^s$, which is $\rho_\infty$ by Lemma~\ref{L:largestelmt}. We can now conclude that $L^{-1}= \rho_{\mathcal{F}} +\rho_\infty = \rho$.
	\end{proof}
	
	\section{Differentiation of power series}\label{S:power series}
	
	We apply our theory of order differentiable functions to power series in this section.
	
	\begin{notation}
		Throughout the rest of the paper, $E$ denotes a universally complete complex vector lattice.
	\end{notation}
	
	\begin{theorem}\label{T:derivative of power series}
		Let $\sum_{n=0}^\infty a_n(z-c)^n$ be a power series on $E$ with spectrum of convergence $\Omega_1$. Then for the power series $\sum_{n=1}^\infty n a_n(z-c)^{n-1}$ with spectrum of convergence $\Omega_2$ the corresponding radii of convergence $\rho_1$ and $\rho_2$ are equal. In particular, $\Omega_1$ is order bounded in $F$ if and only if $\Omega_2$ is order bounded in $F$.
	\end{theorem}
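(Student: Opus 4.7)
The plan is to prove $\rho_1 = \rho_2$ by a direct comparison of the spectra: for every $r \in \Omega_1$ and every real scalar $\theta \in (0,1)$, I will show $\theta r \in \Omega_2$, and symmetrically. Coupled with the Archimedean identity $\sup_{\theta \in (0,1) \cap \mathbb{Q}} \theta r = r$ in $F_+$, this forces $\rho_1 \le \rho_2$ and $\rho_2 \le \rho_1$ after taking suprema in $F^s$. The ``in particular'' assertion will then be extracted from the characterization of finite elements of $F^s_+$.

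For the forward inclusion, fix $r \in \Omega_1$ and $\theta \in (0,1)$. Uniform order convergence of $\sum_{n} a_n(z-c)^n$ on $\bar{\Delta}(c,r)$, evaluated at $z=c+r$ and applied to two consecutive tails, extracts $u \in F_+$ with $|a_n| r^n \le u$ for every $n \in \mathbb{N}_0$. Now $r$ is a weak order unit in the band $B_r \subseteq F$ and thus invertible there with inverse $r^{\ast}$; moreover, for $n \ge 2$ the power $r^{n-1}$ lies in $B_r$, so the $\Phi$-algebra disjointness rule gives $|a_n| r^{n-1} = \mathbb{P}_r(|a_n|) r^{n-1}$, and multiplying $|a_n| r^n \le u$ by $r^{\ast}$ produces $|a_n| r^{n-1} \le r^{\ast} u$. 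Consequently, for $z \in \bar{\Delta}(c,\theta r)$ and $n \ge 2$,
\[
|n a_n (z-c)^{n-1}| \le n \theta^{n-1}\, r^{\ast} u,
\]
which is dominated by the convergent scalar series $\sum_{n \ge 2} n \theta^{n-1}$; the $n=1$ term contributes the fixed element $a_1$. Hence the derived series converges uniformly in order on $\bar{\Delta}(c,\theta r)$, so $\theta r \in \Omega_2$.

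The reverse inclusion is analogous: given $r \in \Omega_2$, uniform convergence of the derived series on $\bar{\Delta}(c,r)$ yields $v \in F_+$ with $n|a_n| r^{n-1} \le v$ for every $n \ge 1$, hence $|a_n| r^n \le n^{-1} r v$ after multiplying by $r$. For $z \in \bar{\Delta}(c,\theta r)$ we obtain the uniform majorant $|a_n(z-c)^n| \le n^{-1} \theta^n r v$, and since $\sum_{n \ge 1} \theta^n/n = -\ln(1-\theta)$ is finite, $\sum_n a_n(z-c)^n$ converges uniformly in order on $\bar{\Delta}(c,\theta r)$, so $\theta r \in \Omega_1$.

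The ``in particular'' assertion follows from $\rho_1 = \rho_2$ together with the corollary after Theorem~\ref{T:uF and u infty}, which states that $u \in F^s_+$ lies in $F_+$ precisely when $u_{\infty}=0$: since $F$ is Dedekind complete, $\Omega_i$ is order bounded in $F$ if and only if $\rho_i \in F$, and this property transfers between the two radii. The principal technical obstacle is the possible non-invertibility of $r$ in $E$, which I handle by localising to $B_r$ as above; this depends on the $\Phi$-algebra fact that disjoint elements annihilate each other under multiplication, which confines the relevant estimates to the band where $r$ is invertible.
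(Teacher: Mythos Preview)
Your argument is correct. Both you and the paper follow the same architecture---show that $\theta r\in\Omega_2$ whenever $r\in\Omega_1$ and $0<\theta<1$ (and conversely), then let $\theta\uparrow 1$ and take suprema in $F^s$---but the mechanism for the inclusion step differs. The paper invokes the root-test criterion \cite[Theorem~3.6]{RoeSch}: from $r\in\Omega_1$ it obtains $\limsup_n |a_n|^{1/n}\epsilon r\ll e$, builds an auxiliary convergent series $\sum_n n\bigl((\epsilon^2 r)(\epsilon r)^\ast\bigr)^n$ to produce a bound $R$, and then compares $n|a_n|(\epsilon^2 r)^{n-1}$ termwise to $|a_n|(\epsilon r)^n$. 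You instead run a direct Weierstrass $M$-test: from boundedness of the terms $|a_n|r^n\le u$ you pass to $|a_n|r^{n-1}\le r^\ast u$ in the band $B_r$ and majorise by the scalar series $\sum_n n\theta^{n-1}$ (and $\sum_n \theta^n/n$ for the reverse). Your route is shorter and avoids the $\limsup$ machinery entirely, at the cost of making the band localisation $r^\ast$ do slightly more work; the paper's route ties the result more tightly to the Cauchy--Hadamard apparatus it is building. Both need the identity $r r^\ast=\mathbb{P}_r(e)$ and the fact that products containing a factor of $r$ land in $B_r$, which you handle correctly. Your treatment of the ``in particular'' clause via $\rho_i\in F\iff(\rho_i)_\infty=0$ is also fine and matches what the paper's framework provides.
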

	
	\begin{proof}
		Let $0<\epsilon<1$, and let $r\in \Omega_1$. Then the series $\sum_{n=0}^\infty a_n(\epsilon r)^n$ converges absolutely in order by \cite[Theorem~3.6(i)]{RoeSch}, since
		\[
		\limsup_{n\to\infty}|a_n|^{\frac{1}{n}}(\epsilon r)\le \epsilon e\ll e.
		\]
		Furthermore, we have $(\epsilon^2 r) (\epsilon r)^*\ll e$ and
		\[
		\limsup_{n\to\infty}n^{\frac{1}{n}}(\epsilon^2 r)(\epsilon r)^*\ll e,
		\]
		so it follows that the series
		\[
		\sum_{n=0}^\infty n((\epsilon^2 r)(\epsilon r)^*)^n
		\]
		also converges absolutely in order by \cite[Theorem~3.6(i)]{RoeSch}. Hence there is an $R\in E_+$ such that 
		\[
		n((\epsilon^2 r)(\epsilon r)^*)^n\le R
		\]
		for all $n\ge 0$. Thus 
		\begin{align*}
			n|a_n|(\epsilon^2 r)^{n-1} &= n|a_n|(\epsilon^2 r)^n(\epsilon^2 r)^* = \epsilon^{-1}n|a_n|(\epsilon^2 r)^n(\epsilon r)^* = \epsilon^{-1}n|a_n|(\epsilon^2 r)^n((\epsilon r)^*)^n(\epsilon r)^{n-1}\\&= \epsilon^{-1} n |a_n|(\epsilon^2 r)^n((\epsilon r)^*)^n(\epsilon r)^n(\epsilon r)^* \le \epsilon^{-2}r^*R|a_n|(\epsilon r)^n
		\end{align*}
		and so $\sum_{n=1}^\infty na_n(\epsilon^2 r)^{n-1}$ converges absolutely in order by comparison to $\sum_{n=0}^\infty a_n(\epsilon r)^n$, which yields $\epsilon^2r\in\Omega_2$.
		
		On the other hand, if $r\in\Omega_2$, then similarly we find that $\sum_{n=1}^\infty na_n(\epsilon r)^{n-1}$ converges absolutely in order. Since
		\[
		|a_n|(\epsilon r)^n\le n|a_n|(\epsilon r)^{n-1}(\epsilon r)
		\]
		for all $n\ge 1$, it follows that $\sum_{n=0}^\infty a_n(\epsilon r)^n$ converges absolutely in order, hence $\epsilon r\in\Omega_1$. 
		
		Next, for any $r\in\Omega_2$ we have shown that $\epsilon r\le\rho_1$, and by letting $\epsilon\to 1$, we conclude that $r\le\rho_1$. Hence $\rho_2\le \rho_1$. Moreover, if $r\in\Omega_1$, we have that $\epsilon^2 r\le\rho_2$ which yields $r\le\rho_2$ and hence $\rho_1\le\rho_2$. Therefore, $\rho_1=\rho_2$.
	\end{proof}
	
	The idea in the following lemma is used to naturally extend the definition of $\overset{\circ}{\Delta}(c,r)$ where $r\in F^s$.
	
	\begin{lemma}\label{L:largest order open set}
		Let $G$ be a uniformly complete Archimedean complex $\Phi$-algebra, $r \in G_+$ be an invertible element, and $c\in G$. Then $\overset{\circ}{\Delta}(c,r)$ is the largest order open set in $\bar{\Delta}(c,r)$.
	\end{lemma}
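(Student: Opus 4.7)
The plan is to establish two facts: that $\overset{\circ}{\Delta}(c,r)$ is itself an order open subset of $\bar{\Delta}(c,r)$, and that every order open subset of $\bar{\Delta}(c,r)$ is contained in $\overset{\circ}{\Delta}(c,r)$. The inclusion $\overset{\circ}{\Delta}(c,r) \subseteq \bar{\Delta}(c,r)$ is immediate from the definitions, since $|z-c| \ll r$ forces $r - |z-c| \ge 0$. To verify order openness, given $z \in \overset{\circ}{\Delta}(c,r)$ I would set $s := r - |z-c|$ and check that any $w \in \overset{\circ}{\Delta}(z,\frac{1}{2}s)$ satisfies $r - |w-c| \ge s - |w-z| \ge \frac{1}{2}s$ by the triangle inequality, so $r - |w-c|$ dominates a positive invertible element and is itself invertible by the uniform-completeness fact used in the proof of Lemma~\ref{L:locally ord cts}.

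The core of the argument is the maximality claim. Given an order open $U \subseteq \bar{\Delta}(c,r)$ and $z \in U$, I would pick a positive invertible $s \in G_+$ with $\overset{\circ}{\Delta}(z,s) \subseteq U$. For each scalar $\lambda \in (0,1)$, the symmetric pair $w_{\pm} := z \pm \lambda s$ both lie in $\overset{\circ}{\Delta}(z,s)$, because $|w_{\pm} - z| = \lambda s$ and $s - \lambda s = (1-\lambda)s$ is positive invertible. Hence $|w_{\pm} - c|^2 \le r^2$. Writing $z - c = x + iy$ with $x,y$ in the real part of $G$ and using the identity $|a|^2 = a\bar{a}$ together with commutativity of the $\Phi$-algebra product, one expands
\[
|w_{\pm} - c|^2 = |z-c|^2 \pm 2\lambda s\, x + \lambda^2 s^2 \le r^2.
\]
Summing the two inequalities cancels the cross term and gives $|z-c|^2 + \lambda^2 s^2 \le r^2$, which factors as $(r - |z-c|)(r + |z-c|) \ge \lambda^2 s^2$. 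Since $r + |z-c| \ge r$ dominates the positive invertible $r$, it is itself positive invertible, and multiplying by its inverse produces
\[
r - |z-c| \ge \lambda^2 s^2 (r + |z-c|)^{-1},
\]
a positive invertible element. Invoking the dominance fact once more, $r - |z-c|$ is positive invertible, i.e., $z \in \overset{\circ}{\Delta}(c,r)$.

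The main obstacle I anticipate is how to upgrade the qualitative containment $\overset{\circ}{\Delta}(z,s) \subseteq \bar{\Delta}(c,r)$ into a quantitative lower bound on $r - |z-c|$, given that $z-c$ has no canonical \emph{direction} in the algebra (in particular $|z-c|$ need not be invertible, so one cannot simply push $z$ in a unit-modulus direction away from $c$). The resolution is to pass to squared moduli, so that the disk inequality $|w-c|^2 \le r^2$ expands algebraically, and to exploit the symmetry between $w_+$ and $w_-$: the direction-sensitive linear term $\pm 2\lambda s\, x$ cancels upon averaging, leaving a direction-free lower bound $\lambda^2 s^2$ on $r^2 - |z-c|^2$ which then promotes to invertibility of $r - |z-c|$ via the standard factorization.
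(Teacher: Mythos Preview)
Your argument is correct and takes a genuinely different route from the paper's. For the maximality step, the paper argues by contradiction: assuming $z\in\bar{\Delta}(c,r)\setminus\overset{\circ}{\Delta}(c,r)$ admits an order open neighborhood $\overset{\circ}{\Delta}(z,s)\subseteq\bar{\Delta}(c,r)$, it invokes the Kakutani representation theorem to identify the principal ideal generated by $r$ with $C_{\mathbb{C}}(K)$ for some compact Hausdorff $K$, locates a point $x\in K$ at which $|z-c|$ attains the value $r$, and then picks a complex phase $\lambda_0$ so that $|z-c+\tfrac{1}{2}\overline{\lambda_0}s|$ exceeds $r$ at $x$. Your approach instead stays entirely inside the $\Phi$-algebra: by perturbing symmetrically with $\pm\lambda s$ and passing to squared moduli, the direction-dependent cross term cancels and you extract the explicit lower bound $r-|z-c|\ge\lambda^{2}s^{2}(r+|z-c|)^{-1}$ directly. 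This is more elementary in that it avoids representation theory; the paper's route has the advantage of making the geometric picture (push $z$ radially outward from $c$) visible, at the price of importing $C(K)$. As a minor remark, any single value of $\lambda\in(0,1)$ already suffices in your argument, so the parametrization is not actually needed.
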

	
	\begin{proof}
		Let $z \in \overset{\circ}{\Delta}(c,r)$. Then $s:=\frac{1}{2}(r-|z-c|)$ is a positive invertible element and if $y \in \overset{\circ}{\Delta}(z,s)$, it follows that $r-|y-c|\ge r-|y-z|-|z-c|\ge r-s-|z-c|=\frac{1}{2}(r-|z-c|)$, which is positive and invertible by \cite[Theorem~11.1, Theorem~11.4]{dP}, hence $\overset{\circ}{\Delta}(z,s)\subseteq\overset{\circ}{\Delta}(c,r)$ and $\overset{\circ}{\Delta}(c,r)$ is an order open set. Suppose $z\in \bar{\Delta}(c,r)\setminus\overset{\circ}{\Delta}(c,r)$. Furthermore, suppose that there exist an invertible element $s \in G_+$ such that $\overset{\circ}{\Delta}(z,s) \subseteq \bar{\Delta}(c,r)$. It follows that $z+\frac{1}{2}\lambda s \in \overset{\circ}{\Delta}(z,s)$ for all $\lambda\in\mathbb{C}$ with $|\lambda|=1$, so that 
		\begin{align}\label{E:ineq1}
			|z-c+ {\textstyle\frac{1}{2}}\lambda s|\le r
		\end{align}
		for all $\lambda \in\mathbb{C}$ with $|\lambda|=1$. By the Kakutani representation theorem and then complexifying, there is a compact Hausdorff space $K$ such that the ideal generated by $r$ is lattice isomorphic to $C_\mathbb{C}(K)$, where $r$ corresponds to the constant function $\mathbf{1}$. If we identify $f,g \in C_\mathbb{C}(K)$ with $z-c$ and $s$, then there is $x \in K$ and $\lambda_0\in\mathbb{C}$ with $|\lambda_0|=1$ such that $\lambda_0f(x)= 1$. This now yields 
		\[
		|f+{\textstyle\frac{1}{2}}\overline{\lambda_0}g|(x)=|\overline{\lambda_0}(\lambda_0f+{\textstyle\frac{1}{2}}g)|(x)=|\lambda_0f+{\textstyle\frac{1}{2}}g|(x)=\lambda_0f(x)+{\textstyle\frac{1}{2}}g(x)>1, 
		\]
		which contradicts \eqref{E:ineq1}. Hence, $z$ is not an order interior point of $\bar{\Delta}(c,r)$ and $\overset{\circ}{\Delta}(c,r)$ is the largest order open set contained in $\bar{\Delta}(c,r)$.
	\end{proof}
	
	\begin{notation} 
		For $z,w\in F$ we use the notation $z\ll_w w$ when $w-z$ is a positive invertible element in $B_w$. Furthermore, for $y \in F^s$ and $x \in F$, by $x \ll y$ we mean that both $x \le y$ and $\mathbb{P}_{y_{\mathcal{F}}}(x) \ll_{y_{\mathcal{F}}} y_{\mathcal{F}}$. 
		In addition, if $y$ dominates a positive invertible element in $F$, the corresponding order disk $\overset{\circ}{\Delta}(c,y)$ will denote the largest order open set in $E$ that is contained in 
		\[
		\bar{\Delta}(c,y):=\{x \in E \colon |x-c|\le y\}=\{x \in E \colon \mathbb{P}_{y_{\mathcal{F}}}(|x-c|)\le y_{\mathcal{F}}\} + B_{y_\infty}.
		\]
		So, similarly to Lemma~\ref{L:largest order open set}, this set is defined by
		\[
		\overset{\circ}{\Delta}(c,y):=\{x\in E \colon \mathbb{P}_{y_{\mathcal{F}}}(|x-c|) \ll_{y_{\mathcal{F}}} y_{\mathcal{F}}\} + B_{y_\infty},
		\]
		and it is readily verified that this set is order open as well.
	\end{notation}

	If we want to think of a power series as a function, then we need to specify the domain, which necessarily must consist of all the $z \in E$ for which the power series converges in order.
	
	\begin{proposition}\label{P: domain power series}
		Let $\sum_{n=0}^\infty a_n(z-c)^n$ be a power series on $E$. Then the domain of the function $f(z):= \sum_{n=0}^\infty a_n(z-c)^n$ satisfies $\overset{\circ}{\Delta}(c,\rho) \subseteq \mathrm{dom}(f) \subseteq \bar{\Delta}(c,\rho)$. Furthermore, if $\rho$ dominates a positive invertible element, then $\overset{\circ}{\Delta}(c,\rho)$ is the largest order open set contained in $\mathrm{dom}(f)$.
	\end{proposition}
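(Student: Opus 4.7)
My plan is to prove the two inclusions separately and then deduce the maximality claim from the second inclusion. For the inclusion $\overset{\circ}{\Delta}(c,\rho) \subseteq \mathrm{dom}(f)$, let $z \in \overset{\circ}{\Delta}(c,\rho)$ and set $r := |z-c|$. Since $\overset{\circ}{\Delta}(c,\rho) \subseteq \bar{\Delta}(c,\rho)$ by the notation preceding the proposition, we have $r \leq \rho$ together with the band-strict inequality $\mathbb{P}_{\rho_{\mathcal{F}}}(r) \ll_{\rho_{\mathcal{F}}} \rho_{\mathcal{F}}$. I decompose $r = r_1 + r_2 + r_3$ according to Theorem~\ref{T:3parts}, with $r_i$ in the three disjoint bands $B_{\rho_{\mathcal{F}}}$, $B_{\rho_\infty}$, $B_\rho^d$ respectively, and show $r \in \Omega$ piecewise. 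The component $r_3$ vanishes since $r \leq \rho$ has no $B_\rho^d$-part. For $r_2$, Theorem~\ref{T:CHF}$(i)$ gives $B_{\rho_\infty} = B_L^d$, so $Lr_2 = 0$, whence by \eqref{E: limsup mult} and \cite[Theorem~3.6(i)]{RoeSch} we get $r_2 \in \Omega$. For $r_1$, using $\limsup|a_n|^{1/n}r_1 = L_{\mathcal{F}} r_1$ together with $(L_{\mathcal{F}})^\ast = \rho_{\mathcal{F}}$ from Theorem~\ref{T:CHF}$(iii)$, I verify that
\[
e - L_{\mathcal{F}} r_1 = (\rho_{\mathcal{F}})^\ast(\rho_{\mathcal{F}} - r_1) + \mathbb{P}_{\rho_{\mathcal{F}}}^d(e)
\]
is positive and invertible, exploiting that $\rho_{\mathcal{F}} - r_1$ is a positive invertible element of $B_{\rho_{\mathcal{F}}}$. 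Hence $L_{\mathcal{F}} r_1 \ll e$ and $r_1 \in \Omega$ by \cite[Theorem~3.6(i)]{RoeSch}. Pairwise disjointness from Theorem~\ref{T:3parts} then yields $r = r_1 \vee r_2 \in \Omega$ via \cite[Proposition~3.9(i)]{RoeSch}, and absolute order convergence of $\sum a_n(z-c)^n$ follows by comparison.

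For the reverse inclusion $\mathrm{dom}(f) \subseteq \bar{\Delta}(c,\rho)$, let $z \in \mathrm{dom}(f)$. Order convergence of the series forces $a_n(z-c)^n \to 0$, so the sequence $\bigl(|a_n||z-c|^n\bigr)_{n\ge 0}$ is order bounded by some $M \in E_+$. Taking $n$-th roots, applying \eqref{E: limsup mult}, and using $\limsup_n M^{1/n} \leq e$ yields $L|z-c| \leq e$. I then decompose $|z-c| = y_1 + y_2 + y_3$ into the three bands $B_{L_{\mathcal{F}}} \oplus B_{L_\infty} \oplus B_L^d = B_{\rho_{\mathcal{F}}} \oplus B_\rho^d \oplus B_{\rho_\infty}$ using Theorem~\ref{T:CHF}$(i),(ii)$. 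On $B_{L_{\mathcal{F}}}$, multiplying $L_{\mathcal{F}} y_1 \leq \mathbb{P}_{L_{\mathcal{F}}}(e)$ by $(L_{\mathcal{F}})^\ast = \rho_{\mathcal{F}}$ gives $y_1 \leq \rho_{\mathcal{F}}$. On $B_{L_\infty}$, if $y_2 > 0$, then using $ny_2 \leq L_\infty$ (from Lemma~\ref{L:largestelmt}) and the extended multiplication on $F^s$ forces $L_\infty y_2 \geq n y_2^2$ for every $n \in \mathbb{N}$, contradicting both $L_\infty y_2 \leq e$ and semiprimeness of $E$; hence $y_2 = 0$. Finally, $y_3 \leq \rho_\infty$ by Lemma~\ref{L:largestelmt}. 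Therefore $|z-c| \leq \rho_{\mathcal{F}} + \rho_\infty = \rho$, so $z \in \bar{\Delta}(c,\rho)$.

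The maximality is now immediate: when $\rho$ dominates a positive invertible element, the notation preceding the proposition defines $\overset{\circ}{\Delta}(c,\rho)$ as the largest order open set contained in $\bar{\Delta}(c,\rho)$, so any order open $U \subseteq \mathrm{dom}(f)$ lies in $\bar{\Delta}(c,\rho)$ by the second inclusion, and hence in $\overset{\circ}{\Delta}(c,\rho)$. The principal obstacle is the $r_1$ step in the first inclusion: translating the band-relative strict inequality $\mathbb{P}_{\rho_{\mathcal{F}}}(r) \ll_{\rho_{\mathcal{F}}} \rho_{\mathcal{F}}$ into the usable condition $L_{\mathcal{F}} r_1 \ll e$ hinges on the reciprocity $(L_{\mathcal{F}})^\ast = \rho_{\mathcal{F}}$ and on handling the $\mathbb{P}_{\rho_{\mathcal{F}}}^d(e)$ contribution so that the displayed sum is genuinely invertible in $F$ rather than merely in the band $B_{\rho_{\mathcal{F}}}$.
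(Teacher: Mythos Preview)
Your proof is correct and follows the same strategy as the paper: both inclusions are obtained from the Cauchy--Hadamard identifications of Theorem~\ref{T:CHF} combined with the root test \cite[Theorem~3.6]{RoeSch}, and maximality follows directly from the definition of $\overset{\circ}{\Delta}(c,\rho)$ as the largest order open subset of $\bar{\Delta}(c,\rho)$. The only differences are in packaging: for the first inclusion the paper computes $L|z-c|=L_{\mathcal{F}}\,\mathbb{P}_{\rho_{\mathcal{F}}}(|z-c|)\ll e$ in one stroke and invokes the root test directly, rather than showing $|z-c|\in\Omega$ band-by-band via \cite[Proposition~3.9(i)]{RoeSch}; and for the second inclusion the paper simply cites \cite[Theorem~3.6(ii)]{RoeSch} to obtain $L|z-c|\le e$, whereas you re-derive it from order boundedness of the terms and $\limsup_n M^{1/n}\le e$.
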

	
	\begin{proof}
		Let $z \in E$ be so that $|z-c| \ll \rho$. Then $\mathbb{P}_{\rho_{\mathcal{F}}}(|z-c|) \ll_{\rho_{\mathcal{F}}} \rho_{\mathcal{F}}$ and we can write $|z-c|=\mathbb{P}_{\rho_{\mathcal{F}}}(|z-c|)+\mathbb{P}_{\rho_{\infty}}(|z-c|)$. Let $L:=\limsup_{n\to\infty} |a_n|^{\frac{1}{n}}$. Note that $L$ is well-defined in $F^s$. It follows from Lemma~\ref{L:23} and \cite[Lemma~24$(iii)$]{AzouNasri}, which states that
		\[
		x(y\vee z)=(xy)\vee(xz)\qquad (0\leq x,y,z\in F^s),
		\]
		that $L|z-c|=\limsup_{n\to \infty}(|a_n|^{\frac{1}{n}}|z-c|)$. From Theorem~\ref{T:CHF} we obtain 
		\[
		\limsup_{n\to \infty}|a_n|^{\frac{1}{n}}|z-c|=L|z-c| = L\mathbb{P}_{\rho_{\mathcal{F}}}(|z-c|) = L_{\mathcal{F}}\mathbb{P}_{\rho_{\mathcal{F}}}(|z-c|) \ll e,
		\]
		so that $f(z)$ converges absolutely in order by \cite[Theorem~3.6(i)]{RoeSch}. Hence $z \in \mathrm{dom}(f)$.  
		If $z \in E$ is such that $\sum_{n=0}^\infty a_n(z-c)^n$ converges in order, then ${\textstyle\limsup_{n\to\infty}|a_n|^{\frac{1}{n}}|z-c| \le e}$ by \cite[Theorem~3.6(ii)]{RoeSch}. It follows that 
		that $L|z-c|=\limsup_{n\to \infty}(|a_n|^{\frac{1}{n}}|z-c|) \le e$. 
		
		Since $F$ is Archimedean and $L=L_{\mathcal{F}}+L_\infty$, we find that $L_\infty|z-c|=0$ by Lemma~\ref{L:muinfty}, so that $\mathbb{P}_{L_\infty}(|z-c|)=0$, and $L_{\mathcal{F}}|z-c|\le e$. By Theorem~\ref{T:CHF} and by Theorem~\ref{T:3parts}, it follows that $\mathbb{P}_{\rho_{\mathcal{F}}}(|z-c|)\le \rho_{\mathcal{F}}$, and 
		\begin{align*}
			|z-c|&=\mathbb{P}_{\rho_{\mathcal{F}}}(|z-c|)+\mathbb{P}_{\rho_{\infty}}(|z-c|)+\mathbb{P}_{\rho}^d(|z-c|)=\mathbb{P}_{\rho_{\mathcal{F}}}(|z-c|)+\mathbb{P}_{\rho_{\infty}}(|z-c|) \\&\le \rho_{\mathcal{F}} + \rho_\infty = \rho,
		\end{align*}
		as $\rho_\infty$ is the largest element in $(B_{\rho_\infty})^s$ by Lemma~\ref{L:largestelmt}. Hence $z \in \bar{\Delta}(c,\rho)$. 
		
		Suppose that $\rho$ dominates a positive invertible element. Since $\overset{\circ}{\Delta}(c,\rho)$ is the largest order open set contained in $\bar{\Delta}(c,\rho)$, it must therefore also be the largest order open set in $\mathrm{dom}(f)$ by an argument analogous to the proof of Lemma~\ref{L:largest order open set}.
	\end{proof}
	
	The remainder of this paper is devoted to proving that analytic functions are also holomorphic in our present setting.
	
	\begin{definition}
		Let $f \colon \mathrm{dom}(f) \to E$ and let $U \subseteq \mathrm{dom}(f)$ be an order open set. Then $f$ is said to be \emph{analytic} on $U$ if for every $c \in U$ there is a positive invertible element $r$ such that $\bar{\Delta}(c,r) \subseteq U$ and there exists a power series $\sum_{n=0}^\infty a_n(z-c)^n$ on $E$ that converges uniformly in order to $f(z)$ on $\bar{\Delta}(c,r)$. 
	\end{definition}
	
	In the next theorem we show when power series on universally complete complex vector lattices are order differentiable and consequently infinitely order differentiable. Part of our proof is an adaptation of a classical argument to the present setting.
	
	\begin{theorem}\label{T: derivative of power series 2}
		Let $f(z):=\sum_{n=0}^\infty a_n(z-c)^n$ and $g(z):=\sum_{n=1}^\infty n a_n(z-c)^{n-1}$ be functions defined by the respective power series with equal radius of convergence $\rho$ by Theorem~\ref{T:derivative of power series}. Then $f$ is order differentiable at $z_0$ if and only if $\rho$ dominates a positive invertible element and $z_0 \in \overset{\circ}{\Delta}(c,\rho)$.  In this case we have $f'(z_0) = g(z_0)$. 
	\end{theorem}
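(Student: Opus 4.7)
The plan is to establish both directions of the equivalence. For the forward direction (necessity), suppose $f$ is order differentiable at $z_0$. Then Definition~\ref{D:order derivative} furnishes a $D$-neighborhood $\overset{\circ}{\Delta}(z_0, s) \subseteq \mathrm{dom}(f)$ with $s \in E_+$ invertible. The points $z_0 \pm \tfrac{1}{2}s$ lie in this disk and hence in $\bar{\Delta}(c,\rho)$ by Proposition~\ref{P: domain power series}, so the triangle inequality gives $s \leq 2\rho$, identifying $\tfrac{1}{2}s$ as a positive invertible element dominated by $\rho$. The ``furthermore'' clause of Proposition~\ref{P: domain power series} then shows $\overset{\circ}{\Delta}(c,\rho)$ is the largest order open subset of $\mathrm{dom}(f)$, forcing $z_0 \in \overset{\circ}{\Delta}(z_0, s) \subseteq \overset{\circ}{\Delta}(c,\rho)$.

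For sufficiency, assume $\rho$ dominates a positive invertible element $u$ and $z_0 \in \overset{\circ}{\Delta}(c,\rho)$; set $w := z_0 - c$. Since $\bar{\mathbb{P}}_\rho^d(\rho) = 0$ and $u$ is a weak order unit with $u \leq \rho$, one deduces $\mathbb{P}_\rho^d(u) = 0$ and hence $B_\rho^d = \{0\}$. Consequently $|w| = \mathbb{P}_{\rho_\mathcal{F}}(|w|) + \mathbb{P}_{\rho_\infty}(|w|)$ and $\mathbb{P}_{\rho_\mathcal{F}}(|w|) \ll_{\rho_\mathcal{F}} \rho_\mathcal{F}$. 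I construct
\[
r \;:=\; |w| \,+\, \tfrac{1}{2}\bigl(\rho_\mathcal{F} - \mathbb{P}_{\rho_\mathcal{F}}(|w|)\bigr) \,+\, \mathbb{P}_{\rho_\infty}(e),
\]
which is a positive invertible element of $E$ for which $r - |w|$ is also positive invertible and $c + r \in \overset{\circ}{\Delta}(c,\rho)$. Applying Theorem~\ref{T:derivative of power series} twice, the twice-differentiated series $\sum_{n=2}^\infty n(n-1)a_n(z-c)^{n-2}$ also has radius of convergence $\rho$, so Proposition~\ref{P: domain power series} yields absolute order convergence at $c+r$. Hence $M := \sum_{n=2}^\infty n(n-1)|a_n|\,r^{n-2}$ exists in $E_+$.

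Setting $s := r - |w|$, the disk $\overset{\circ}{\Delta}(z_0, s)$ serves as the required $D$-neighborhood. For $h_\alpha \to 0$ with $z_0 + h_\alpha \in \overset{\circ}{\Delta}(z_0, s)$, the triangle inequality yields $|w| + |h_\alpha| \leq r$. The commutative $\Phi$-algebra binomial expansion together with the numerical inequality $\binom{n}{k+2} \leq \binom{n}{2}\binom{n-2}{k}$ produces the estimate
\[
\bigl|(w + h_\alpha)^n - w^n - nh_\alpha w^{n-1}\bigr| \;\leq\; \tfrac{n(n-1)}{2}\,|h_\alpha|^2\, r^{n-2}.
\]
Termwise subtraction of the order convergent series defining $f(z_0+h_\alpha)$, $f(z_0)$, and $h_\alpha g(z_0)$ (justified by order continuity of multiplication by a fixed element and by absolute order convergence), combined with this bound, gives $|f(z_0+h_\alpha) - f(z_0) - h_\alpha g(z_0)| \leq \tfrac{1}{2}|h_\alpha|^2 M$. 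Choosing $q_\beta \downarrow 0$ with $|h_\alpha| \leq q_\beta$ eventually, the right side is dominated by $|h_\alpha|\cdot \tfrac{1}{2}q_\beta M$, and $\tfrac{1}{2}q_\beta M \downarrow 0$ since $M \in E_+$ is fixed. This matches Definition~\ref{D:order derivative} and gives $f'(z_0) = g(z_0)$.

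The main obstacle is the construction of the auxiliary element $r$: it must simultaneously be a positive invertible element of $E$ (so that $\overset{\circ}{\Delta}(z_0, r-|w|)$ is a legitimate order open $D$-neighborhood) and lie strictly inside the generalized disk $\overset{\circ}{\Delta}(0,\rho)$ in the sup-completion (so that $c+r$ falls inside the convergence region of the twice-differentiated series). The band decomposition $F = B_{\rho_\mathcal{F}} \oplus B_{\rho_\infty}$, available because $B_\rho^d$ collapses, is essential; in particular, the $B_{\rho_\infty}$-component of $r$ is chosen as the identity $\mathbb{P}_{\rho_\infty}(e)$ to secure weak-order-unit behavior in $B_{\rho_\infty}$, avoiding any attempt to dominate $\mathbb{P}_{\rho_\infty}(|w|)$ by a multiple of the unavailable $\rho_\infty$.
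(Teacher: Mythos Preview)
Your proof is correct and follows essentially the same strategy as the paper's. Both directions of the equivalence are handled identically in spirit: the forward implication uses that a $D$-neighborhood must sit inside $\bar{\Delta}(c,\rho)$ via Proposition~\ref{P: domain power series}, and the reverse implication reduces to the bound
\[
|f(z_0+h_\alpha)-f(z_0)-h_\alpha g(z_0)|\le \tfrac{1}{2}|h_\alpha|^2\sum_{n=2}^\infty n(n-1)|a_n|r^{n-2},
\]
with convergence of the right-hand series secured by applying Theorem~\ref{T:derivative of power series} twice.

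Two minor technical differences are worth noting. First, the paper simply asserts the existence of a positive invertible $r$ with $r\ll\rho$ and $z_0\in\overset{\circ}{\Delta}(c,r)$, whereas you construct $r$ explicitly via the band decomposition $F=B_{\rho_{\mathcal F}}\oplus B_{\rho_\infty}$; your construction in fact supplies the justification the paper leaves implicit. Second, the paper obtains the quadratic remainder estimate by introducing auxiliary functions $f_n$ and telescoping the identity $(z-c)^n-(z_0-c)^n$, while you reach the same bound directly from the binomial expansion together with $\binom{n}{k+2}\le\binom{n}{2}\binom{n-2}{k}$. Both routes are standard and yield the identical estimate, so neither buys anything the other does not.
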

	
	\begin{proof} 
		Suppose $f$ is order differentiable at $z_0$. Then there is a positive invertible element $s$ such that $\overset{\circ}{\Delta}(z_0,s) \subseteq \mathrm{dom}(f)$. But then it follows from 
		\[
		{\textstyle \frac{1}{2}} s=|{\textstyle\frac{1}{2}}s| \le |z_0-c+{\textstyle\frac{1}{2}} s|+|z_0-c| \le 2\rho
		\]
		that $\rho$ dominates a positive invertible element. By Proposition~\ref{P: domain power series} we have that $\overset{\circ}{\Delta}(z_0,s) \subseteq \overset{\circ}{\Delta}(c,\rho)$, so that $z_0 \in \overset{\circ}{\Delta}(c,\rho)$. 
		
		Suppose that $\rho$ dominates a positive invertible element and $z_0 \in \overset{\circ}{\Delta}(c,\rho)$. Let $r$ be a positive invertible element such that $r \ll \rho$ and $z_0 \in \overset{\circ}{\Delta}(c,r)$. Let $h_\alpha \to 0$ be such that $z_0+h_\alpha \in \overset{\circ}{\Delta}(c,r)$, then it follows from \cite[Theorem~3.6(i)]{RoeSch} and Theorem~\ref{T:CHF} that $f(r+c)$ converges absolutely in order. Next for $n\ge 1$ define
		\[
		f_n(z):=\sum_{m=0}^\infty a_m\sum_{j=0}^{n-1}(z-c)^{n-1-j}(z_0-c)^j
		\]
		on $\overset{\circ}{\Delta}(c,r)$. Note that $f_n(z)$ is well defined since 
		\[
		\sum_{j=0}^{n-1}|z-c|^{n-1-j}|z_0-c|^j\le nr^{n-1}, 
		\]
		and $g(r+c)$ converges absolutely in order since $\rho_g=\rho_{f}$ by Theorem~\ref{T:derivative of power series}. Furthermore, we have that $f_n(z_0)=g(z_0)$ and as 
		\begin{align}\label{Eq: sum formula}
			\sum_{j=0}^{n-1}(z-c)^{n-j}(z_0-c)^j-\sum_{j=0}^{n-1}(z-c)^{n-j-1}(z_0-c)^{j+1}=(z-c)^n-(z_0-c)^n, 
		\end{align}
		we have
		\begin{align*}
			(z-z_0)f_n(z)&=(z-c)f_n(z)-(z_0-c)f_n(z)\\&=\sum_{m=0}^\infty a_m\sum_{j=0}^{n-1}(z-c)^{n-j}(z_0-c)^j-\sum_{m=0}^{\infty}a_m\sum_{j=0}^{n-1}(z-c)^{n-j-1}(z_0-c)^{j+1}\\&=f(z)-f(z_0).
		\end{align*}
		Using the equation in \eqref{Eq: sum formula} again, we find that
		\begin{align*}
			f_n(z)-g(z_0)&=\sum_{m=0}^\infty a_m\sum_{j=0}^{n-1}(z-c)^{n-1-j}(z_0-c)^j-\sum_{m=0}^\infty a_m\sum_{j=0}^{n-1}(z_0-c)^{n-1-j}(z_0-c)^j\\&=\sum_{m=0}^\infty a_m\sum_{j=0}^{n-1}(z_0-c)^j\Bigl((z-c)^{n-1-j}-(z_0-c)^{n-1-j}\Bigr)\\&=\sum_{m=0}^\infty a_m\sum_{j=0}^{n-2}(z_0-c)^j\Bigl((z-c)^{n-1-j}-(z_0-c)^{n-1-j}\Bigr).
		\end{align*}
		Since 
		\[
		\Bigl((z-c)-(z_0-c)\Bigr)\sum_{k=0}^{n-2-j}(z-c)^{n-2-j-k}(z_0-c)^{k}=(z-c)^{n-1-j}-(z_0-c)^{n-1-j},
		\]
		we find that
		\[
		f_n(z)-g(z_0)=(z-z_0)\sum_{m=0}^{\infty}a_m\sum_{j=0}^{n-2}\sum_{k=0}^{\ n-2-j}(z-c)^{n-2-j-k}(z_0-c)^{k+j}
		\]
		and 
		\begin{align*}
			\sum_{j=0}^{n-2}\sum_{k=0}^{\ n-2-j}|z-c|^{n-2-j-k}|z_0-c|^{k+j}&\le\sum_{j=0}^{n-2}\sum_{k=0}^{n-2-j}r^{n-2}=\sum_{j=0}^{n-2}(n-1-j)r^{n-2}\\&={\textstyle \frac{1}{2}}n(n-1)r^{n-2}.
		\end{align*}
		Since $h_\alpha\to 0$, there is a net $q_\beta\downarrow 0$ such that for all $\beta$ there is an $\alpha_0$ such that $|h_\alpha|\le q_\beta$ whenever $\alpha\ge \alpha_0$. Then for $\beta$ and such an $\alpha_0$, it follows by combining all of the above established identities that
		\begin{align*}
			|f(z_0+h_\alpha)-f(z_0)-h_\alpha g(z_0)|&=|h_\alpha f_n(z_0+h_\alpha)-h_\alpha g(z_0)|=|h_\alpha||f_n(z_0+h_\alpha)-g(z_0)|\\&\le
			{\textstyle \frac{1}{2}}|h_\alpha|q_\beta\sum_{k=0}^\infty |a_k| k(k-1)r^{k-2}
		\end{align*}
		whenever $\alpha \ge \alpha_0$. Note that this series converges by applying Theorem~\ref{T:derivative of power series} twice. Thus $f$ is order differentiable at $z_0$ where the derivative is $g(z_0)$. 
	\end{proof}
	
	We conclude this paper with the following result, which is a consequence of Theorem~\ref{T: derivative of power series 2}.
	
	\begin{theorem}
		Let $f\colon\mathrm{dom}(f)\to E$, and suppose that $U \subseteq \mathrm{dom}(f)$ is order open. If $f$ is analytic on $U$, then $f$ is holomorphic on $U$.
	\end{theorem}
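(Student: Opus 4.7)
The plan is to fix an arbitrary $c \in U$ and reduce everything to Theorem~\ref{T: derivative of power series 2}. Analyticity on $U$ produces a positive invertible $r \in E_+$ with $\bar{\Delta}(c,r) \subseteq U$ and a power series $\sum_{n=0}^\infty a_n(z-c)^n$ converging uniformly in order to $f(z)$ on $\bar{\Delta}(c,r)$. Write $p$ for the function determined by this power series on its natural domain $\mathrm{dom}(p)$.

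Since $r \in \Omega$ (the spectrum of convergence of $\sum_{n=0}^\infty a_n(z-c)^n$), the radius of convergence $\rho$ satisfies $\rho \ge r$, so $\rho$ dominates the positive invertible element $r$. Moreover $c \in \overset{\circ}{\Delta}(c,\rho)$ since $|c-c|=0 \ll \rho$. Theorem~\ref{T: derivative of power series 2} therefore yields that $p$ is order differentiable at $c$, with derivative $p'(c)=\sum_{n=1}^\infty n a_n(c-c)^{n-1}=a_1$, together with an associated $D$-neighborhood $V_c=\overset{\circ}{\Delta}(c,r')$ for some positive invertible $r' \in E_+$.

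It remains to transfer the derivative from $p$ to $f$. Set $U_c := \overset{\circ}{\Delta}(c, r \wedge r')$; the element $r \wedge r'$ is positive and invertible by \cite[Theorem~142.2(iii)]{Zaanen2}, so $U_c$ is an order open neighborhood of $c$. By construction $U_c \subseteq V_c$, while also $U_c \subseteq \overset{\circ}{\Delta}(c,r) \subseteq \bar{\Delta}(c,r) \subseteq U \subseteq \mathrm{dom}(f)$. Because $f$ coincides with $p$ on $\bar{\Delta}(c,r)$ and hence on $U_c$, any net $h_\alpha \to 0$ with $c+h_\alpha \in U_c$ automatically satisfies $c+h_\alpha \in V_c$, and the order differentiability inequality for $p$ at $c$ (supplied by the $D$-neighborhood $V_c$) immediately translates into the corresponding inequality for $f$ at $c$ with the same candidate derivative. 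Hence $f$ is order differentiable at $c$ with $f'(c)=p'(c)=a_1$. Since $c \in U$ was arbitrary, $f$ is holomorphic on $U$.

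The only real subtlety is the last step. The natural domain $\mathrm{dom}(p)$ of the power series may be strictly larger than $\bar{\Delta}(c,r)$, which is the set on which $f$ is guaranteed to agree with $p$, so one cannot simply reuse $V_c$ as a $D$-neighborhood for $f$. Shrinking to $U_c = \overset{\circ}{\Delta}(c, r \wedge r')$ neatly resolves this by simultaneously landing inside $V_c$ and inside the region where $f$ and $p$ coincide.
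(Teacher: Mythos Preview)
Your proof is correct and follows essentially the same route as the paper: both reduce to Theorem~\ref{T: derivative of power series 2} after observing that $r\in\Omega$ forces $\rho\ge r$. Your additional step of shrinking to $U_c=\overset{\circ}{\Delta}(c,r\wedge r')$ to transfer the derivative from the power series function $p$ to $f$ is a careful detail the paper's proof leaves implicit (it simply identifies $f$ with the power series on $\bar{\Delta}(c,r)$ and applies Theorem~\ref{T: derivative of power series 2} directly), but this does not constitute a different approach.
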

	
	\begin{proof}
		Let $c \in U$. Then there is a positive invertible element $r\in E$ and a power series $\sum_{k=1}^\infty a_k(z-c)^k$ on $E$ such that $\sum_{k=1}^\infty a_k(z-c)^k$ converges uniformly to $f(z)$ on $\bar{\Delta}(c,r) \subseteq U$. It follows that $r \le \rho$, and by Theorem~\ref{T: derivative of power series 2}, the function $f$ is order differentiable at $c$. Hence $f$ is holomorphic on $U$.
		
	\end{proof}
	
	\bibliography{holomorphic}

\providecommand{\bysame}{\leavevmode\hbox to3em{\hrulefill}\thinspace}
\providecommand{\MR}{\relax\ifhmode\unskip\space\fi MR }
\providecommand{\MRhref}[2]{%
  \href{http://www.ams.org/mathscinet-getitem?mr=#1}{#2}
}
\providecommand{\href}[2]{#2}
\begin{thebibliography}{10}

\bibitem{AlipBurk}
C.~D. Aliprantis and O.~Burkinshaw, \emph{Positive operators}, Springer,
  Dordrecht, 2006, Reprint of the 1985 original.

\bibitem{Azouzi}
Y.~Azouzi, \emph{Completeness for vector lattices}, J. Math. Anal. Appl.
  \textbf{472} (2019), no.~1, 216--230.

\bibitem{AzouNasri}
Y.~Azouzi and Y.~Nasri, \emph{The sup-completion of a {D}edekind complete
  vector lattice}, J. Math. Anal. Appl. \textbf{506} (2022), no.~2, Paper No.
  125651, 20.

\bibitem{AzBouBus}
Y.~Azzouzi, K.~Boulabiar, and G.~Buskes, \emph{The de {S}chipper formula and
  squares of {R}iesz spaces}, Indag. Math. (N.S.) \textbf{17} (2006), no.~4,
  479--496.

\bibitem{BeuHuij}
F.~Beukers and C.~B. Huijsmans, \emph{Calculus in {$f$}-algebras}, J. Austral.
  Math. Soc. Ser. A \textbf{37} (1984), no.~1, 110--116.

\bibitem{BusSch}
G.~Buskes and C.~Schwanke, \emph{Vector lattices and {$f$}-algebras: the
  classical inequalities}, Banach J. Math. Anal. \textbf{12} (2018), no.~1,
  191--205.

\bibitem{dP}
B.~de~Pagter, \emph{$f$-algebras and {O}rthomorphisms, \emph{{P}h{D}.
  {T}hesis}}, Leiden, 1981.

\bibitem{deSchipper}
W.~J. de~Schipper, \emph{A note on the modulus of an order bounded linear
  operator between complex vector lattices}, Nederl. Akad. Wetensch. Proc. Ser.
  A {\bf 76} (1973), 355--367.

\bibitem{Donner}
K.~Donner, \emph{Extension of positive operators and {K}orovkin theorems},
  Lecture Notes in Mathematics, vol. 904, Springer-Verlag, Berlin-New York,
  1982.

\bibitem{Zaanen1}
W.~A.~J. Luxemburg and A.~C. Zaanen, \emph{Riesz spaces. {V}ol. {I}},
  North-Holland Mathematical Library, North-Holland Publishing Co.,
  Amsterdam-London; American Elsevier Publishing Co., New York, 1971.

\bibitem{Mujica}
J.~Mujica, \emph{Complex analysis in {B}anach spaces}, North-Holland
  Mathematics Studies, vol. 120, North-Holland Publishing Co., Amsterdam, 1986,
  Holomorphic functions and domains of holomorphy in finite and infinite
  dimensions, Notas de Matem\'{a}tica [Mathematical Notes], 107.

\bibitem{RoeSch}
M.~Roelands and C.~Schwanke, \emph{Series and power series on universally
  complete complex vector lattices}, J. Math. Anal. Appl. \textbf{473} (2019),
  no.~2, 680--694.

\bibitem{Zaanen2}
A.~C. Zaanen, \emph{Riesz spaces. {II}}, North-Holland Mathematical Library,
  vol.~30, North-Holland Publishing Co., Amsterdam, 1983.

\bibitem{littleZaanen}
\bysame, \emph{Introduction to operator theory in {R}iesz spaces},
  Springer-Verlag, Berlin, 1997.

\end{thebibliography}
	\bibliographystyle{amsplain}
	
\end{document}